\newtheorem{theorem}{Theorem}[section]
\newtheorem{proposition}[theorem]{Proposition}
\newtheorem{lemma}[theorem]{Lemma}
\newtheorem{remark}[theorem]{Remark}
\numberwithin{equation}{section}
\newcommand \Kcal {\mathcal K}
\newcommand \Fcal{\mathcal{F}}
\newcommand \Hcal {\mathcal H}
\newcommand \Lcal {\mathcal L}
\newcommand \Ecal {\mathcal E}
\newcommand \delb {\bar {\del}}
\newcommand \phit {\tilde{\phi}}
\newcommand \m{\text{m}}
\newcommand \del \partial
\newcommand \delu {\uline{\del}}
\newcommand \Tu {\uline{T}}
\newcommand \Psiu{\uline{\Psi}}
\newcommand \Phiu{\uline{\Phi}}
\newcommand \Ec {E_2}
\newcommand \RR{\mathbb{R}}
\newcommand {\vep}{\varepsilon}
\newcommand {\dels}{\slashed{\del}}
\def\hlinew#1{%
  \noalign{\ifnum0=`}\fi\hrule \@height #1 \futurelet
   \reserved@a\@xhline}
\title{Global solutions of wave-Klein-Gordon system in two spatial dimensions with strong couplings in divergence form \footnote{The present work belongs to a research project ``Global stability of quasilinear wave-Klein-Gordon system in $2 + 1$ space-time dimension'' (11601414), supported by NSFC.}}
\author{Senhao Duan \& Yue Ma \footnote{School of Mathematics and Statistics, Xi'an Jiaotong University, Xi'an, Shaanxi 710049, P.R. China.\ E-mail: yuemath@xjtu.edu.cn}}
\begin{document}

\maketitle
\begin{abstract}
In this paper we established the global well-posedness theorem for a special type of wave-Klein-Gordon system that have the strong coupling terms in divergence form on the right hand side of its wave equation. We cope with the problem by constructing an auxiliary system with the shifted primitives of the original unknowns. The result is then applied directly on Klein-Gordon-Zakharov system in $2+1$ space-time with general  small-localized-regular initial data. In the end of this paper, we also give a preliminary answer to the question of global stability of a class of totally geodesic the wave maps in 2+1 dimensional case.
\end{abstract}
\section{Introduction}
\subsection{Objective and main result}
This article belongs to a research project in which we attempt to understand the effects of different quadratic terms coupled in diagonalized wave-Klein-Gordon system in $2+1$ dimensional space-time. In this article, we are interested in a special type of wave-Klein-Gordon system represented by the following two systems:
\begin{subequations}\label{eq1-main}	
\begin{equation}\label{eq1a-main}
\aligned
&\Box u = A^{\alpha}\del_{\alpha}(v^2),
\\
&\Box v + c^2v = B^{\alpha}v\del_{\alpha}u;
\endaligned
\end{equation} 	
\begin{equation}\label{eq1b-main}
\aligned
&\Box u = A^{\alpha\beta}\del_{\alpha}\del_{\beta}(v^2),
\\
&\Box v + c^2v = Buv.
\endaligned
\end{equation}

\end{subequations}
It can be noticed that on the right-hand-side of the wave equations, is the strong coupling terms term introduced in \cite{M-2020-strong}. We will establish global existence results for these systems with small localized regular initial data, more precisely,
\begin{theorem}\label{thm-main}
	Consider the Cauchy problems associated to \eqref{eq1-main} with initial data posed on $\{t=2\}$ and compactly supported in $\{|x|<1\}$:
	\begin{align*}
	&v(2,x) = v_0(x),\quad \del_tv(2,x) = v_1(x)
	\\
	&u(2,x) = u_0(x),\quad \del_tu(2,x) = u_1(x).
	\end{align*}
	There exists an integer $N\geq 9$ and a positive constant $\vep_0>0$ determined by the system and $N$, such that for all $0\leq \vep\leq \vep_0$, if
	\begin{equation}
	\|u_0\|_{H^{N+1}} + \|v_0\|_{H^{N+1}} + \|u_1\|_{L^2(H^N)} + \|v_1\|_{H^N}\leq \vep,
	\end{equation}
	then the local-in-time solution of \eqref{eq1-main} associated with such initial data extends to time infinity.
\end{theorem}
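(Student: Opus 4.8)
\medskip
\noindent\emph{Proof strategy.}\; The plan is to run a bootstrap argument on the \emph{hyperboloidal foliation} of the interior of the light cone. Since the data are posed on $\{t=2\}$ and supported in $\{|x|<1\}$, finite speed of propagation confines the local solution to $\{|x|\le t-1\}$; after a standard local-existence step the solution is known on a small slab, and its restriction to the initial hyperboloid $\mathcal H_2=\{t^2-|x|^2=4,\ t>0\}$ serves as initial data for the hyperboloidal scheme on the region foliated by $\mathcal H_s=\{t^2-|x|^2=s^2\}$, $s\ge2$. On each $\mathcal H_s$ we propagate a hierarchy of hyperboloidal energies for $u$ and for $v$ (the Klein--Gordon energies controlling in addition the $L^2$-norm of $c\,(s/t)\,v$ on $\mathcal H_s$), applied to the images of the unknowns under products of at most $N$ Lorentz boosts $L_a=x_a\partial_t+t\partial_a$ and spatial translations.

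The key device is to replace \eqref{eq1-main} by an equivalent \emph{auxiliary system} with better-structured nonlinearities. The obstruction in \eqref{eq1a-main} is the Klein--Gordon source $B^\alpha v\partial_\alpha u$: in $2+1$ dimensions $\partial u$ decays only at the borderline, non-integrable rate, so the Klein--Gordon energy cannot close directly at a fixed order. To cure this I exploit the divergence structure on the wave side: inserting the Klein--Gordon equation gives $c^2v^2=B^\beta v^2\partial_\beta u-v\Box v$, and $v\Box v=\tfrac12\Box(v^2)-\partial_\gamma v\,\partial^\gamma v$; since $A^\alpha$ is constant and $\Box$ commutes with $\partial_\alpha$, this yields
\begin{equation*}
\Box\Big(u+\tfrac{1}{2c^2}A^\alpha\partial_\alpha(v^2)\Big)=\tfrac{1}{c^2}A^\alpha\partial_\alpha\big(B^\beta v^2\partial_\beta u+\partial_\gamma v\,\partial^\gamma v\big),
\end{equation*}
whose right-hand side is cubic or a derivative of the (null-type) quadratic $\partial_\gamma v\,\partial^\gamma v$. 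The \emph{shifted primitive} is then the new wave unknown $\tilde u:=u+\tfrac{1}{2c^2}A^\alpha\partial_\alpha(v^2)$ --- and, more generally, auxiliary wave fields $\mathsf u^\alpha$ with $\Box\mathsf u^\alpha=A^\alpha v^2$ whose Cauchy data are arranged so that $u=\partial_\alpha\mathsf u^\alpha$ \emph{exactly}, with no spurious homogeneous component; correspondingly $\partial_\alpha u$ in the Klein--Gordon equation is rewritten through $\partial\tilde u$ and $\partial^2(v^2)$, the latter a harmless fast-decaying interaction of Klein--Gordon derivatives. For \eqref{eq1b-main} the same manipulation, now using $v^2=c^{-2}(Buv^2-v\Box v)$, trades the second-order operator $A^{\alpha\beta}\partial_\alpha\partial_\beta$ acting on $v^2$ against one factor of $\Box$ plus cubic/null contributions, after which the Klein--Gordon source is $Bv\tilde u$ up to a fast-decaying cubic term.

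With the auxiliary system fixed the bootstrap is the (by now classical) two-tier scheme. For the \emph{energy estimates} one assumes $\mathcal E_N(s,\cdot)^{1/2}\le C_1\vep\,s^{\delta}$ at the top order and an almost-bounded lower-order energy $\mathcal E_{N'}(s,\cdot)^{1/2}\le C_1\vep\,s^{C_1\vep}$ for some $N'<N$, applies the admissible vector fields to the equations, estimates the commutator and nonlinear contributions via the standard hyperboloidal calculus, and closes the estimate with the hyperboloidal energy inequality --- the crucial point being that every post-reduction nonlinearity is integrable in $ds/s$ against the relevant energy. For the \emph{decay estimates} one reads off the pointwise behaviour of $u$, $v$ and their good derivatives from the energy bounds and the Klainerman--Sobolev inequality on $\mathcal H_s$, and obtains the sharpened Klein--Gordon decay by integrating the reduced Klein--Gordon equation as an ordinary differential equation along the rays $\{x/t=\mathrm{const}\}$. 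Feeding these estimates back improves $C_1$; combined with the local theory and a continuity argument, the solution extends to $s\to\infty$, hence to all $t$.

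The principal obstacle is exactly this borderline $2+1$-dimensional decay of the Klein--Gordon field in interaction with the slowly decaying wave derivative $\partial u$, and defeating it is the whole purpose of the shifted-primitive reduction. The crux is therefore twofold: (a) the reduction must be \emph{algebraically exact}, so the Cauchy data of all auxiliary unknowns $\tilde u,\mathsf u^\alpha$ have to be computed from $(u_0,u_1,v_0,v_1)$ through the equations in such a way that no free-wave remainder is introduced and the auxiliary energies are $O(\vep)$ at $s=2$; and (b) after the reduction one must check that each nonlinear term genuinely carries an extra factor $s^{-1}$, an extra hyperboloidal weight $s/t$, or a null structure, so that all the $ds/s$-integrals in the energy estimates converge. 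The residual difficulties --- the logarithmic losses intrinsic to two-dimensional wave decay, absorbed by the $s^{\delta}$-hierarchy, and the bookkeeping of the terms produced by commuting the auxiliary system with $N$ vector fields --- are technical rather than conceptual.
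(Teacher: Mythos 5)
Your overall strategy (hyperboloidal bootstrap after replacing $u$ by wave primitives so that the Klein--Gordon equation sees only a Hessian of a wave unknown) is indeed the paper's strategy, but your step (a) contains a genuine gap that is precisely the central difficulty the paper is built to overcome. You claim the Cauchy data of the auxiliary fields $U^\alpha$ with $\Box U^\alpha = A^\alpha v^2$ can be arranged so that $u=\del_\alpha U^\alpha$ \emph{exactly}, ``with no spurious homogeneous component'' and with $O(\vep)$ auxiliary energies. Writing out the constraints at $t=2$, this requires in particular $\del_tU^0+\del_aU^a=u_0$ and $\Delta U^0+A^0v_0^2+\del_t\del_aU^a=u_1$, i.e.\ you must represent the compactly supported function $u_1-A^0v_0^2$ (resp.\ $u_1-2A^{00}v_0v_1$ for \eqref{eq1b-main}) in the form $\del_a G^a+\Delta h$ with data of size $\vep$ in the energy space. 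In $\RR^2$ this is possible with decaying data only under a moment-type condition (zero mean, or more generally the hypothesis ``$n=\Delta n^\Delta$ with $n^\Delta$ compactly supported'' of Dong's earlier work cited in the introduction); for general data the elliptic solve produces a logarithmically growing potential whose gradient fails to be in $L^2(\RR^2)$, so the auxiliary energies are not $O(\vep)$ and the compact-support/hyperboloidal framework breaks. The theorem you are proving is exactly the statement that this restriction can be removed, so the exactness claim cannot be assumed. The paper's resolution is different: it takes the primitive $w$ with \emph{zero} data, $\Box w=v^2$, and writes $u=w_0+A^\alpha\del_\alpha w$ where $w_0$ is a free linear wave carrying the original data of $u$ (Lemmas \ref{lem1-05-10-2020}--\ref{lem1-06-10-2020}); the price is the reappearance of the borderline couplings $vw_0$, $v\del w_0$ in the Klein--Gordon equation, which are then handled by allowing logarithmic growth of the lower-order Klein--Gordon energies, by the variable-coefficient $L^\infty$--$L^\infty$ estimate along rays (Proposition \ref{lem1'-01-09-2020}) to recover the sharp decay of $v$, and by the integration-along-hyperbolas estimate (Proposition \ref{prpo2 wave-sharp}) to recover $|\del\del w|\lesssim s^{-1}$.

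Your fallback reduction, the normal form $\tilde u=u+\tfrac1{2c^2}A^\alpha\del_\alpha(v^2)$, does not repair this: it improves the wave source but leaves the Klein--Gordon source $B^\alpha v\,\del_\alpha\tilde u$ with the \emph{gradient} (not the Hessian) of a genuinely nonlinear wave unknown, whose decay from the standard energy is only $s^{-1+\delta}$, exactly the non-integrable rate identified in \eqref{eq3-04-10-2020}; a direct energy estimate then yields growth like $s^{2\delta}$ at top order (or exponential-in-$s^\delta$ via Gronwall), which does not close against the bootstrap, and you give no mechanism (conformal energy for $\tilde u$ does not help, since $s\|\Box\tilde u\|_{L^2}$ is not integrable). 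So the assertion in your scheme that ``every post-reduction nonlinearity is integrable in $ds/s$'' is unverified and, for this term, false as stated. To make the argument work you either need the paper's $w_0$-shift together with the two sharp-decay propositions, or an additional idea replacing them.
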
 

The research on \eqref{eq1a-main} is motivated by a stability problem of a type of totally geodesic wave map. In \cite{Ab-2019} the following system was formulated:
\begin{equation}\label{eq1-wave-map}
\begin{aligned}
&-\Box \phi^1 = -2\sum_{k=2}^n\phi^k \del_1\phi^k + \text{h.o.t}
\\
&-\Box \phi^k - \phi^k = 2\phi^k \del_1u + \text{h.o.t.}, \quad k=2,\cdots, n
\end{aligned}
\end{equation}
where $u$ and $\phi^k$ are scalar functions defined in $\RR^{2+1}$. Relied on this formulation, a global stability result on wave map in $3+1$ and higher dimension was established in \cite{Ab-2019}. The cases in lower dimension was suggested to be open problems therein. In this article we will give a preliminary answer to the $2+1$ dimensional case (Theorem \ref{thm-wave-map}). In Section \ref{sec-conclusion-wave-map} we sketch the geometric background of \eqref{eq1-wave-map}. Detailed discussions on the formulation of \eqref{eq1-wave-map} can be found in \cite{Ab-2019} and for general review on wave maps,  one may read \cite{SS98} and \cite{Kri07}. 

The research on \eqref{eq1b-main} is motivated by the global stability problem of Klein-Gordon-Zakharov system:
\begin{equation}\label{eq1-Zakharov}
\aligned
& \Box E^a + E^a = -nE^a,\quad a = 1,2,
\\
& \Box n = \Delta \big(|E^1|^2 + |E^2|^2\big),
\endaligned
\end{equation}
where $n, E^a$ are scalar functions. The Zakharov equation was introduced in \cite{Zakharov-1972}. It describes a type of oscillation of a plasma. The Klein-Gordon-Zakharov system is a typical wave-Klein-Gordon system. The global stability result in $3+1$ space-time was established in \cite{Ozawa-1995} with Fourier-analytic method and latter in \cite{Tsutaya-1996} via vector field method. This result is then revisited and improved in many context. The main challenge of regarding wave-Klein-Gordon system comes form the lack of scaling invariance of the Klein-Gordon equation. See \cite{LM1} for a detailed explanation. 

Recently, S. Dong \cite{Dong-2020-2} established the global stability result in $2+1$ space-time with a special type of initial data. More precisely, Dong's method shows that, if there exists a compactly supported function $n^{\Delta}$ such that $\Delta n = n^{\Delta}$ on the initial slice, then with suitable assumptions on the regularity and smallness of the initial, the associated local-in-time solution extends to time infinity.  His method is based on hyperboloidal foliation combined with a weighted energy estimate (called ``ghost weight'').

In this article, as we have showed in the statement of Theorem \ref{thm-main}, we managed to establish a global stability result for general initial data in the small-localized-regular regime.

\subsection{Main difficulties and strategy of proof}
As explained in \cite{M-2020-strong}, in $\RR^{2+1}$, the main difficulty concerning the strong coupling terms, i.e., pure Klein-Gordon quadratics in wave equation, is that they destroy ``completely'' the conformal invariance of the wave equation (which supplies better decay and energy bounds). It seems to be impossible to establish uniform or slowly increasing conformal energy bound on wave component. Then one will face the insufficiency of the so-called principle decay.
See in \cite{M-2020-strong} for a detailed explanation. Roughly speaking, in the case of strong coupling, one can only expect uniform standard energy bound. This bound leads to (via Klainerman-Sobolev inequality) the following decay
\begin{equation}\label{eq3-04-10-2020}
|\del u|\simeq s^{-1}\simeq (|t-r|+1)^{-1/2}t^{-1/2}
\end{equation}
which will not be sufficient to close the bootstrap argument.

Fortunately, in the present case the strong couplings are in divergence form. This motivates us to ``integrate'' the wave equation, i.e, regarding the ``primitives'' of the wave component instead of it-self. The advantage of this strategy is that, the primitives also satisfy a wave equation (again with strong couplings), and the wave component is regarded as derivative of these primitives. Then the gradient of the wave component coupled in Klein-Gordon equation becomes components of Hessian forms of the primitives. As explained in \cite{M-2020-strong} (see also Proposition \ref{prop1-14-08-2020} in detail), Hessian form of a solution to wave equation enjoys better decay and energy bounds in the sens of principle decay and this will bring us not a little convenience. Here we only show an example. Compared with \eqref{eq3-04-10-2020}, when the standard energy on hyperboloid is uniformly bounded,
\begin{equation}\label{eq4-04-10-2020}
|\del\del u|\simeq (s/t)^{-1}s^{-2} + (s/t)^{-1}|\Box u| \simeq (|t-r|+1)^{-3/2}t^{-1/2} + t^{1/2}(1+|t-r|)^{-1/2}|\Box u|
\end{equation}
where $|\Box u|$ is quadratic by applying the wave equation and can be expected to have sufficient decay. Comparing \eqref{eq4-04-10-2020} with \eqref{eq3-04-10-2020}, the improvement only occurs deep in the right-cone $\{r<(1-\delta)t\}$. However, this is already sufficient in order to get integrable $L^2$ bounds on $v\del\del u$. More precisely, the Klein-Gordon component enjoys fast conical bounds:
$$
\|(s/t)^{-2}v\|_{L^2(\Hcal_s)}\lesssim \Ecal^N(s,v)^{1/2}
$$ 
for $N$ sufficiently large (see in detail in the proof or observe it roughly via Proposition \ref{prop1-fast-kg}). This additional $(s/t)^{-2}$ weight offsets the $(s/t)^{-1}$ conical decay in \eqref{eq4-04-10-2020}. Then (roughly) one obtains
\begin{equation}
\|v\del\del u\|_{L^2(\Hcal_s)}\lesssim s^{-2}\|(s/t)|v|\|_{L^2(\Hcal_s)}\lesssim s^{-2}\Ecal^N(s,v)^{1/2}
\end{equation} 
with $s^{-2}$ integrable with respect to $s$.  With this observation on divergence $\rightarrow$ primitive $\rightarrow$ Hessian form, we will be able to treat some originally non-integrable quadratic terms. 

However, writing the system with primitives is not a gratis trick.  As we will see in the following analysis, although  a primitive of wave component also resolves a wave equation, the initial data can not be easily constructed. To overcome this we consider a ``modified'' primitive instead, which is the primitive shifted by a solution to a free-linear wave equation. In Section \ref{sec-reformulation}, the system \eqref{eq1-main} will be reformulated with these shifted primitives and this leads to an auxiliary systems in the form of \eqref{eq-main}. In subsection \ref{subsec-auxi-structure} we will give a more detailed investigation on the structure of this type of system.

The present article is roughly compose by three parts. Section \ref{sec-tech} forms the first part in which we prepare all analytical tools. The second part composed by Section \ref{sec-reformulation} and \ref{sec-bootstrap} in which we establish global existence result on \eqref{eq1-main} and apply this on \eqref{eq1-Zakharov}. The last part, containing Section \ref{sec-conclusion-wave-map} and \ref{sec-wave-maps-proof}, is dedicated to the stability result of totally geodesic wave map in which we regard the full system formulated in \cite{Ab-2019}. The proof is quite similar to that of \eqref{eq1-main} in Section \ref{sec-bootstrap}. But due to the higher-order terms and some other structures, neither can be seen as a special case of the other.


\section{Recall of some technical tools}\label{sec-tech}
In this section we are going to recall some useful tools in the hyperboloidal foliation method. We will start with the basic notations of the frames, vector fields and the high-order derivatives in the first subsection. Then we recall / reformulate some estimates based on the linear structure of wave / Klein-Gordon equation in the following two subsections.

\subsection{Basic notation and calculus within hyperboloidal frame work}
\paragraph*{Frames and vector fields.}
We are interested in the foliation of the interior of the light cone $\Kcal \coloneqq \{(t,x)|r < t-1\}\subset \RR^{2+1}$ where $(t,x)=(t,x^a)=(t,x^1,x^2)$ is the Cartesian coordinates and $r = \sqrt{|x^1|^2+|x^2|^2}$. Then the foliation is performed with $\Hcal_s \coloneqq \{(t,x)| t = \sqrt{s^2+r^2}\}$ as following:
\begin{align*}
\Hcal_{[s_0,s_1]} &\coloneqq \bigcup_{s_0\leq s\leq s_1}(\Hcal_s\cap \Kcal)
                  =\{(t,x)|r< t-1, (s_0)^2\leq s^2 \leq (s_1)^2\},
\end{align*}
and
\begin{align*}
\Hcal_{[s_0,\infty]} &\coloneqq \bigcup_{s\geq s_0}(\Hcal_s\cap \Kcal)
                  =\{(t,x)|r< t-1, s^2\geq(s_0)^2 \}.
\end{align*}
We recall the semi-hyperboloidal frame introduced in \cite{LM1} \footnote{Throughout this article, Greek indices taking values in $\{0,1,2\}$ while Latin indices taking values in $\{1,2\}$.}:
$$
\delu_0:=\del_t,\quad \delu_a := \delb_a = (x^a/t)\del_t + \del_a,
$$
where $\delb_a$ denotes the vector fields tangent to the hyperboloids $\Hcal_s$ (which are called hyperbolic derivatives).
By a direct computation, we have the transition matrices between this frame and the natural frame $\{\del_{\alpha}\}$ as follows:
\begin{equation}\label{eq semi-frame}
\Phiu_{\alpha}^{\beta} := \left(
\begin{array}{ccc}
1 &0 &0
\\
x^1/t &1 &0
\\
x^2/t &0 &1 
\end{array}
\right),
\quad
\Psiu_{\alpha}^{\beta} := \left(
\begin{array}{ccc}
1 &0 &0
\\
-x^1/t &1 &0
\\
-x^2/t &0 &1 
\end{array}
\right)
\end{equation}
with
$$
\delu_{\alpha} = \Phiu_{\alpha}^{\beta}\del_{\beta},\quad \del_{\alpha} = \Psiu_{\alpha}^{\beta}\delu_{\beta}.
$$

Hence, assume that $T = T^{\alpha\beta}\del_{\alpha}\otimes\del_{\beta}$ be any 2-tensor defined in $\Kcal$ or its subset, it can be also represented by $\{\delu_{\alpha}\}$ as following:
$$
T = \Tu^{\alpha\beta} \delu_{\alpha}\otimes\delu_{\beta}
\quad\text{with}\quad 
\Tu^{\alpha\beta} = T^{\alpha'\beta'}\Psiu_{\alpha'}^{\alpha}\Psiu_{\beta'}^{\beta}.
$$

\paragraph*{High-order derivatives.}
Recall that in the region $\Kcal$, we introduced the following Lorentzian boosts in \cite{M-2020-strong}:
$$
L_a = x^a\del_t + t\del_a,\quad a = 1, 2
$$
and the following notation of high-order derivatives: let $I= (i_1,i_2,\cdots, i_m)$, $J= (j_1,j_2,\cdots, j_n)$ be multi-indices taking values in $\{0,1,2\}$ and $\{1,2\}$ respectively,
and then we define
$$
\del^IL^J = \del_{i_1}\del_{i_2}\cdots \del_{i_m}L_{j_1}L_{j_2}\cdots L_{j_n}
$$
to be an $(m+n)-$order derivative. 

Let $\mathscr{Z} = \{Z_i|i=0,1,\cdots, 6\}$ be a fimily of vector fields, where
$$
Z_0 = \del_t,\quad  Z_1=\del_1,\quad Z_2 = \del_2,\quad Z_3 = L_1,\quad Z_4=L_2,\quad Z_5 = \delu_1,\quad Z_6 = \delu_2.
$$
The following notation:
$$
Z^I := Z_{i_1}Z_{i_2}\cdots Z_{i_N}
$$
denotes a high-order derivative of order $N$ on $\mathscr{Z}$ with milti-index $I = (i_1,i_2,\cdots, i_N)$ with $i_k\in \{1,2,c\dots, 6\}$. If there are at most $a$ partial derivatives, $b$ Lorentzian boosts and $c$ hyperbolic derivatives in $Z^I$, then $I$ is said to be of type $(a,b,c)$.

We then recall the following notation introduced in \cite{M-2020-strong}:
$$
\mathcal{I}_{p,k} = \{I| I \text{ is of type }(a,b,0)\text{ with }a+b \leq p, b\leq k \},
$$
and the following quantities that will be applied in order to control varies of high-order derivatives later:

\begin{equation}\label{eq1 notation}
\aligned
|u|_{p,k} &:= \max_{K\in \mathcal{I}_{p,k}}|Z^K u|,\quad &&|u|_p := \max_{0\leq k\leq p}|u|_{p,k},
\\
|\del u|_{p,k} &:= \max_{\alpha=0,1,2}|\del_{\alpha} u|_{p,k}, &&|\del u|_p := \max_{0\leq k\leq p}|\del u|_{p,k},
\\
|\del^m u|_{p,k} &:= \max_{|I|=m}|\del^I u|_{p,k}, &&|\del^m u|_p := \max_{0\leq k\leq p}|\del^I u|_{p,k},
\\
|\dels u|_{p,k} &:= \max\{|\delu_1 u|_{p,k},|\delu_2u|_{p,k}\}, &&|\dels u|_p := \max_{0\leq k\leq p}|\dels u|_{p,k},
\\
|\del\dels  u|_{p,k} &:=\max_{a,\alpha} \{|\delu_a\del_{\alpha} u|_{p,k},|\del_{\alpha}\delu_a u|_{p,k}\},
&&| \del\dels u|_p :=\max_{0\leq k\leq p}| \del\dels u|_{p,k},
\\
|\dels\dels  u|_{p,k} &:=\max_{a,b} \{|\delu_a\delu_b u|_{p,k}\},
&&| \dels\dels u|_p :=\max_{0\leq k\leq p}| \dels\dels u|_{p,k}.
\endaligned
\end{equation}

\paragraph*{Standard and Conformal energy estimate on hyperboloids.}
There are two types, standard and conformal, of the energies defined in the hyperboloidal foliation framework. The standard energy, obtained by the standard multiplier $\del_t u$, is defined as follows in the Minkowski metric: 
\begin{equation}\label{standard energy}
E_{0,c}(s,u):= \int_{\Hcal_s}e_{0,c}[u]dx
\end{equation}
where the energy density
\begin{equation}\label{density-standard}
\aligned
e_{0,c}[u]:=&|\del_tu|^2+\sum_a|\del_au|^2 + 2(x^a/t)\del_tu\del_au + c^2u^2 
\\
=&\sum_a |\delu_a u|^2 + |(s/t)\del_tu|^2 + c^2u^2
\\
=&|\delu_{\perp}u|^2 + \sum_a|(s/t)\del_a u|^2 + \sum_{a<b}\big|t^{-1}\Omega_{ab}u\big|^2 + c^2u^2
\endaligned
\end{equation}
with $\delu_{\perp} := \del_t + (x^a/t)\del_a$. We denote by $e_0[u] = e_{0,c=0}[u]$.

For standard energy, we have the following estimate (for proof, see for example \cite{LM1}):
\begin{proposition}[Standard energy estimate]\label{prop 1 energy}
	We consider the $C^2$ solution $u$ to the following wave / Klein-Gordon equation
	$$
	\Box u + c^2 u = F,
	$$
	in the region $\Hcal_{[s_0,s_1]}$ and vanishes near the conical boundary $\del\Kcal = \{t=r-1\}$. Then the following energy estimate holds:
	\begin{equation}\label{ineq 3 prop 1 energy}
	\aligned
	E_{0,c}(s,u)^{1/2}\leq& E_{0,c}(2,u)^{1/2} + \int_2^s \|F\|_{L^2(\Hcal_\tau)}d\tau.
	\endaligned
	\end{equation}
\end{proposition}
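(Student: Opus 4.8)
The plan is to run the classical energy--momentum (multiplier) argument, specialized to the hyperboloidal slicing of $\Kcal$. First I would contract the equation $\Box u + c^2u = F$ with the standard multiplier $2\del_tu$. After the usual integration-by-parts bookkeeping the left-hand side becomes a spacetime divergence, so the equation takes the form $\del_\alpha V^\alpha[u] = 2F\,\del_tu$, where $V[u]$ is the standard energy current. The two manifestly nonnegative rewritings of the density in \eqref{density-standard} are exactly what is needed here: they show that the flux of $V[u]$ through each hyperboloid $\Hcal_s$ equals $\int_{\Hcal_s}e_{0,c}[u]\,dx = E_{0,c}(s,u)\ge 0$, and in particular that $\|(s/t)\del_tu\|_{L^2(\Hcal_s)}^2\le E_{0,c}(s,u)$, which will be used to absorb the source term at the last step.

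Next I would integrate the divergence identity over the truncated region $\Hcal_{[2,s]}$ and apply Stokes' theorem. Two observations make this clean. On $\Hcal_\tau\cap\Kcal$ one has $r<(\tau^2-1)/2$, so $\Hcal_{[2,s]}$ is bounded and nothing comes from spatial infinity; and the part of $\del\Hcal_{[2,s]}$ lying on the cone $\del\Kcal$ contributes nothing because $u$ (hence $V[u]$) vanishes near $\del\Kcal$ by hypothesis. What is left are the fluxes through the two caps $\Hcal_s\cap\Kcal$ and $\Hcal_2\cap\Kcal$; computing the conormal to $\Hcal_\tau$ and the induced measure, these are precisely $E_{0,c}(s,u)$ and $-E_{0,c}(2,u)$, while the spacetime source integral, rewritten slicewise using $\del t/\del\tau=\tau/t$ on $\Hcal_{[2,s]}$, becomes $2\int_2^s\int_{\Hcal_\tau}(\tau/t)F\,\del_tu\,dx\,d\tau$. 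Thus
\begin{equation*}
E_{0,c}(s,u) = E_{0,c}(2,u) + 2\int_2^s\int_{\Hcal_\tau}\frac{\tau}{t}\,F\,\del_tu\,dx\,d\tau .
\end{equation*}

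Finally I would estimate the source. By Cauchy--Schwarz on each slice, $\big|\int_{\Hcal_\tau}(\tau/t)F\,\del_tu\,dx\big|\le\|F\|_{L^2(\Hcal_\tau)}\,\|(\tau/t)\del_tu\|_{L^2(\Hcal_\tau)}\le\|F\|_{L^2(\Hcal_\tau)}\,E_{0,c}(\tau,u)^{1/2}$, so that $E_{0,c}(s,u)\le E_{0,c}(2,u)+2\int_2^s\|F\|_{L^2(\Hcal_\tau)}E_{0,c}(\tau,u)^{1/2}\,d\tau$; the elementary comparison lemma for $y'\le 2h\sqrt{y}$ (equivalently, differentiating in $s$ and dividing by $2E_{0,c}(s,u)^{1/2}$) then gives $E_{0,c}(s,u)^{1/2}\le E_{0,c}(2,u)^{1/2}+\int_2^s\|F\|_{L^2(\Hcal_\tau)}\,d\tau$, which is \eqref{ineq 3 prop 1 energy}. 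The only genuinely delicate point is the geometric reduction of the second step: one must verify that the conormal and measure factors on $\Hcal_\tau$ combine so that the cap fluxes are \emph{exactly} $E_{0,c}$ (including the cross term $2(x^a/t)\del_tu\,\del_au$ in $e_{0,c}[u]$) and so that the source carries precisely the weight $\tau/t$ making Cauchy--Schwarz return $\|F\|_{L^2(\Hcal_\tau)}E_{0,c}^{1/2}$ with no leftover weight. Everything else is routine, which is why the statement simply refers to \cite{LM1} for the details.
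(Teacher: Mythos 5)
Your argument is correct and is exactly the classical multiplier proof (contraction with $\del_tu$, Stokes' theorem on $\Hcal_{[2,s]}$ with the cap fluxes equal to $E_{0,c}$, slicewise Cauchy--Schwarz with the $\tau/t$ weight, then the $y'\le 2h\sqrt{y}$ comparison) that the paper delegates to \cite{LM1}. The flux and measure bookkeeping you flag as the delicate point is carried out the same way there, so there is nothing to add.
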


While the conformal energy on hyperboloid $\Hcal_s$ is defined as follows:
$$
E_2(s,u) := \int_{\Hcal_s}\Big(\sum_a|s\delu_a u|^2 + s^{-2}|K_2u + su|^2\Big)dx
$$
where $K_2 = s^2(s/t)\del_t + 2sx^a\delu_a$ is the conformal multiplier. We also have an estimate for the energy of this type:

\begin{proposition}[Conformal energy estimate on hyperboloids]\label{prop-conformal}
	Let $u$ be a sufficiently regular function defined in $\Hcal_{[s_0,s_1]}$, vanishes near the conical boundary $\del\Kcal = \{r=t-1\}$. Then the following estimate holds:
	\begin{equation}\label{eq8-15-06-2020}
	\Ec(s_1,u)^{1/2} \leq \Ec(s_0,u)^{1/2} + \int_{s_0}^{s_1}s\|\Box u\|_{L^2(\Hcal_s)}ds.
	\end{equation}
\end{proposition}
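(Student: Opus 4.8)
The plan is to derive this as the hyperboloidal counterpart of the classical conformal (Morawetz) energy inequality, via a divergence identity for the conformal multiplier. Two algebraic facts will organize everything. First, writing $K := (t^2+r^2)\del_t + 2tx^a\del_a$ for the conformal Killing field of Minkowski space (so $\mathcal{L}_K m = 4t\,m$, with $m$ the flat metric), one has $K_2 = (s/t)K$ on $\Kcal$, hence $K_2 u + su = (s/t)(Ku + tu)$. Second, in the foliation coordinates $(s,x)$ on $\Hcal_{[s_0,s_1]}$ (with $t = \sqrt{s^2+|x|^2}$) the coordinate vector fields are $\del_s = (s/t)\del_t$ and $\del_{x^a} = \delu_a$, so $K_2 = s^2\del_s + 2sx^a\delu_a$ and $s^{-1}(K_2 u + su) = s\del_s u + 2x^a\delu_a u + u$. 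The whole argument will then reduce to the single identity
\[
\frac{d}{ds}\,\Ec(s,u) \;=\; 2\int_{\Hcal_s}\big(K_2 u + s u\big)\,\Box u \; dx .
\]

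To obtain it I would introduce the modified conformal current
\[
P_\alpha \;:=\; T_{\alpha\beta}[u]\,K^\beta + t\,u\,\del_\alpha u - \tfrac12\,(\del_\alpha t)\,u^2 ,
\qquad T_{\alpha\beta}[u] := \del_\alpha u\,\del_\beta u - \tfrac12\, m_{\alpha\beta}\,m^{\mu\nu}\del_\mu u\,\del_\nu u ,
\]
and verify the pointwise identity $\nabla^\alpha P_\alpha = (Ku + tu)\,\Box u$: the contraction $T_{\alpha\beta}\nabla^\alpha K^\beta$ equals $2t\,\mathrm{tr}(T) = -t\,m^{\mu\nu}\del_\mu u\,\del_\nu u$ (the space dimension $2$ enters here through $\mathrm{tr}(T) = -\tfrac12 m^{\mu\nu}\del_\mu u\,\del_\nu u$), and this is cancelled exactly by $\nabla^\alpha\big(t\,u\,\del_\alpha u - \tfrac12(\del_\alpha t)u^2\big) = t\,m^{\mu\nu}\del_\mu u\,\del_\nu u + t u\,\Box u$. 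I would then integrate $\nabla^\alpha P_\alpha$ over the slab $\Hcal_{[s_0,s_1]}$: the contribution on the cone $\del\Kcal$ drops out since $u$ vanishes there, leaving the fluxes of $P$ through $\Hcal_{s_0}$ and $\Hcal_{s_1}$, and a completing-the-square computation (with unit normal $(t,x^a)/s$ and induced measure $(s/t)\,dx$) identifies the flux through $\Hcal_s$ with $\tfrac12\Ec(s,u)$ — the $s^{-2}$ weight and the shift $+su$ in the definition of $\Ec$ being exactly what makes this match. Differentiating in $s_1$ and using $(s/t)(Ku+tu) = K_2u+su$ gives the displayed identity. Alternatively, and perhaps more cleanly, one can bypass the Lorentzian divergence theorem and differentiate $\Ec(s,u)$ directly in the parameter $s$ under the integral over $\RR^2_x$, using $[\del_s,\delu_a]=0$, integrating by parts on $\RR^2_x$ (no connection terms, $\delu_a$ being coordinate fields, and the support hypothesis killing boundary terms), and substituting $\Box u = F$.

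Once the identity is in hand, the rest is routine: writing $(K_2u+su)\Box u = \big(s^{-1}(K_2u+su)\big)\big(s\,\Box u\big)$ and applying Cauchy--Schwarz on $\Hcal_s$ together with $\int_{\Hcal_s}s^{-2}|K_2u+su|^2\,dx \le \Ec(s,u)$ yields
\[
\Big|\frac{d}{ds}\,\Ec(s,u)\Big| \;\le\; 2\,s\,\|\Box u\|_{L^2(\Hcal_s)}\,\Ec(s,u)^{1/2} ,
\]
hence $\frac{d}{ds}\,\Ec(s,u)^{1/2} \le s\,\|\Box u\|_{L^2(\Hcal_s)}$ wherever $\Ec(s,u)>0$; integrating from $s_0$ to $s_1$ (the set where $\Ec$ vanishes being trivial) gives \eqref{eq8-15-06-2020}.

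The main obstacle will be the exact identity for $\tfrac{d}{ds}\Ec$ — getting the constant $2$ with no stray zeroth-order or boundary terms — since this is precisely where the choice of the lower-order correction in $P_\alpha$, the $s^{-2}$ weight, and the splitting into components tangent and normal to $\Hcal_s$ must all be reconciled, and it is genuinely specific to $2+1$ dimensions. As a structural sanity check on the constants I would compare with the conformal change of variables $m = s^2\big(-\mathrm{d}\sigma^2 + g_{\mathbb{H}^2}\big)$, $\sigma = \ln s$, which maps $\Kcal$ onto the static spacetime $\RR_\sigma\times\mathbb{H}^2$: setting $u = s^{-1/2}\phi$ turns $\Box u = F$ into $\big(-\del_\sigma^2 + \Delta_{\mathbb{H}^2} + \tfrac14\big)\phi = s^{5/2}\,\Box u$, whose natural energy is nonnegative thanks to the spectral-gap (Hardy) inequality $\int_{\mathbb{H}^2}|\nabla\phi|^2 \ge \tfrac14\int_{\mathbb{H}^2}\phi^2$ on the hyperbolic plane and which matches $\Ec(s,u)$ up to the measure change, the remaining delicacy being the borderline nature of the mass term $+\tfrac14$.
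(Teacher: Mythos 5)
Your proposal is correct and follows essentially the same route as the proof the paper defers to \cite{M-2020-strong}: multiply by the conformal multiplier $K_2u+su$ (equivalently, use the current built from the conformal Killing field $K=(t^2+r^2)\del_t+2tx^a\del_a$ with the lower-order correction) to get the exact identity $\frac{d}{ds}\Ec(s,u)=2\int_{\Hcal_s}(K_2u+su)\,\Box u\,dx$, then Cauchy--Schwarz and integration in $s$ give \eqref{eq8-15-06-2020}. The identity does hold with the constant $2$ and no stray terms --- in the $(s,x)$-coordinate computation the potentially dangerous contribution $\int 2x^a\delu_a u\,\Delta_x u\,dx$ vanishes after integration by parts precisely because the spatial dimension is $2$, confirming the point you flagged.
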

\noindent Unlike the standard energy, the conformal one does not directly control the derivative $\del_t u$ and $u$. Therefore, the following lemma is established in \cite{M4} in order to get the bound on $u$: 

\begin{lemma}\label{proposition 1 01-01-2019}
	Let $u$ be a $C^1$ function defined in $\Hcal_{[s_0,s_1]}$ and vanishes near $\del\Kcal$. Then
	\begin{equation}\label{eq 1 14-12-2018}
	\|(s/t)u\|_{L^2(\Hcal_{s_1})}\leq \|(s/t)u\|_{L^2(\Hcal_{s_0})} + C\int_{s_0}^{s_1}s^{-1}\Ec(s,u)^{1/2}ds.
	\end{equation}
\end{lemma}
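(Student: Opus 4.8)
The plan is to reduce the estimate to a scalar ordinary differential inequality for $h(s):=\|(s/t)u\|_{L^2(\Hcal_s)}^2$ and then integrate; the entire content is to make the conformal energy density reappear on the right-hand side \emph{together with the decay factor} $s^{-1}$, since without that factor the resulting bound would only give growth linear in $s$ and be useless downstream.

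I would work in the hyperboloidal parametrization of $\Hcal_s$ by $x\in\RR^2$ (with $t=\sqrt{s^2+|x|^2}$), in which the derivative along $s\mapsto\Hcal_s$ at fixed $x$ is $\del_s:=(s/t)\del_t$ and the tangential directions are the $\delu_a$. Since $u$ vanishes near $\del\Kcal$ there is no boundary contribution, so $h$ is differentiable with
\[
h'(s)=2\int_{\Hcal_s}(s/t)u\cdot\del_s\big[(s/t)u\big]\,dx,\qquad \del_s\big[(s/t)u\big]=\tfrac{r^2}{t^3}u+(s/t)^2\del_t u .
\]
Next I would eliminate $\del_t u$ through the conformal multiplier: from $K_2u+su=s^2(s/t)\del_t u+2sx^a\delu_a u+su$ one gets $(s/t)^2\del_t u=(s/t)s^{-2}(K_2u+su)-\tfrac2s(s/t)x^a\delu_a u-\tfrac ut$, and, using $\tfrac{r^2}{t^3}u-\tfrac ut=-\tfrac{s^2}{t^3}u$,
\[
\del_s\big[(s/t)u\big]=-\frac{s^2}{t^3}u+(s/t)\,s^{-2}(K_2u+su)-\frac2s(s/t)x^a\delu_a u .
\]
Plugging this back, the first term contributes $-2\int_{\Hcal_s}\tfrac{s^3}{t^4}u^2\,dx\le0$, which I would simply \emph{discard}; this sign is genuinely used — bounding that term in absolute value only gives $\tfrac2s h$ and loses the estimate through a Gronwall factor $s_1/s_0$.

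It then remains to control $2\int_{\Hcal_s}(s/t)^2u\,s^{-2}(K_2u+su)\,dx$ and $\tfrac4s\int_{\Hcal_s}(s/t)^2u\,x^a\delu_a u\,dx$. The decisive point — and the only place I expect anything delicate — is how to distribute the weights in the first integral: one must \emph{not} pair $\big((s/t)^2u\big)$ with $\big(s^{-2}(K_2u+su)\big)$, since that wastes the $s^{-1}$ and only yields $\|(s/t)u\|_{L^2}\Ec(s,u)^{1/2}$; instead one writes the integrand as $(t^{-1}u)\cdot\big(t^{-1}(K_2u+su)\big)$ and applies Cauchy--Schwarz on $\Hcal_s$. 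Indeed $t^{-1}u=s^{-1}(s/t)u$, so $\|t^{-1}u\|_{L^2(\Hcal_s)}=s^{-1}h(s)^{1/2}$, while $\|t^{-1}(K_2u+su)\|_{L^2(\Hcal_s)}^2=\int_{\Hcal_s}(s/t)^2s^{-2}|K_2u+su|^2\,dx\le\int_{\Hcal_s}s^{-2}|K_2u+su|^2\,dx\le\Ec(s,u)$ because $s/t\le1$ on $\Hcal_s$; hence this integral is $\le s^{-1}h(s)^{1/2}\Ec(s,u)^{1/2}$. For the second integral I would use $|x^a\delu_a u|\le r\sum_a|\delu_a u|\le t\sum_a|\delu_a u|$, so that $(s/t)^2|u||x^a\delu_a u|\le(s/t)|u|\sum_a|s\delu_a u|$, and Cauchy--Schwarz together with $\sum_a\|s\delu_a u\|_{L^2(\Hcal_s)}^2\le\Ec(s,u)$ bounds it by $Cs^{-1}h(s)^{1/2}\Ec(s,u)^{1/2}$.

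Collecting the three contributions gives $h'(s)\le Cs^{-1}h(s)^{1/2}\Ec(s,u)^{1/2}$, hence $\tfrac{d}{ds}h(s)^{1/2}\le\tfrac C2 s^{-1}\Ec(s,u)^{1/2}$ (the $h^{1/2}$ factors cancel, so this is a direct integration rather than a Gronwall argument; the set $\{h=0\}$ is harmless, e.g. by replacing $h$ with $h+\delta$ and letting $\delta\to0$). Integrating from $s_0$ to $s_1$ then produces exactly \eqref{eq 1 14-12-2018}. The one subtle ingredient is thus the pair of elementary weight identities $t^{-1}u=s^{-1}(s/t)u$ and $t^{-2}=(s/t)^2s^{-2}\le s^{-2}$; everything else is routine calculus in the semi-hyperboloidal frame.
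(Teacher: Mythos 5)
Your proof is correct: the differentiation of $h(s)=\|(s/t)u\|_{L^2(\Hcal_s)}^2$ along the foliation, the elimination of $\del_t u$ through $K_2u+su$, the discarding of the negative term $-2\int_{\Hcal_s}s^3t^{-4}u^2\,dx$, and the weight bookkeeping $t^{-2}=(s/t)^2s^{-2}\leq s^{-2}$ all check out and yield exactly \eqref{eq 1 14-12-2018}. The paper itself does not reproduce a proof (it defers to \cite{M4}, with a sketch in \cite{M-2020-strong}), and your argument is essentially the standard one given there, so there is nothing further to flag.
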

Once $u$ is bounded, recalling 
$$
\|s^{-1}K_2 u + u\|_{L^2(\Hcal_s)} = \|s(s/t)\del_t u + 2x^a\delu_a u\|_{L^2(\Hcal_s)}
$$ is bounded by $E_2(s,u)^{1/2}$, $\|(s/t)^2s\del_t u\|_{L^2(\Hcal_s)}$ is bounded by the following quantity:
\begin{equation}\label{eq1-09-10-2020}
F_2(s_0,s,u) = \|(s/t)u\|_{L^2(\Hcal_s)} + E_2(s,u)^{1/2} + \int_{s_0}^s\tau^{-1}E_2(\tau,u)^{1/2}d\tau
\end{equation}
The high-order version is defined as following:
\begin{equation}
\Fcal_2^{p,k}(s_0;s,u):= \max_{|I|+||J\leq p\atop |J|\leq k}F_2(s_0;s,\del^IL^J u),\quad
\Fcal_2^N(s_0;s,u):= \max_{|I|+|J|\leq N}F_2(s_0;s,\del^IL^J u).
\end{equation}
A sketch on the proofs of this conformal energy estimate and Lemma \ref{proposition 1 01-01-2019} within flat background metric can be found in \cite{M-2020-strong}. 

\noindent For the convenience of discussion, we also introduce the following high-order energy:

\begin{equation}\label{eq 3' 01-01-2019}
\Ecal_{0,c}^{p,k}(s,u) := \max_{|I|+J\leq p\atop |J|\leq k}E_{0,c}(s,\del^IL^J u),\quad \Ecal_0^{p,k}(s,u) := \max_{|I|+|J|\leq p\atop |J|\leq k}E_0(s,\del^IL^J u),
\end{equation}
\begin{equation}\label{eq 3 01-01-2019}
\Ecal_{0,c}^N(s,u) := \max_{|I|+|J|\leq N}E_{0,c}(s,\del^IL^J u),\quad \Ecal_0^N(s,u) := \max_{|I|+|J|\leq N}E_0(s,\del^IL^J u),
\end{equation}
\begin{equation}\label{eq 1 09-26-2020}
\Ecal_2^N(s,u) := \max_{|I|+|J|\leq N}E_2(s,\del^IL^J u), \quad \Ecal_2^{p,k}(s,u) := \max_{|I|+|J|\leq p\atop |J|\leq k}E_2(s,\del^IL^J u).
\end{equation}

\paragraph*{Bounds of high-order derivatives with energies.}
These bounds are established in \cite{M-2020-strong}:
\\
- $L^2$ bounds:
\begin{equation}\label{eq1-10-06-2020}
\|(s/t)|\del u|_{p,k}\|_{L^2(\Hcal_s)} + \||\dels u|_{p,k}\|_{L^2(\Hcal_s)} 
+ \|c|u|_{p,k}\|\leq C\Ecal_{0,c}^{p,k}(s,u)^{1/2},
\end{equation}
\begin{equation}\label{eq5-10-06-2020}
\|s|\del\dels u|_{p-1,k-1}\|_{L^2(\Hcal_s)} + \|t|\dels\dels u|_{p-1,k-1}\|_{L^2(\Hcal_s)} \leq C\Ecal_0^{p,k}(s,u)^{1/2},
\end{equation}
\begin{equation}\label{eq2-10-06-2020}
\aligned
\|(s/t)^2s|\del u|_{p,k}\|_{L^2(\Hcal_s)}& + \|s|\dels u|_{p,k}\|_{L^2(\Hcal_s)} 
+  \|(s/t)|u|_{p,k}\|_{L^2(\Hcal_s)}
\\
\leq& C\Fcal_2^{p,k}(s_0;s,u),
\endaligned
\end{equation}
\begin{equation}\label{eq6-10-06-2020}
\aligned
\|(s/t)s^2|\del\dels u|_{p-1,k-1}\|_{L^2(\Hcal_s)}& + \|st|\dels\dels u|_{p-1,k-1}\|_{L^2(\Hcal_s)}
\\
\leq& C\Fcal_2^{p,k}(s_0;s,u).
\endaligned
\end{equation}
\\
- $L^{\infty}$ bounds:
\begin{equation}\label{eq3-10-06-2020}
\aligned
\|s|\del u|_{p,k}\|_{L^{\infty}(\Hcal_s)}& + \|t|\dels u|_{p,k}\|_{L^\infty(\Hcal_s)}  + \|ct|u|_{p,k}\|_{L^{\infty}(\Hcal_s)}
\\
\leq& C\Ecal_{0,c}^{p+2,k+2}(s,u)^{1/2},
\endaligned
\end{equation}
\begin{equation}\label{eq7-10-06-2020}
\|st|\del\dels u|_{p-1,k-1}\|_{L^\infty(\Hcal_s)} + \|t^2|\dels\dels u|_{p-1,k-1}\|_{L^{\infty}(\Hcal_s)} \leq C\Ecal_0^{p+2,k+2}(s,u)^{1/2},
\end{equation}
\begin{equation}\label{eq4-10-06-2020}
\aligned
\|(s/t)s^2|\del u|_{p,k}\|_{L^\infty(\Hcal_s)} + \|st|\dels u|_{p,k}\|_{L^\infty(\Hcal_s)} 
&+ \|s|u|_{p,k}\|_{L^\infty(\Hcal_s)}
\\
\leq& C\Fcal_2^{p+2,k+2}(s_0;s,u),
\endaligned
\end{equation}
\begin{equation}\label{eq8-10-06-2020}
\aligned
\|s^3|\del\dels u|_{p-1,k-1}\|_{L^{\infty}(\Hcal_s)}& + \|st^2|\dels\dels u|_{p-1,k-1}\|_{L^{\infty}(\Hcal_s)} 
\\
\leq& C\Fcal_2^{p+2,k+2}(s_0;s,u).
\endaligned
\end{equation}

We also need the following bound on products and null quadratic forms in $\Hcal_{[s_0,s_1]}$. Firstly,
\begin{equation}\label{eq12-10-06-2020}
|AB|_{p,k}\leq C|A|_{p,k}|B|_{p_1,k_1} + C|A|_{p_1,k_1}|B|_{p,k} 
\end{equation}
where $p_1 = [p/2], k_1 = [k/2]$, $A,B$ sufficiently regular in $\Hcal_{[s_0,s_1]}$ and $C$ a constant determined by $p$. Furthermore, let $A$ be a (constant coefficient) quadratic null form, i.e.,
$$
A^{\alpha\beta}\xi_{\alpha}\xi_{\beta} = 0,\quad \forall \xi_0^2 - \xi_1^2 - \xi_2^2 = 0.
$$
Then
\begin{equation}\label{eq13-10-06-2020}
\aligned
|A^{\alpha\beta}\del_{\alpha}u\del_{\beta}v|_{p,k}\leq& C|A|(s/t)^2|\del u|_{p_1,k_1}|\del v|_{p,k} + C(s/t)^2|A||\del u|_{p,k}|\del v|_{p_1,k_1} 
\\
&+ C|A||\dels u|_{p_1,k_1}|\del u|_{p,k} + C|A||\dels u|_{p,k}|\del v|_{p_1,k_1}
\\
&+C|A||\del u|_{p_1,k_1}|\dels v|_{p,k} + C|A||\del u|_{p,k}|\dels v|_{p_1,k_1}
\endaligned
\end{equation}
where $|A| = \max_{\alpha,\beta}|A^{\alpha\beta}|$. This is established in \cite{LM1}. For a proof, see for example in \cite{M-2020-strong}.

\subsection{Linear estimates on wave equation}
\paragraph*{Bounds on Hessian form of wave component.}
We are now at a state to recall various bounds of the wave and the Klein-Gordon equation due to their linear structure. For the Hessian form, we have the coming proposition:
\begin{proposition}\label{prop1-14-08-2020}
	Let $u$ be a function defined in $\Hcal_{[s_0,s_1]}$, sufficiently regular. Suppose that $|I|+|J|\leq p$ and $|J|\leq k$. Then 
	\begin{equation}\label{eq1 lem Hessian-flat-zero}
	(s/t)^2|\del_\alpha\del_\beta \del^IL^J u| \leq C|\Box u|_{p,k} + Ct^{-1}|\del u|_{p+1,k+1}.
	\end{equation}
	\begin{equation}\label{eq2 lem Hessian-flat-zero}
	(s/t)^2|\del\del  u|_{p,k} \leq C|\Box u|_{p,k} + Ct^{-1}|\del u|_{p+1,k+1}.
	\end{equation}
\end{proposition}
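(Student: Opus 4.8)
The plan is to derive the pointwise Hessian bound from the wave equation $\Box u = g^{\alpha\beta}\del_\alpha\del_\beta u$ (with $g$ the Minkowski metric) by isolating the ``bad'' second derivative $\del_t\del_t u$ and expressing it in terms of $\Box u$ and ``good'' derivatives, i.e. those involving at least one hyperbolic derivative $\delu_a$. First I would work out the algebra in the semi-hyperboloidal frame. Writing $\del_a = \delu_a - (x^a/t)\del_t$, one computes
$$
\Box u = -\del_t\del_t u + \sum_a \del_a\del_a u = -\del_t\del_t u + \sum_a\big(\delu_a - (x^a/t)\del_t\big)\big(\delu_a - (x^a/t)\del_t\big)u.
$$
Expanding, the coefficient of $\del_t\del_t u$ that emerges is $-1 + \sum_a (x^a/t)^2 = -(s/t)^2$, since $t^2 - r^2 = s^2$. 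All remaining terms either carry an explicit $\delu_a$ acting on something, or come from derivatives hitting the coefficients $x^a/t$, which produce factors of $t^{-1}$ times first-order derivatives. Hence, schematically,
$$
(s/t)^2\,\del_t\del_t u = -\Box u + (\text{terms of the form } \delu_a\del_\beta u \text{ or } \del_\beta\delu_a u) + t^{-1}(\text{first derivatives of } u).
$$

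Next I would bound the ``good'' terms. Any second derivative $\del_\alpha\del_\beta u$ with $\alpha$ or $\beta$ not both $0$ can, via $\del_a = \delu_a - (x^a/t)\del_t$, be re-expressed so that it contains a factor $\delu_a$ (applied to $u$ or to a first derivative of $u$) plus, again, $t^{-1}$ times first derivatives. Using $s/t \le 1$ we have $(s/t)^2|\del\dels u| \le |\del\dels u|$, and the crude but sufficient estimate $|\delu_a \del_\beta u| \lesssim t^{-1}|\del u|_{1,1}$ — because $\delu_a = t^{-1}L_a$ up to the $\del_t$ piece, so one hyperbolic derivative costs a power of $t^{-1}$ and buys one boost. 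This gives $(s/t)^2|\del_\alpha\del_\beta u| \le C|\Box u| + Ct^{-1}|\del u|_{1,1}$ for the order-zero case. Then I would commute: for $\del^I L^J u$ with $|I|+|J|\le p$, $|J|\le k$, apply the same identity to $w = \del^I L^J u$, noting that $\Box$ commutes with $\del_\alpha$ exactly and that $[\Box, L_a] = 0$ as well, so $\Box(\del^I L^J u) = \del^I L^J(\Box u)$, hence $|\Box w|_{0,0}$ is controlled by $|\Box u|_{p,k}$. The $t^{-1}|\del w|_{1,1}$ term becomes $t^{-1}|\del u|_{p+1,k+1}$ after absorbing the extra derivative. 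Taking the max over admissible $I,J$ yields \eqref{eq1 lem Hessian-flat-zero}, and \eqref{eq2 lem Hessian-flat-zero} is just the restatement with the max over $\alpha,\beta$ folded into the $|\cdot|_{p,k}$ notation.

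The main obstacle — really the only delicate point — is bookkeeping the commutators and the coefficient weights cleanly: one must check that every time a derivative from $\del^I L^J$ or from the frame-change formula lands on a coefficient $x^a/t$ (or on the transition matrices $\Phiu,\Psiu$ of \eqref{eq semi-frame}), the resulting factor is genuinely $O(t^{-1})$ times a lower-weight derivative, and that boosts applied to $x^a/t$ stay bounded (indeed $L_b(x^a/t)$ is a smooth bounded function on $\Kcal$). This is the standard ``the semi-hyperboloidal frame is well-adapted'' computation; since the paper works with the flat metric here, there are no metric-coefficient error terms and the argument is purely the Minkowski identity above plus induction on $|I|+|J|$. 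I would organize it as: (i) the algebraic identity for $(s/t)^2\del_t\del_t u$; (ii) the order-zero pointwise bound; (iii) the commutation step; (iv) take maxima.
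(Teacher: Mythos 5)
Your argument is correct and coincides with the standard proof that the paper itself only cites (established in \cite{LM1}, sketched in \cite{M-2020-strong}): expand $\Box$ in the semi-hyperboloidal frame via $\del_a=\delu_a-(x^a/t)\del_t$ to isolate the coefficient $-(s/t)^2$ of $\del_t\del_t u$, bound the remaining terms using $\delu_a=t^{-1}L_a$ (which in fact holds exactly, not just ``up to the $\del_t$ piece''), and commute with $\del^IL^J$ using $[\Box,\del_\alpha]=[\Box,L_a]=0$ together with the constant-coefficient commutators $[L_a,\del_\alpha]$. No substantive gap.
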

This is established in \cite{LM1}. A sketch of proof can be found in \cite{M-2020-strong}.

\paragraph*{Decay bounds based on Poisson's formula.}
By a direct calculation with the Poisson's formula, we have the following decay bounds on the free-linear wave equation:
\begin{lemma}\label{lem1-13-06-2020}
	Let $u$ be the $C^2$ solution to the following Cauchy problem of free-linear wave equation:
	\begin{equation}
	\Box u = 0,\quad u(t_0,x) = u_0, \quad \del_tu(t_0,x) = u_1, \quad t_0\geq 2
	\end{equation}
	with $u_0,u_1$ sufficiently regular and compactly supported in $\{|x|<t_0-1\}$. Suppose that
	$$
	|u_0(x)| + |u_1(x)| + |\del u_0(x)|\leq C_I.
	$$
	Then for $(t,x)\in\Kcal = \{r<t-1\}$ and $t\geq t_0$, 
	\begin{equation}\label{eq1-17-08-2020}
	|u(t,x)|\leq CC_I t_0 s^{-1}\leq CC_It_0(t-r)^{-1/2}t^{-1/2}, \quad s=\sqrt{t^2-|x|^2}.
	\end{equation}
\end{lemma}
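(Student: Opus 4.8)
The plan is to start from the explicit Poisson (Kirchhoff-type in $2+1$ dimensions) representation formula for the solution of $\Box u = 0$ with data $(u_0,u_1)$ posed on $\{t=t_0\}$. In two spatial dimensions the formula reads, up to the usual constant,
\[
u(t,x) = \del_t\!\left(\frac{1}{2\pi}\int_{|y-x|<t-t_0}\frac{u_0(y)}{\sqrt{(t-t_0)^2-|y-x|^2}}\,dy\right)
+ \frac{1}{2\pi}\int_{|y-x|<t-t_0}\frac{u_1(y)}{\sqrt{(t-t_0)^2-|y-x|^2}}\,dy,
\]
so that $u(t,x)$ is a sum of integrals over the solid disk of radius $t-t_0$ centered at $x$, against the singular kernel $\big((t-t_0)^2-|y-x|^2\big)^{-1/2}$, with the integrands supported in $\{|y|<t_0-1\}$. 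The first step is to reduce everything to estimating the model integral $\int \chi(y)\big((t-t_0)^2-|y-x|^2\big)^{-1/2}\,dy$ and a companion one coming from the $\del_t$-differentiated term (which produces, after differentiating under the integral and integrating by parts off the moving boundary, terms controlled by $|u_0|$, $|\del u_0|$ and $|u_1|$ of the same type — this is where the hypothesis bounding $|\del u_0|$ enters).

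The second step is the geometric heart of the estimate: for $(t,x)\in\Kcal$ with $t\ge t_0$, the support ball $\{|y|<t_0-1\}$ is a fixed ball of radius $<t_0$, while the domain of integration is the disk $\{|y-x|<t-t_0\}$. On the intersection, I would bound the kernel by exploiting that $y$ ranges over a set of area $\lesssim t_0^2$ and that the distance from such $y$ to the sphere $\{|y-x|=t-t_0\}$ is bounded below in terms of $s=\sqrt{t^2-r^2}$. Concretely, for $|y|<t_0-1$ one has $(t-t_0)^2-|y-x|^2 = (t-t_0-|y-x|)(t-t_0+|y-x|)$; the second factor is comparable to $t$, and the first factor is $\gtrsim s^2/t$ after using $|y-x|\le r+|y|\le r+t_0$ together with $t-r\ge 1$ and $s^2=(t-r)(t+r)$. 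Hence the kernel is $\lesssim (s^2/t\cdot t)^{-1/2}=s^{-1}$ on the relevant region — except one must be slightly careful near the edge of the support ball, where the naive pointwise bound on the kernel degenerates. The remedy is not to bound the kernel pointwise everywhere but to integrate it: $\int_{D}\big((t-t_0)^2-|y-x|^2\big)^{-1/2}\,dy$ over the full disk $D$ of radius $t-t_0$ equals a constant times $(t-t_0)$, so restricting to the piece of $D$ lying within the support ball and using that the kernel there is comparable (away from a measure-zero set) to its value at a generic interior point of the support, one gets the clean bound $\lesssim t_0\cdot s^{-1}$: the $t_0$ is the diameter scale of the support set, the $s^{-1}$ is the kernel size.

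The third step is bookkeeping: collect the contributions of the $u_1$-term and the (integrated-by-parts) $u_0$-term, each bounded by $CC_I t_0 s^{-1}$, then convert $s^{-1}$ to $(t-r)^{-1/2}t^{-1/2}$ via $s^2=(t-r)(t+r)$ and $t+r\simeq t$ inside $\Kcal$, giving exactly \eqref{eq1-17-08-2020}. The main obstacle I anticipate is the careful treatment of the $\del_t$ acting on the $u_0$-integral: differentiating the integral over the $t$-dependent domain produces a boundary term on $\{|y-x|=t-t_0\}$ with the kernel $\big((t-t_0)^2-|y-x|^2\big)^{-1/2}$ evaluated exactly where it blows up, so one must either integrate by parts in $y$ first to move the singularity onto $u_0$ (and its gradient), or rescale $y-x=(t-t_0)z$ to freeze the domain and differentiate cleanly; tracking that the resulting terms are still of the same $\lesssim t_0 s^{-1}$ form is the one genuinely delicate computation. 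Everything else is the area-versus-kernel-size comparison described above, which is routine once the geometry $|y-x|\le r+t_0$ and $s^2=(t-r)(t+r)$ is in place.
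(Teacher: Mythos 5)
Your overall route is exactly the paper's: the lemma is proved by a direct computation from the $2{+}1$--dimensional Poisson representation, and your treatment of the $\del_t$--term (rescale $y-x=(t-t_0)z$ to freeze the domain, producing terms in $(t-t_0)^{-1}u_0$, $\nabla u_0$ and $u_1$ against the same kernel, which is where the hypothesis on $|\del u_0|$ enters) is the standard and correct reduction. The interior regime is also fine: when $t-r\gtrsim t_0$, the inequality $|y-x|\le r+t_0-1$ gives $t-t_0-|y-x|\gtrsim t-r$ while $t-t_0+|y-x|\simeq t$, so the kernel is $\lesssim ((t-r)t)^{-1/2}\lesssim s^{-1}$ and the area $\lesssim t_0^2$ of the support closes that case.

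The genuine gap is the near-cone regime $t-r\lesssim t_0$, precisely the case you flag: there the singular circle $\{|y-x|=t-t_0\}$ crosses the support ball, and your remedy --- that the kernel on the relevant piece is ``comparable to its value at a generic interior point of the support'' --- is not true (the kernel blows up on that circle), nor does the identity $\int_D((t-t_0)^2-|y-x|^2)^{-1/2}dy=2\pi(t-t_0)$ by itself give anything after restriction to the support. The correct repair is to integrate in polar coordinates centred at $x$: the angular aperture of $\{|y|<t_0-1\}$ seen from $x$ is $O(t_0/r)$, and for each direction the radial integral $\int\rho\,((t-t_0)^2-\rho^2)^{-1/2}d\rho$ over the window where the ray meets the support (a window of length $O(t_0)$ with $t-t_0-\rho\lesssim t_0$ there) is $\lesssim\sqrt{t_0t}$; hence the model integral is $\lesssim t_0^{3/2}t^{1/2}/r\lesssim t_0^{3/2}t^{-1/2}$, which, since $s^2=(t-r)(t+r)\lesssim t_0t$ in this regime, is $\lesssim t_0^2s^{-1}$ (the remaining cases $r\le t/2$ or $t\lesssim t_0$ follow from the trivial bound $2\pi(t-t_0)$ for the full-disk integral). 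Note that this honest bookkeeping --- and already your own interior estimate, area $t_0^2$ times kernel size $s^{-1}$ --- yields $CC_It_0^2s^{-1}$, not $CC_It_0s^{-1}$: the $t_0^1$ power you announce cannot be extracted with a universal constant (take $u_0=0$, $u_1\equiv C_I$ on the ball and evaluate at $x=0$, where $u(t,0)\simeq C_It_0^2/(2t)$ while $s=t$), so that part of your step two is an artifact of the unjustified comparability claim rather than something a careful version of your argument would deliver. This discrepancy is harmless for the paper, which applies the lemma only with $t_0=2$, but as written the decisive near-boundary estimate, and with it the stated $t_0$--dependence, is not established.
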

This is a classical result. In \cite{M-2020-strong} we showed a proof.

\paragraph*{$L^{\infty}$ estimate on wave equation based on integration along hyperbolas.}
We also need the following bounds to establish the sharp decay bounds without uniformly energy bounds. This is established in \cite{M-2020-strong}. We recall the following curves:
$$
\aligned
\gamma_{t,x}: \RR&\rightarrow \RR^{2+1}
\\
\tau&\rightarrow \big(\gamma_{t,x}^0(\tau),\gamma_{t,x}^1(\tau),\gamma_{t,x}^2(\tau)\big)
\endaligned
$$
with
$$
\gamma_{t,x}^0(\tau) = \tau,\quad \gamma_{t,x}^a(\tau) =  (x^a/r)\left(\sqrt{\tau^2+\frac{1}{4}C_{t,x}^2} - \frac{1}{2}C_{t,x}\right)
$$
where
$$
C_{t,x} = \frac{t^2-r^2}{r}.
$$
These are (time-like) hyperbolas with center at $(0,-\frac{x^a}{2r}C_{t,x})$ and hyperbolic radius $\frac{1}{2}C_{t,x}$. 
Then we recall the following estimate:
\begin{proposition}\label{prpo2 wave-sharp}
	Let $u$ be a sufficiently regular function defined in $\Hcal_{[s_0,s_1]}$, vanishes near $\del \Kcal = \{r=t-1\}$. 
	Then the following bound holds:
	\begin{equation}\label{eq1-29-05-2020}
	|s\del_t u(t,x)|\leq Cs_0\|\del_t u\|_{L^{\infty}(\Hcal_{s_0})} 
	+  C\bigg|\int_{s_0}^t W_{t,x}[u](\tau) e^{-\int_{\tau}^t P_{t,x}(\eta)d\eta} d\tau\bigg|
	\end{equation}
	where 
	$$
	W_{t,x}(\tau) := S^w[u]\Big|_{\gamma(\tau;t,x)} + \Delta^w[u]\Big|_{\gamma(\tau;t,x)}
	$$
	and
	$$
	P_{t,x}(\tau) := P\Big|_{\gamma(\tau;t,x)}. 
	$$
	with
$$
\aligned
&P(t,r) := \frac{t^2}{t^2+r^2}p(t,r) = \frac{t-r}{t^2+r^2}(1+(3r/2t)) \geq \frac{1}{4}(s/t)^2t^{-1},
\\
&S^w[u] := t^{1/2}(t-r)^{1/2}\frac{t^2\Box u}{t^2+r^2},\quad 
\Delta^w[u] := t^{1/2}(t-r)^{1/2}\frac{t^2\sum_a\delu_a\delu_a u}{t^2+r^2}.
\endaligned
$$
\end{proposition}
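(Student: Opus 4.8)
The plan is to reduce \eqref{eq1-29-05-2020} to the integration of a first order linear ODE for a rescaled version of $\del_t u$ along the hyperbolas $\gamma_{t,x}$.

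\textbf{Geometry of the curves.} First I would record the basic properties of $\gamma_{t,x}$. A direct differentiation of the defining formulas shows that along $\gamma_{t,x}$ the quantity $C_{t,x}=(t^2-r^2)/r$ is constant, that $s=\sqrt{t^2-r^2}$ and $t-r$ are strictly increasing in $\tau$, and that $\gamma_{t,x}$ is an integral curve of the timelike vector field
$$
X:=\del_t+\frac{2tr}{t^2+r^2}\,\del_r,\qquad\text{so that}\qquad \frac{d}{d\tau}\big(f\circ\gamma_{t,x}\big)=(Xf)\circ\gamma_{t,x}.
$$
Since $t-r$ is increasing in $\tau$ (indeed $\tfrac{d}{d\tau}(t-r)=(\tau-\rho)^2/(\tau^2+\rho^2)\ge0$ along $\gamma$), traced backwards from $\tau=t$ the curve leaves $\Hcal_{[s_0,s_1]}$ either through $\del\Kcal$, where $u$ and all its derivatives vanish by hypothesis, or through $\Hcal_{s_0}$, and in the latter case it does so at a parameter $\tau_0$ with $\tau_0=\sqrt{s_0^2+r(\tau_0)^2}\ge s_0$. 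This fixes the lower endpoint of the integration and the meaning of the boundary term in \eqref{eq1-29-05-2020}.

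\textbf{The transport identity (the crux).} Working in polar coordinates, so that $\Box u=\del_t^2u-\del_r^2u-r^{-1}\del_r u-r^{-2}\del_\omega^2 u$, I would introduce a good unknown $V$ built from $\del_t u$ with a weight comparable to $s$ on $\Kcal$ (since $t\le t+r\le 2t$ there), chosen exactly so that, after decomposing $X$ into null derivatives, $X=\tfrac{(t+r)^2}{2(t^2+r^2)}(\del_t+\del_r)+\tfrac{(t-r)^2}{2(t^2+r^2)}(\del_t-\del_r)$, and reorganizing the radial part $\del_t^2u-\del_r^2u-r^{-1}\del_r u$ of $\Box u$ along $X$, the transverse/angular second order terms that are produced are precisely those collected in $\Delta^w[u]=t^{1/2}(t-r)^{1/2}\tfrac{t^2\sum_a\delu_a\delu_a u}{t^2+r^2}$, while every remaining lower order term assembles into $P\,V$ with $P$ as displayed. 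The target is the exact identity
$$
XV+P\,V=S^w[u]+\Delta^w[u]\qquad\text{on }\Hcal_{[s_0,s_1]}.
$$
Getting the coefficient $P$ and the right-hand side exactly — i.e. verifying that all leftover first and zeroth order contributions combine into $P\,V$ and nothing else — is the main computational obstacle; it is the $2{+}1$-dimensional analogue of the classical ``multiply by a power of $r$ and integrate along characteristics'' computation, adapted from straight null rays to the curves $\gamma_{t,x}$ so that the backward flow reaches $\Hcal_{s_0}$ rather than $\del\Kcal$ prematurely.

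\textbf{Integration of the ODE.} One first checks the elementary inequality $P(t,r)=\frac{t-r}{t^2+r^2}\big(1+\frac{3r}{2t}\big)\ge\frac14(s/t)^2t^{-1}>0$ on $\Kcal$, which after cancelling $t-r>0$ and clearing denominators reduces to $3t^3+5t^2r-tr^2-r^3\ge0$, true since $r<t$. Restricting the transport identity to $\gamma_{t,x}$ gives, with $v(\tau):=V\circ\gamma_{t,x}(\tau)$, the scalar ODE $v'(\tau)+P_{t,x}(\tau)v(\tau)=W_{t,x}(\tau)$; multiplying by the integrating factor $\exp\!\big(\int^{\tau}P_{t,x}\big)$ and integrating from $\tau_0$ to $\tau=t$ yields
$$
v(t)=v(\tau_0)\,e^{-\int_{\tau_0}^{t}P_{t,x}}+\int_{\tau_0}^{t}W_{t,x}(\tau)\,e^{-\int_{\tau}^{t}P_{t,x}(\eta)\,d\eta}\,d\tau.
$$
Since $P_{t,x}\ge0$ the factor $e^{-\int_{\tau_0}^{t}P_{t,x}}$ is at most $1$; using $|v(t)|\simeq|s\del_t u(t,x)|$, that $v(\tau_0)=0$ when $\gamma_{t,x}$ exits through $\del\Kcal$ and $|v(\tau_0)|\le Cs_0\|\del_t u\|_{L^\infty(\Hcal_{s_0})}$ when it exits through $\Hcal_{s_0}$ (because there $V\simeq s_0\del_t u$), and that $\tau_0\ge s_0$ so the range of the last integral may be enlarged to $[s_0,t]$ at the cost of the constant, one obtains exactly \eqref{eq1-29-05-2020}. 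The pointwise bound $P\ge\frac14(s/t)^2t^{-1}$ is also retained, as it is what makes this proposition useful in the later decay estimates.
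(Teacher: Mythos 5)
Your skeleton is the right one, and it is the same mechanism behind the proposition (which this paper does not prove itself but imports from \cite{M-2020-strong}): the curves $\gamma_{t,x}$ are integral curves of $X=\del_t+\frac{2tr}{t^2+r^2}\del_r$, one transports a weight-$s$ version of $\del_t u$ along them, the sign $P\geq 0$ makes the exponential factor harmless, and the backward flow exits $\Hcal_{[s_0,s_1]}$ either through $\del\Kcal$ (where the data vanish) or through $\Hcal_{s_0}$ (producing the $s_0\|\del_t u\|_{L^{\infty}(\Hcal_{s_0})}$ term). Your geometric checks ($C_{t,x}$ conserved, $X(t-r)=\frac{(t-r)^2}{t^2+r^2}\geq0$, the exit analysis) and your verification of $P\geq\frac14(s/t)^2t^{-1}$ are correct. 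The one genuine gap is that the crux of the proposition — the exact transport identity $XV+PV=S^w[u]+\Delta^w[u]$, with the precise coefficient $P$ and the precise remainder $\Delta^w[u]$ — is only announced as a ``target'': you never fix the unknown $V$ beyond ``a weight comparable to $s$'', and your polar-coordinate description suggests the remainder consists of angular terms, whereas $\Delta^w[u]$ involves the full hyperbolic Hessian $\sum_a\delu_a\delu_a u$ (the terms $\delu_a=\del_a+(x^a/t)\del_t$ are not purely angular). Since the exact form of $P$ and of the remainder is precisely what the later sharp-decay arguments consume, the proposal is incomplete until this identity is exhibited.

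The gap does close, with the choice forced by the shape of $S^w$, namely $V:=t^{1/2}(t-r)^{1/2}\del_t u$. Indeed, from $\delu_a=\del_a+(x^a/t)\del_t$ one computes $\sum_a\delu_a\delu_a u=\Delta u+\frac{2x^a}{t}\del_t\del_a u+\frac{r^2}{t^2}\del_t^2u+\big(\frac{2}{t}-\frac{r^2}{t^3}\big)\del_t u$, whence
\begin{equation*}
\frac{t^2\,\Box u}{t^2+r^2} \;=\; X(\del_t u)\;+\;\frac{2t^2-r^2}{t(t^2+r^2)}\,\del_t u\;-\;\frac{t^2}{t^2+r^2}\sum_a\delu_a\delu_a u .
\end{equation*}
Since $Xt=1$ and $X(t-r)=\frac{(t-r)^2}{t^2+r^2}$, one has $X\big(t^{1/2}(t-r)^{1/2}\big)=\big[\frac{1}{2t}+\frac{t-r}{2(t^2+r^2)}\big]t^{1/2}(t-r)^{1/2}$, so multiplying the last display by $t^{1/2}(t-r)^{1/2}$ and absorbing this derivative of the weight gives $XV+PV=S^w[u]+\Delta^w[u]$ with
\begin{equation*}
P=\frac{2t^2-r^2}{t(t^2+r^2)}-\frac{1}{2t}-\frac{t-r}{2(t^2+r^2)}
=\frac{(t-r)(2t+3r)}{2t(t^2+r^2)}
=\frac{t-r}{t^2+r^2}\Big(1+\frac{3r}{2t}\Big),
\end{equation*}
exactly the coefficient in the statement. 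With this identity in hand, the rest of your argument (integrating factor, $P_{t,x}\geq0$, the two exit scenarios, and $t^{1/2}(t-r)^{1/2}\simeq s$ on $\Kcal$) goes through as written and yields \eqref{eq1-29-05-2020}.
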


\subsection{Linear estimate on Klein-Gordon equation}

\paragraph*{Conical decay of Klein-Gordon component.}
As explained before, one of the important techniques we applied in this paper is "paying conical for principle" (see \cite{M-2020-strong}), hence we need the following proposition describing the conical decay of Klein-Gordon component.
\begin{proposition}\label{prop1-fast-kg}
	Let $v$ be a sufficiently regular solution to
	\begin{equation}\label{eq1 prop fast-KG}
	\Box v + c^2 v = f.
	\end{equation}
	Then 
	\begin{equation}\label{eq2 prop fast-KG}
	c^2|v|_{p,k}\leq C(s/t)^2|\del v|_{p+1,k+1} + C|f|_{p,k}.
	\end{equation}
\end{proposition}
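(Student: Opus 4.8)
The plan is to reduce the statement to the base case $p=k=0$ by commuting vector fields with the operator $\Box+c^2$, and then to prove that case by decomposing $\Box$ in the semi-hyperboloidal frame and using two facts: each hyperbolic derivative $\delu_a = t^{-1}L_a$ carries an extra factor $t^{-1}$, and $t^{-1}\le (s/t)^2$ throughout $\Kcal$.

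\textbf{Reduction.} Any $Z^K$ with $K\in\mathcal{I}_{p,k}$ is a product of the fields $\del_\alpha$ and $L_a$ only (no hyperbolic derivatives enter $\mathcal{I}_{p,k}$), and both of these commute with $\Box$; hence $\Box(Z^Kv)+c^2Z^Kv = Z^Kf$ for every such $K$. Granting the case $p=k=0$, I apply it to $Z^Kv$ and take the maximum over $K\in\mathcal{I}_{p,k}$: since $|Z^Kf|\le |f|_{p,k}$, it suffices to know $\max_{K\in\mathcal{I}_{p,k}}|\del(Z^Kv)|_{1,1}\le C|\del v|_{p+1,k+1}$, which follows from the definitions after commuting the $\del_\alpha$'s and $L_a$'s into standard order, the commutators $[\del_\alpha,L_a]$ and $[L_a,L_b]$ being of no greater order or boost-count and of admissible type. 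This part is routine bookkeeping.

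\textbf{Base case.} From the equation, $c^2v = f-\Box v$. Using $\del_\alpha = \Psiu_\alpha^\beta\delu_\beta$ one obtains the semi-hyperboloidal decomposition
\[
\Box v = -\Big(\tfrac{s}{t}\Big)^2\del_t\del_t v \;-\; \tfrac{2x^a}{t}\del_t\delu_a v \;+\; \sum_a\delu_a\delu_a v \;+\; R,
\]
where $R$ collects first-order terms of the schematic form $O(t^{-1})\delu_\beta v$, so $|R|\le Ct^{-1}|\del v|$. The leading term obeys $|(s/t)^2\del_t\del_t v|\le (s/t)^2|\del v|_{1,0}$. The two middle terms have only $O(1)$ coefficients and are the heart of the matter; writing $\delu_a=t^{-1}L_a$,
\[
\del_t\delu_a v = t^{-1}\del_tL_av - t^{-2}L_av,\qquad \delu_a\delu_b v = t^{-2}L_aL_bv - t^{-3}x^aL_bv,
\]
and then lowering the boost-count through $\del_tL_av = L_a\del_tv+\del_av$ and $L_aL_bv = t\delta_{ab}\del_tv + x^bL_a\del_tv + x^a\del_bv + tL_a\del_bv$; since $|x^a|\le r<t$ in $\Kcal$, $|L_bv|\le Ct|\del v|$ and $|L_a\del_\gamma v|\le |\del v|_{1,1}$, this gives $|\del_t\delu_a v|+|\delu_a\delu_b v|\le Ct^{-1}|\del v|_{1,1}$. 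Collecting the pieces,
\[
c^2|v|\le |f| + \Big(\tfrac{s}{t}\Big)^2|\del v|_{1,0} + Ct^{-1}|\del v|_{1,1}.
\]

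\textbf{Conclusion.} In $\Kcal$ one has $t-r\ge 1$ and $t+r\ge t$, hence $s^2=(t-r)(t+r)\ge t$, i.e. $t^{-1}\le (s/t)^2$; inserting this yields $c^2|v|\le C(s/t)^2|\del v|_{1,1}+C|f|$, which is the case $p=k=0$, and the reduction step finishes the proof. The single genuinely non-trivial point is the estimate of the two middle terms in the decomposition: their coefficients $x^a/t$ and $1$ are not themselves small, and the decisive weight $(s/t)^2$ is recovered only after each $\delu_a$ is traded for $t^{-1}L_a$ and the boost-order of $L_aL_bv$ and $\del_tL_av$ is lowered at the cost of harmless factors of $t$, which the accumulated $t^{-2}$ absorbs.
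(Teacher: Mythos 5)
Your proof is correct and follows essentially the route the paper itself points to (via the cited reference and the link to Proposition \ref{prop1-14-08-2020}): write $c^2v=f-\Box v$, decompose $\Box$ in the semi-hyperboloidal frame so that the only term without a hyperbolic derivative carries the coefficient $(s/t)^2$, trade each $\delu_a$ for $t^{-1}L_a$ and use $t^{-1}\leq (s/t)^2$ in $\Kcal$, then commute $\del^IL^J$ through $\Box+c^2$ for the higher-order version. The only cosmetic slip is the remark that $[L_a,L_b]$ is "of admissible type" --- it is the rotation $x^a\del_b-x^b\del_a$, which is not in $\mathscr{Z}$ --- but your reduction never actually needs it (only the commutators $[\del_\alpha,L_b]$, which are partial derivatives, arise when pushing one $\del$ to the rightmost slot), and in any case $x^a\del_b-x^b\del_a=t^{-1}\big(x^aL_b-x^bL_a\big)$ is controlled by the boosts inside $\Kcal$.
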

Here remark the factor $(s/t)^2$ in right-hand-side. This bound is closely related to the Proof of Proposition \ref{prop1-14-08-2020}. A sketch of proof can be found in \cite{M-2020-strong}.

\paragraph*{$L^{\infty}-L^{\infty}$ estimate on Klein-Gordon component.}
Now, we reformulate the $L^\infty-L^\infty$ estimate on Klein-Gordon component for the sharp decay. Before the main statement, we introduce the following curves:
$$
\aligned
\varphi_{t,x}: \RR &\rightarrow \{(t',x')\in \RR^{2+1}, t'>0\}
\\
\lambda &\rightarrow (\lambda t/s,\lambda x/s)
\endaligned
$$
which are the half-lines from $(0,0)$ to $(t,x)$. They are the integral curves of $\Lcal = (s/t)^{-1}\big(\del_t + (x^a/t)\del_a\big)$.  For each $(t,x)\in \Hcal_{[s_0,s_1]}$, there exists a $(t_0,x_0)$ such that $(t_0,x_0)\in \varphi_{t,x}$ and $(t_0,x_0)\in \Hcal_{s_0}^*\cup \del \Kcal$. Here $\Hcal_{s_0}^* = \Hcal_{s_0}\cap\Kcal$ is the part of $\Hcal_{s_0}$ in the cone $\Kcal$. Then we state the main result:
\begin{proposition}\label{lem1'-01-09-2020}
	Suppose that $v$ is a $C^2$ solution to the following Klein-Gordon equation:
	\begin{equation}\label{eq'14-01-09-2020}
	\Box v + c^2(1-\omega)v = f
	\end{equation}
	in $\Hcal_{[s_0,s_1]}$, vanishes near $\del K$ with $s_0\geq 2$. Suppose that $\omega$ and $f$ $C^1$ functions defined in $\Hcal_{[s_0,s_1]}$, vanish near $\del\Kcal$ with $|w_0|\leq 1/2$. Then for $(t,x)\in \Hcal_{[s_0,s_1]}$ with $0\leq r/t\leq 3/5$
	\begin{equation}
	\aligned
	s|v|(t,x) + &s|((s/t)\del_t + (x^a/s)\delu_a)v|(t,x)
	\\
	\leq&
	Cs_0\sup_{\Hcal_{s_0}}\{|v| + |\del v|\} 
	+ C\int_{s_0}^s\lambda\big(|f| + \lambda^{-2}|v|_{2,2}\big)\big|_{\varphi_{t,x}(\lambda)} d\lambda
	\\
	&+ C\int_{s_0}^s\big(\lambda|\del v| + \lambda|v| + |v|_{1,1}\big)|\del \omega|\big|_{\varphi_{t,x}(\lambda)}d\lambda
	\endaligned
	\end{equation}
	and for $(t,x)\in \Hcal_{[s_0,s_1]}$ with $3/5\leq r/t<1$,
	\begin{equation}
	\aligned
	s|v|(t,x)& + s|((s/t)\del_t + (x^a/s)\delu_a)v|(t,x) 
	\\
	\leq&
	C\int_{\lambda_0}^s\lambda\big(|f| + \lambda^{-2}|v|_{2,2}\big)\big|_{\varphi_{t,x}(\lambda)} d\lambda
	\\
	&+ C(s/t)^{-1}\int_{\lambda_0}^s\big((s/t)\lambda|\del v| + \lambda|v| + |v|_{1,1}\big)\big((s/t)^2|\del \omega| + |\dels \omega|)\big|_{\varphi_{t,x}(\lambda)}d\lambda.
	\endaligned
	\end{equation}
with $\lambda_0 = \sqrt{\frac{t+r}{t-r}}\geq \sqrt{2}(t/s)$.
\end{proposition}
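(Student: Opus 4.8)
\emph{Proof strategy.} The idea is to restrict the equation \eqref{eq'14-01-09-2020} to the rays $\varphi_{t,x}$, turn it into a one–dimensional perturbed harmonic oscillator for a suitably rescaled unknown, and close the estimate by a scalar energy argument along the ray. \textbf{Step 1 (reduction to an ODE).} In the interior of the light cone the Minkowski metric is the exact warped product $-ds^2 + s^2 g_{H^2}$, and $\varphi_{t,x}$ is the integral curve of $\partial_s=\Lcal$ through $(t,x)$, reaching $(t,x)$ at parameter $\lambda=s$. Hence along the ray $\Box = -\partial_\lambda^2 - \tfrac{2}{\lambda}\partial_\lambda + \tfrac{1}{\lambda^2}\Delta_{H^2}$, where $\Delta_{H^2}$ involves only derivatives tangent to the hyperboloids. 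Writing $v$ also for $v\circ\varphi_{t,x}$ and substituting $w(\lambda):=\lambda\,v(\lambda)$ — which in two space dimensions kills the first–order term \emph{and} leaves no residual potential — equation \eqref{eq'14-01-09-2020} becomes, along $\varphi_{t,x}$,
\[
w'' + c^2(1-\omega)\,w = \widetilde{\mathcal R}(\lambda),\qquad \widetilde{\mathcal R} := -\lambda f + \tfrac{1}{\lambda}\Delta_{H^2}v .
\]
I also note $(s/t)\del_t + (x^a/s)\delu_a = \Lcal = \partial_\lambda$ along the ray (a direct computation, using $t^2=s^2+r^2$), so that $s\big((s/t)\del_t + (x^a/s)\delu_a\big)v = \lambda v' = w'-v$ and $sv=w$; thus the left–hand side of the Proposition is controlled once $|w|$ and $|w'|$ are.

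\textbf{Step 2 (scalar energy).} Put $E(\lambda):=|w'|^2 + c^2(1-\omega)|w|^2$. Since $|\omega|\le 1/2$ one has $1-\omega\in[1/2,3/2]$, hence $E\simeq |w'|^2 + c^2|w|^2$. Differentiating and using the ODE, $\tfrac{d}{d\lambda}E = 2w'\widetilde{\mathcal R} - c^2\omega'w^2$, so $\big|\tfrac{d}{d\lambda}\sqrt E\big|\le |\widetilde{\mathcal R}| + |\omega'|\sqrt E$. Integrating from a base parameter $\lambda_0$ to $\lambda=s$, \emph{without} resolving the last term,
\[
\sqrt{E(s)} \le \sqrt{E(\lambda_0)} + \int_{\lambda_0}^{s}|\widetilde{\mathcal R}|\,d\lambda + \int_{\lambda_0}^{s}|\omega'|\sqrt E\,d\lambda .
\]
\textbf{Step 3 (the base point).} Tracing $\varphi_{t,x}$ backwards, $\varphi_{t,x}(\lambda)\in\Hcal_{[s_0,s_1]}$ precisely for $\max\{s_0,(t+r)/s\}\le\lambda\le s_1$, and it meets $\Hcal_{s_0}^*\cup\del\Kcal$ at hyperbolic radius $\lambda_0=\max\{s_0,(t+r)/s\}$. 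With $s_0=2$ one has $(t+r)/s\le 2\iff r/t\le 3/5$: for $0\le r/t\le 3/5$ the exit point lies on $\Hcal_{s_0}^*$ at $\lambda=s_0$, where $\sqrt{E(\lambda_0)}\lesssim |w'(s_0)|+c|w(s_0)|\lesssim s_0\sup_{\Hcal_{s_0}}\{|v|+|\del v|\}$ (using $|L_av|\le(r+t)|\del v|$ and $t\lesssim s_0$ there); for $3/5\le r/t<1$ the exit point lies on $\del\Kcal$, so $v,\del v$ vanish, $E(\lambda_0)=0$, and $\lambda_0=(t+r)/s=\sqrt{(t+r)/(t-r)}\ge\sqrt2\,(t/s)$.

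\textbf{Step 4 (estimating the integrands and concluding).} For the source, $\tfrac1{\lambda^2}\Delta_{H^2}v$ is the intrinsic Laplacian $\Delta_{\Hcal_\lambda}v$; expressing hyperboloid–tangent fields via $\delu_a=L_a/t$ and using $\lambda=s$, $t^2=s^2+r^2$ gives $|\Delta_{\Hcal_\lambda}v|\lesssim\lambda^{-2}|v|_{2,2}$, hence $|\widetilde{\mathcal R}|\lesssim \lambda\big(|f| + \lambda^{-2}|v|_{2,2}\big)$. For the coefficient, $\omega'=\Lcal\omega=(s/t)\del_t\omega+(x^a/s)\delu_a\omega$, so $|\omega'|\lesssim (s/t)|\del\omega| + (t/s)|\dels\omega| = (s/t)^{-1}\big((s/t)^2|\del\omega|+|\dels\omega|\big)$, which is $\lesssim|\del\omega|$ when $r/t\le 3/5$ (there $s/t\ge 4/5$). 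Finally $\sqrt E\simeq|w'|+c|w|$ with $|w|=\lambda|v|$ and $|w'|=|v+\lambda\Lcal v|\lesssim \lambda|v| + (s/t)\lambda|\del v| + |v|_{1,1}$ (using $|\delu_av|\le|v|_{1,1}/t$), so $\sqrt E\lesssim (s/t)\lambda|\del v|+\lambda|v|+|v|_{1,1}$, which simplifies to $\lambda|\del v|+\lambda|v|+|v|_{1,1}$ when $r/t\le 3/5$. Substituting these three bounds into Step 2, noting that $s/t$ is constant along $\varphi_{t,x}$ (so the prefactor $(s/t)^{-1}$ pulls out of the integral in the second regime), and using $sv=w$, $s\big((s/t)\del_t+(x^a/s)\delu_a\big)v=w'-v$ from Step 1, produces exactly the two claimed estimates.

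\emph{Main obstacle.} The crux is Step 1 together with the $\Delta_{H^2}$ bookkeeping in Step 4: one must split $\Box$ sharply along the rays, use that two space dimensions is the clean case for the rescaling $w=\lambda v$, and convert the tangential Hessian into $|v|_{2,2}$ with \emph{precisely} the weight $\lambda^{-2}$ (any loss here would destroy the $s^{-2}$ integrability exploited in the applications). The geometric case split at $r/t=3/5$ and the identification of $\lambda_0$ are then routine, as is the scalar energy argument of Step 2.
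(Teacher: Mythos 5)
Your proposal is correct and follows essentially the same route as the paper: restrict the equation to the rays $\varphi_{t,x}$, rewrite it as the perturbed oscillator $\frac{d^2}{d\lambda^2}(\lambda v)+c^2(1-\omega)\lambda v=\lambda f+(\text{tangential Hessian terms})$, split at $r/t=3/5$ according to whether the ray enters through $\Hcal_{s_0}$ or $\del\Kcal$ (with $\lambda_0=(t+r)/s=\sqrt{(t+r)/(t-r)}$), bound the tangential part by $\lambda^{-1}|v|_{2,2}$ via $\delu_a=t^{-1}L_a$, and bound $|\Lcal\omega|$ by $(s/t)|\del\omega|+(s/t)^{-1}|\dels\omega|$; your scalar-energy treatment of the variable coefficient is simply a cleaner way of producing the paper's ODE inequality \eqref{eq9-08-10-2020}, which the paper attributes to ``basic ODE theory''. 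The only blemish is a sign-convention slip in Step 1: with your geometric convention $\Box=-\del_t^2+\Delta$ the reduced equation would read $w''-c^2(1-\omega)w=\cdots$, whereas with the paper's convention $\Box=\del_t^2-\sum_a\del_a^2$ one has $\Box=\del_s^2+\tfrac{2}{s}\del_s-s^{-2}\Delta_{H^2}$ and the ODE comes out as you state it (with $+\lambda f$), a discrepancy that is immaterial since only $|f|$ enters the estimates.
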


\begin{proof}[Sketch of proof]
	Recall the following decomposition
	$$
	\Box v = s^{-1}\big((s/t)\del_s + (x^a/s)\delu_a\big)^2(sv) - \frac{x^ax^b}{s^2}\delu_a\delu_b v - \sum_{a}\delu_a\delu_a v .
	$$ 
	Then \eqref{eq'14-01-09-2020} can be written into the following form:
	\begin{equation}\label{eq'16-01-09-2020}
	\Lcal^2(sv) + c^2(1-\omega)sv = sf + s\big(s^{-2}x^ax^b\delu_a\delu_b + \sum_a\delu_a\delu_a \big)v
	\end{equation}
	where $\Lcal = (s/t)\del_t + (x^a/s)\delu_a = (s/t)^{-1}\big(\del_t  + (x^a/t)\del_a\big)$. This can be regarded as an ODE of $sv$ along the integral curve of $\Lcal$, which are segments. Let $\varphi_{t,x}(\cdot)$ be one of its integral curve such that $\varphi_{t,x}(s) = (t,x)$ with $s = \sqrt{t^2-r^2}$. Then
	$$
	\varphi^0_{t,x}(\lambda) = (t/s)\lambda,\quad \varphi^a_{t,x}(\lambda) = (x^a/s)\lambda.
	$$
	Let $u$ be a sufficiently regular function defined in $\Hcal_{[s_0,s_1]}$, and 
	$$
	u_{t,x}(\lambda):= u|_{\varphi_{t,x}(\lambda)} = u\big((t/s)\lambda, (x^a/s)\lambda\big),
	$$
	then 
	$$
	u'_{t,x}(\lambda) = \frac{d}{d\lambda}u_{t,x}(\lambda) = \Lcal u \big((t/s)\lambda, (x^a/s)\lambda\big) = (\Lcal u)|_{\varphi_{t,x}(\lambda)}.
	$$
	With these observation, \eqref{eq'16-01-09-2020} is written as 
	\begin{equation}\label{eq'17-01-09-2020}
	V_{t,x}''(\lambda) + c^2(1-\omega)V_{t,x}(\lambda) = \lambda \big(f + s^{-2}x^ax^b\delu_a\delu_bv + \sum_a\delu_a\delu_av \big)\big|_{\varphi_{t,x}(\lambda)}
	\end{equation}
	where $V_{t,x}(\lambda) : = (sv)|_{\varphi_{t,x}}(\lambda)$. Here we also remarked that for $(t',x')\in \varphi_{t,x}(\lambda)$ and $s' = \sqrt{|t'|^2 - |x'|^2}$, 
	$$
	s'|_{\varphi_{t,x}} = \lambda'
	$$
	with $t' = \lambda' t/s, x' = \lambda' x/s$.
	
	Then we make an observation on the integral curves $\varphi_{t,x}$. They are half-lines form $(0,0)$ to $(t,x)\in  \Hcal_{[s_0,s_1]}$. Recall that $(t_0,x_0)$ is the point where $\varphi_{t,x}$ enters $\Hcal_{[s_0,s_1]}$. Direct calculation shows that:
	\\
	- when $0\leq r/t\leq 3/5$, $(t_0,x_0)\in \Hcal_{s_0}$. Let $(t_0,x_0) = \varphi_{t,x}(\lambda_0)$, then $\lambda_0=2$,
	\\
	- when $3/5\leq r/t <1$, $(t_0,x_0)\in\del\Kcal = \{t=r+1\}$ and $\lambda_0 = \sqrt{\frac{t+r}{t-r}}\geq \sqrt{2}t/s$.
	
	Now for a fixed $(t,x)\in \Hcal_s^*\subset \Hcal_{[s_0,s_1]}$ we integrate \eqref{eq'17-01-09-2020}. Remark that that $|\omega|\leq 1/2$, the eigenvalues of the characteristic polynomial are purely imaginary and the eigenvectors are uniformly bounded. So by basic ODE theory we arrive at the following bound:
	\begin{equation}\label{eq9-08-10-2020}
	\aligned
	|V_{t,x}'(s)| + |V_{t,x}(s)|\leq& |V_{t,x}'(\lambda_0)| + |V_{t,x}(\lambda_0)| 
	\\
	&+ C\int_{\lambda_0}^s\lambda \big(|f| + |s^{-2}x^ax^b\delu_a\delu_bv| + \sum_a|\delu_a\delu_av|\big)\big|_{\varphi_{t,x}(\lambda)}d\lambda 
	\\
	& + C\int_{\lambda_0}^s(|V_{t,x}(\lambda)| 
	+ |V'_{t,x}(\lambda)|) |\Lcal \omega|\big|_{\varphi_{t,x}(\lambda)}d\lambda.
	\endaligned
	\end{equation}
	Then  remark that
	\\
	1. $(s/t)$ and $(r/t)$ are constant along $\varphi_{t,x}$ and when $0\leq r/t\leq 3/5$, $4/5\leq s/t\leq 1$, i.e., we can omit all factors $(s/t)$ (regarded as $1$).
	\\
	2. When $0\leq r/t\leq 3/5$, $|V_{t,x}(\lambda_0)| + |V_{t,x}'(\lambda_0)|\leq Cs_0\sup_{\Hcal_{s_0}}\{|v| + |\del v|\}$,
	\\
	3. When $3/5\leq r/t<1$, $|V_{t,x}(\lambda_0)| + |V_{t,x}'(\lambda_0)|=0$.
	
Furthermore,
$$
\aligned
|V_{t,x}(\lambda)| + |V_{t,x}'(\lambda)|\leq& (\lambda + 1)|v(\lambda t/s,\lambda x/s)| +  \lambda(s/t)|\del_t v(\lambda t/s,\lambda x/s)| 
\\
&+ C(s/t)^{-1}\lambda |\dels v(\lambda t/s,\lambda x/s)|
\\
\leq& C(\lambda + 1)|v|_{\varphi_{t,x}(\lambda)} + C\lambda(s/t)|\del v|_{\varphi_{t,x}(\lambda)} + C|L v|_{\varphi_{t,x}(\lambda)},
\endaligned
$$
$$
|\delu_a\delu_b v|\leq Ct^{-2}|v|_{2,2},
$$
$$
|\Lcal \omega| = |(s/t)\del_t \omega + (x^a/s)\delu_a\omega| 
\leq C(s/t)|\del \omega| + C(s/t)^{-1}|\dels \omega|.
$$
Then \eqref{eq9-08-10-2020} is written as 
$$
\aligned
|& sv(t,x)| +|v(t,x) + s\big((s/t)\del_tv + (x^a/s)\delu_a v\big)| 
\\
\leq&  C\int_{\lambda_0}^s\lambda(|f| + \lambda^{-2}|v|_{2,2})\big|_{\varphi_{t,x}(\lambda)}d\lambda
\\
&+ C\int_{\lambda_0}^s\lambda \big(|v| + (s/t)|\del v| + \lambda^{-1}|v|_{1,1}\big)\big((s/t)|\del \omega| + (s/t)^{-1}|\dels\omega|_{1,1}\big)\big|_{\varphi_{t,x}(\lambda)}d\lambda 
\\
&+
\left\{
\begin{aligned}
&Cs_0\sup_{\Hcal_{s_0}} \{|v| + |\del v|\},\quad &&0\leq r/t\leq 3/5,
\\
&0,\quad &&3/5\leq r/t<1.
\end{aligned}
\right.
\endaligned
$$
which concludes the desired result.
\end{proof}

\section{Reformulation of the systems}\label{sec-reformulation}
This section together with the following one are devoted to the model systems \eqref{eq1-main}. In this section we construct the auxiliary system \eqref{eq1-05-09-2020}.
\subsection{Construction of the auxiliary systems}
We get stated with \eqref{eq1a-main}. For any $C^3$ solution $(u,v)$ to \eqref{eq1a-main}, suppose that $w$ is a solution to the following wave equation:
$$
\Box w = v^2.
$$ 
Then 
$$
\Box \big(A^{\alpha}\del_{\alpha}w\big) = A^{\alpha}\del_{\alpha}(v^2),
$$
which shows that $A^{\alpha}\del_{\alpha}w$ and $u$ satisfy the same wave equation. Based on this observation, we make the following reformulation. Let $(u,v)$ be a $C^3$ solution to the Cauchy problem of \eqref{eq1-main} with the initial data
\begin{equation}\label{eq3-03-09-2020}
u(2,x) = u_0(x), \quad \del_t u(2,x) = u_1(x),\quad v(2,x) = v_0(x),\quad \del_tv(2,x) = v_1.
\end{equation}
Then for the following auxiliary Cauchy problem
\begin{equation}\label{eq4-03-09-2020}
\left\{
\aligned
&\Box w = v^2,
\\
&\Box w_0 = 0,
\\
&\Box \tilde{v} + c^2\tilde{v} = B^{\alpha}\tilde{v}\del_{\alpha}(w_0 + A^{\beta}\del_{\beta}w).
\endaligned
\right.
\end{equation} 
with
\begin{equation}\label{eq5-03-09-2020}
\aligned
&w(2,x) = \del_tw(2,x) = 0,\quad \tilde{v}(2,x) = v_0(x),\quad \del_t \tilde{v}(2,x) = v_1(x)
\\
&w_0(2,x) = u_0(x), \quad \del_tw_0(2,x) = u_1(x) - A^0v_0^2(x),
\endaligned
\end{equation}
we can establish the following result:
\begin{lemma}\label{lem1-05-10-2020}
	Let $(u,v)$ be the $C^3$ solution to the Cauchy problem associate to \eqref{eq1-main} with initial data \eqref{eq3-03-09-2020} and $(w,w_0,\tilde{v})$ be the $C^2$ solution to \eqref{eq3-03-09-2020} with initial data \eqref{eq5-03-09-2020}. Then
	\begin{equation}\label{eq1-08-09-2020}
	u = w_0 + A^{\alpha}\del_{\alpha}w,\quad v = \tilde{v}
	\end{equation}
	when both solutions exist. Furthermore, when $(w,w_0,\tilde{v})$ exists, $(u,v)$ defined through \eqref{eq1-08-09-2020} is the solution to \eqref{eq1a-main} with \eqref{eq3-03-09-2020}.
\end{lemma}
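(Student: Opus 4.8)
The plan is to read the lemma as the equivalence of two Cauchy problems — the one for \eqref{eq1a-main} with data \eqref{eq3-03-09-2020}, and \eqref{eq4-03-09-2020} with data \eqref{eq5-03-09-2020} — and to prove it by one direct verification plus one appeal to uniqueness. I would begin with the ``furthermore'' half. Assume $(w,w_0,\tilde v)$ solves \eqref{eq4-03-09-2020} with data \eqref{eq5-03-09-2020} on some $\Hcal_{[2,s_1]}$, and define $(u,v):=(w_0+A^\alpha\del_\alpha w,\tilde v)$ as in \eqref{eq1-08-09-2020}; the task is then to check that this pair solves \eqref{eq1a-main} with data \eqref{eq3-03-09-2020}. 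Granting that, the first assertion drops out at once: the given $C^3$ solution $(u,v)$ of \eqref{eq1a-main} and the pair $(w_0+A^\alpha\del_\alpha w,\tilde v)$ just produced satisfy the same Cauchy problem, hence coincide on the common domain of existence by the standard uniqueness for semilinear wave--Klein--Gordon systems (subtract the two solutions, note that the difference solves a linear homogeneous system with zero Cauchy data, and close a Gr\"onwall inequality for its standard energy).

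Verifying the two equations for $(u,v)$ is immediate. Since $A^\alpha$ are constants, $A^\alpha\del_\alpha$ commutes with $\Box$; applying it to the first line of \eqref{eq4-03-09-2020} and adding the second line gives $\Box u=\Box w_0+A^\alpha\del_\alpha(\Box w)=A^\alpha\del_\alpha(\tilde v^2)=A^\alpha\del_\alpha(v^2)$, which is the wave equation of \eqref{eq1a-main}; and the third line of \eqref{eq4-03-09-2020}, read through the definition $u=w_0+A^\beta\del_\beta w$, is literally $\Box v+c^2v=B^\alpha v\,\del_\alpha u$, the Klein--Gordon equation of \eqref{eq1a-main}. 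It is worth pausing to say why the free wave $w_0$ appears at all: one cannot prescribe Cauchy data for a ``primitive'' $w$ so that $A^\alpha\del_\alpha w$ reproduces $u$ on $\{t=2\}$, so instead one pins $w$ to vanish on $\{t=2\}$ and lets $w_0$ carry exactly the discrepancy in the data.

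The step that takes genuine care — and the one that pins down the precise data in \eqref{eq5-03-09-2020} — is matching the Cauchy data of $u$; this is where I expect the only real bookkeeping, and no deeper obstacle. For $v=\tilde v$ there is nothing to do. For $u$: since $w(2,\cdot)\equiv 0$ and $\del_t w(2,\cdot)\equiv 0$, every first-order derivative of $w$ vanishes on $\{t=2\}$, so $A^\alpha\del_\alpha w(2,\cdot)=0$ and $u(2,\cdot)=w_0(2,\cdot)=u_0$. For the time derivative, $\del_t u(2,\cdot)=\del_t w_0(2,\cdot)+A^\alpha\del_\alpha(\del_t w)(2,\cdot)$; the spatial contributions $A^a\del_a(\del_t w)(2,\cdot)$ vanish because $\del_t w(2,\cdot)\equiv 0$, and the time contribution is computed from the equation itself: from $\Box w=\del_t^2 w-\Delta w=\tilde v^2$ and $\Delta w(2,\cdot)=0$ one gets $\del_t^2 w(2,\cdot)=\tilde v^2(2,\cdot)=v_0^2$, so $A^\alpha\del_\alpha(\del_t w)(2,\cdot)=A^0 v_0^2$. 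Combined with the prescribed $\del_t w_0(2,\cdot)=u_1-A^0 v_0^2$ this gives precisely $\del_t u(2,\cdot)=u_1$. This cancellation is the entire reason for the correction term $-A^0v_0^2$ in \eqref{eq5-03-09-2020} (and is consistent only with the sign convention $\Box=\del_t^2-\Delta$).

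Finally I would record the two standard supporting facts that make the uniqueness invocation (and, downstream, the hyperboloidal energy estimates of Section \ref{sec-tech}) legitimate. First, by finite speed of propagation together with the compact support of the data in $\{|x|<1\}$, each of $w$, $w_0$, $\tilde v$ — and hence $u$ — is supported in $\Kcal=\{r<t-1\}$, in particular vanishing near $\del\Kcal$. Second, one keeps the differentiability bookkeeping consistent: $w$ enters $u$ through one derivative and enters the $\tilde v$-equation through two, so ``the $C^2$ solution of \eqref{eq4-03-09-2020}'' is to be understood with $w$ correspondingly more regular, which the standard local theory for $\Box w=\tilde v^2$ supplies automatically from the regularity of $\tilde v$.
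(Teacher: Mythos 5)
Your proposal is correct and follows essentially the same route as the paper: verify that $\tilde u:=w_0+A^\alpha\del_\alpha w$ satisfies $\Box\tilde u=A^\alpha\del_\alpha(v^2)$, match the Cauchy data at $t=2$ (using $\del_t^2w(2,\cdot)=v_0^2$ from the equation and the correction $-A^0v_0^2$ in the data for $\del_t w_0$), and conclude by uniqueness for \eqref{eq1a-main}. The only difference is presentational (you prove the ``furthermore'' direction first and then invoke uniqueness), which changes nothing of substance.
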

\begin{proof}
	This is an argument based on the uniqueness of \eqref{eq1-main}. In fact we calculate
	\begin{equation}\label{eq8-03-09-2020}
	\Box \big(w_0 + A^{\alpha}\del_{\alpha}u\big) = \Box w_0 + A^{\alpha}\del_{\alpha}\Box w = A^{\alpha}\del_{\alpha}(v^2)
	\end{equation}
	and on the initial slice,
	\begin{equation}\label{eq7-03-09-2020}
	w_0(2,x) + A^{\alpha}\del_{\alpha}w(2,x) = u_0(x).
	\end{equation}
	On the other hand,
	$$
	\del_t(w_0 + A^{\alpha}\del_{\alpha}w) = \del_tw_0 + A^0\del_t\del_tw + A^a\del_t\del_aw.
	$$
	On $\{t=2\}$, recall that $\del_tw = \del_aw = 0$,
	$$
	\del_t(w_0 + A^{\alpha}\del_{\alpha}w)(2,x) = \del_tw_0(2,x) + A^0\del_t\del_t w(2,x) = u_1(x) - A^0v_0^2(x) + A^0\del_t\del_t w(2,x).
	$$
	Furthermore, remark that:
	$$
	\Box w = v^2\ \Rightarrow\ \del_t\del_t w(2,x) = v^2(2,x) + \sum_a\del_a\del_au(2,x) = v_0^2(x).
	$$
	Substitute this into the last expression, we obtain:
	\begin{equation}\label{eq6-03-09-2020}
	\del_t(w_0 + A^{\alpha}\del_{\alpha}w)(2,x) = u_1(x).
	\end{equation}
	
	Let $\tilde{u} = w_0+A^{\alpha}\del_{\alpha}w$.
	Consider \eqref{eq8-03-09-2020}, \eqref{eq7-03-09-2020} and \eqref{eq6-03-09-2020}, $(\tilde{u}, \tilde{v})$ satisfies the following Cauchy Problem
	$$
	\left\{
	\aligned
	&\Box \tilde{u} = A^{\alpha}\del_{\alpha}(v^2)
	\\
	&\Box \tilde{v}  + c^2\tilde{v} = B^{\alpha}\tilde{v}\del_{\alpha}\tilde{u}
	\endaligned
	\right.
	$$
	with initial data 
	$$
	\tilde{u}(2,x) = u_0(x), \quad \del_t \tilde{u}(2,x) = u_1(x),\quad \tilde{v}(2,x) = v_0(x),\quad \del_t\tilde{v}(2,x) = v_1.
	$$
	Then by uniqueness theory on \eqref{eq1a-main}, the desired result is obtained.
\end{proof}

Similar to \eqref{eq1a-main}, \eqref{eq1b-main} can be reformulated as following. We consider 
\begin{equation}\label{eq1-05-09-2020}
\left\{
\aligned
&\Box w = v^2,
\\
&\Box w_0 = 0,
\\
&\Box \tilde{v} + c^2\tilde{v} = B\tilde{v}(w_0 + A^{\alpha\beta}\del_{\alpha}\del_{\beta}w)
\endaligned
\right.
\end{equation}
with initial data constructed as following:
\begin{equation}\label{eq2-05-09-2020}
\aligned
&w(2,x) = \del_tw(2,x) = 0,\quad \tilde{v}(2,x) = v_0(x),\quad \del_t\tilde{v}(2,x) = v_1(x)
\\
&w_0(2,x) = u_0(x),\quad \del_tw_0(2,x) = u_1(x) - 2A^{00}v_0(x)v_1(x).
\endaligned
\end{equation}
Then similar to the previous result, one has
\begin{lemma}\label{lem1-06-10-2020}
	Let $(u,v)$ be a $C^4$ solution to \eqref{eq1b-main} with the following initial data
	\begin{equation}\label{eq3-08-09-2020}
	u(2,x) = u_0(x),\quad \del_t u(2,x) = u_1(x),\quad v(2,x) = v_0(x),\quad \del_t v(2,x) = v_1(x).
	\end{equation}
	Suppose that $(w,w_0,\tilde{v})$ is the $C^2$ solution to the Cauchy problem of \eqref{eq1-05-09-2020} with initial data \eqref{eq2-05-09-2020}. Then
	\begin{equation}\label{eq2-08-09-2020}
	u = w_0 + A^{\alpha\beta}\del_{\alpha}\del_{\beta}w,\quad v = \tilde{v}
	\end{equation}
	when both $(u,v)$ and $(w,w_0,\tilde{v})$ exist. Furthermore, when $(w,w_0,\tilde{v})$ exists, $(u,v)$ defined via \eqref{eq2-08-09-2020} is the solution to \eqref{eq1b-main} with \eqref{eq3-08-09-2020}.
\end{lemma}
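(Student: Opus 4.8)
The plan is to follow the same uniqueness argument already used for Lemma~\ref{lem1-05-10-2020}. Starting from the solution $(w,w_0,\tilde v)$ of the auxiliary Cauchy problem \eqref{eq1-05-09-2020}--\eqref{eq2-05-09-2020}, I would introduce the candidate $\tilde u := w_0 + A^{\alpha\beta}\del_\alpha\del_\beta w$ and verify that the pair $(\tilde u,\tilde v)$ solves the original system \eqref{eq1b-main} with the prescribed data \eqref{eq3-08-09-2020}. Since $(u,v)$ is assumed to be a solution of the same Cauchy problem, the identities \eqref{eq2-08-09-2020} then follow from local uniqueness for \eqref{eq1b-main} on the common interval of existence; the last sentence of the lemma is just the observation that, once $(w,w_0,\tilde v)$ has been produced, the pair $(\tilde u,\tilde v)$ built from it is \emph{by the very same computation} a solution of \eqref{eq1b-main}, hence the solution.

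The differential identities are immediate. Since $\Box$ has constant coefficients it commutes with the constant-coefficient Hessian operator $A^{\alpha\beta}\del_\alpha\del_\beta$, so, using $\Box w_0=0$, $\Box w=v^2$ and $v=\tilde v$,
\[
\Box\tilde u \;=\; \Box w_0 + A^{\alpha\beta}\del_\alpha\del_\beta\Box w \;=\; A^{\alpha\beta}\del_\alpha\del_\beta(\tilde v^2),
\]
which is the wave equation in \eqref{eq1b-main}. The Klein--Gordon equation is nothing but the third line of \eqref{eq1-05-09-2020} with $\tilde u$ written in place of $w_0+A^{\alpha\beta}\del_\alpha\del_\beta w$, i.e. $\Box\tilde v+c^2\tilde v=B\,\tilde v\,\tilde u$.

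The step that goes beyond the proof of Lemma~\ref{lem1-05-10-2020} is the matching of the initial data for $\tilde u$: here $\tilde u$ and $\del_t\tilde u$ on $\{t=2\}$ involve derivatives of $w$ up to order three, not just order one. The mechanism is that $w$ and $\del_t w$ vanish identically on $\{t=2\}$, so every derivative of $w$, or of $\del_t w$, that is tangential to the slice vanishes there too; the only low-order derivatives of $w$ that survive on $\{t=2\}$ are the normal ones, which I would read off from the constraint. Restricting $\Box w=v^2$ to $\{t=2\}$ and using $\Delta w|_{t=2}=0$ gives $\del_t^2 w(2,x)=v_0^2(x)$; restricting $\del_t(\Box w)=\del_t(v^2)$ to $\{t=2\}$ and using that $\del_t w$ and its spatial derivatives vanish there gives $\del_t^3 w(2,x)=2v_0(x)v_1(x)$. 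Substituting these (together with the vanishing of $\del_t w$, $\del_a w$, $\del_a\del_b w$, $\del_t\del_a w$ on the slice) into $\tilde u(2,x)$ and $\del_t\tilde u(2,x)$, the only non-trivial contributions come from the $\del_t^2$- and $\del_t^3$-components of $A^{\alpha\beta}\del_\alpha\del_\beta$; the datum $\del_t w_0(2,x)=u_1(x)-2A^{00}v_0(x)v_1(x)$ in \eqref{eq2-05-09-2020}, together with the datum for $w_0(2,x)$, is chosen precisely so that these correction terms are absorbed and $\tilde u(2,x)=u_0(x)$, $\del_t\tilde u(2,x)=u_1(x)$; the data for $\tilde v$ coincide with those for $v$ by construction. (In the Klein--Gordon--Zakharov case $A^{\alpha\beta}\del_\alpha\del_\beta=\Delta$, so the time components of $A^{\alpha\beta}$ vanish and none of these corrections is present.)

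With the Cauchy problem matched, I would invoke the standard local uniqueness theorem for the semilinear system \eqref{eq1b-main} to conclude $u=\tilde u$, $v=\tilde v$ wherever both solutions are defined. The one genuine subtlety I expect is regularity bookkeeping rather than any real difficulty: $\tilde u=w_0+A^{\alpha\beta}\del_\alpha\del_\beta w$ is two derivatives less regular than $w$, so to make sense of $\Box\tilde u$ and of the third-order computations on $\{t=2\}$ one must know that $w$ (equivalently, the source $v^2=\tilde v^2$) carries enough regularity, $w\in C^4$ rather than merely $C^2$. In the bootstrap of Section~\ref{sec-bootstrap} all the unknowns are controlled in high Sobolev norms, so this is automatic there; everything else reduces to elementary differentiation along the slice $\{t=2\}$ and the constant-coefficient commutation used above.
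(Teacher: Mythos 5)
Your overall route --- set $\tilde u := w_0 + A^{\alpha\beta}\del_\alpha\del_\beta w$, use that $\Box$ commutes with the constant-coefficient Hessian operator, match the Cauchy data on $\{t=2\}$, and conclude by local uniqueness for \eqref{eq1b-main} --- is exactly the paper's (its proof just says ``similar to Lemma~\ref{lem1-05-10-2020}'' and cites Sogge for uniqueness). The gap sits in the one step you yourself identify as new, the data matching, which you assert rather than compute, and which fails as you state it. First, it is not true that only the pure time derivatives of $w$ survive on the slice: while $w(2,\cdot)=\del_t w(2,\cdot)=0$ does kill $\del_a w$, $\del_a\del_b w$, $\del_t\del_a w$ and $\del_t\del_a\del_b w$ there, the mixed third derivative $\del_t^2\del_a w(2,x)=\del_a\big(\del_t^2 w\big)(2,x)=\del_a\big(v_0^2\big)(x)$ does \emph{not} vanish, because the tangential derivative acts on the non-constant restriction $\del_t^2 w|_{t=2}=v_0^2$. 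Second, the datum $w_0(2,x)=u_0(x)$ in \eqref{eq2-05-09-2020} carries no correction at all, so nothing absorbs the contribution $A^{00}\del_t^2 w(2,x)=A^{00}v_0^2$ to $\tilde u(2,x)$. Carrying the computation through with the data \eqref{eq2-05-09-2020} as written gives
\begin{equation*}
\tilde u(2,x) = u_0(x) + A^{00}v_0^2(x),\qquad
\del_t\tilde u(2,x) = u_1(x) + 2A^{0a}\del_a\big(v_0^2\big)(x)
\end{equation*}
(only the $A^{00}$-terms in $\del_t\tilde u$ cancel against $-2A^{00}v_0v_1$, as you say), so the data match only when $A^{00}=0$ and $A^{0a}=0$, i.e.\ precisely the Klein--Gordon--Zakharov specialization you mention in passing. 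For general $A^{\alpha\beta}$ the matching requires the amended data $w_0(2,x)=u_0-A^{00}v_0^2$ and $\del_t w_0(2,x)=u_1-2A^{00}v_0v_1-2A^{0a}\del_a(v_0^2)$; your claim that \eqref{eq2-05-09-2020} is ``chosen precisely so that these correction terms are absorbed'' is therefore false as written, and a correct proof must either verify the matching with corrected data or restrict the coefficients as in the application.

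A secondary point: you appeal to ``standard local uniqueness for the semilinear system \eqref{eq1b-main}'', but the wave equation there has $A^{\alpha\beta}\del_\alpha\del_\beta(v^2)$, i.e.\ second derivatives of the unknown $v$, on its right-hand side, so off-the-shelf first-order semilinear uniqueness does not apply; this is exactly why the paper's one-line proof bothers to invoke the quasilinear theorem (Theorem 2.2 in Section 1.2 of Sogge). Your regularity bookkeeping ($w\in C^4$ so that $\tilde u\in C^2$ and the third-order slice computations make sense) is apposite and is indeed glossed over in the paper's statement.
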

\begin{proof}
	The proof is quite similar. We only need to worry about the local uniqueness of the system \eqref{eq1b-main}. This is can be regarded as an application of Theorem 2.2 in Section 1.2 of \cite{Sogge-2008-book}.
\end{proof}

In order to treat \eqref{eq4-03-09-2020} and \eqref{eq1-05-09-2020}  simultaneously, we consider a more general system
\begin{equation}\label{eq-main}
\left\{
\aligned
&\Box w  = v^2,
\\
&\Box w_0 = 0,
\\
&\Box v + c^2v = B^{\alpha}v\del_{\alpha}w_0 + Kvw_0 + vA^{\alpha\beta}\del_{\alpha}\del_\beta w.
\endaligned
\right.
\end{equation} 

\subsection{Statement of the main result on auxiliary system}\label{subsec-auxi-structure}
As explained in Introduction, we will firstly establish global stability results on \eqref{eq-main}.
\begin{theorem}\label{thm-auxiliary}
Consider the Cauchy problem associated to \eqref{eq-main} with initial data posed on $\{t=2\}$ and compactly supported in $\{|x|<1\}$:
\begin{align*}
&w(2,x) = \del_tw(2,x) = 0,\quad v(2,x) = v_0(x),\quad \del_tv(2,x) = v_1(x),
\\
&w_0(2,x) = u_0(x),\quad \del_tw_0(2,x) = u_1(x).
\end{align*}
Then there exists a integer $N\geq 9$ and a positive constant $\vep_0>0$ determined by the system, such that for all $0\leq \vep\leq \vep_0$, if
\begin{equation}
\|u_0\|_{H^{N+1}} + \|v_0\|_{H^{N+1}} + \|u_1\|_{L^2(H^N)} + \|v_1\|_{H^N}\leq \vep,
\end{equation}
then the local-in-time solution of \eqref{eq-main} associated with such initial data extends to time infinity.
\end{theorem}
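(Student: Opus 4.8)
The plan is to run a bootstrap argument on the hyperboloids $\Hcal_s\subset\Kcal$, $s\in[2,s_1]$, exactly in the spirit of the hyperboloidal foliation method of Section \ref{sec-tech}; finite propagation speed and the support assumption confine the solution to $\Kcal$, so every estimate recalled there applies. The three unknowns play very different roles. The component $w_0$ solves the free wave equation, so its standard and conformal energies are conserved and $\lesssim\vep$, and Lemma \ref{lem1-13-06-2020} applied to the commuted fields $\del^IL^Jw_0$ (which again solve $\Box=0$) gives $|w_0|_p+|\del w_0|_p\lesssim\vep(t-r)^{-1/2}t^{-1/2}\lesssim\vep s^{-1}$ for $p$ a few orders below $N$; thus $w_0$ is effectively a prescribed, fast-decaying forcing. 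The component $w$ is the shifted primitive, $\Box w=v^2$, and $v$ is the Klein--Gordon field whose forcing $B^\alpha v\del_\alpha w_0+Kvw_0+vA^{\alpha\beta}\del_\alpha\del_\beta w$ is, structurally, $v$ multiplied by either a fast-decaying free-wave quantity or the Hessian of the primitive. I would bootstrap a hierarchy: a top-order level $N$ on which $\Ecal_{0,c}^N(s,v)^{1/2}$, $\Ecal_0^N(s,w)^{1/2}$ and the conformal quantities $\Ecal_2^N(s,w)^{1/2}$, $\Fcal_2^N(2;s,w)$ are allowed to grow slowly (at worst $C_1\vep s^{\delta}$ for a small $\delta$), a lower-order level $N_1\simeq N/2$ on which the corresponding energies are controlled with a fixed constant (essentially no growth), and the sharp pointwise bound $|v|_{N_1}(t,x)\leq C_1\vep t^{-1}$, with no $s^\delta$ loss. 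The aim is to recover every bound with its constant halved, which closes the argument for $\vep$ small depending on $\delta$ and $N$.

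For $w$ itself, insert $\Box w=v^2$ into Propositions \ref{prop 1 energy} and \ref{prop-conformal}. Since the source is a Klein--Gordon quadratic, $\||v^2|_p\|_{L^2(\Hcal_s)}\lesssim(\sup_{\Hcal_s}|v|_{N_1})\,\||v|_p\|_{L^2(\Hcal_s)}$, and the crucial point is $\||v|_p\|_{L^2(\Hcal_s)}\lesssim\Ecal_{0,c}^{p}(s,v)^{1/2}$, which holds because the standard Klein--Gordon energy density contains the mass term $c^2v^2$; with the sharp decay $\sup_{\Hcal_s}|v|_{N_1}\lesssim\vep s^{-1}$ this yields $\||v^2|_p\|_{L^2(\Hcal_s)}\lesssim\vep s^{-1}\Ecal_{0,c}^{p}(s,v)^{1/2}$ and $s\||v^2|_p\|_{L^2(\Hcal_s)}\lesssim\vep\Ecal_{0,c}^{p}(s,v)^{1/2}$. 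Feeding these into the energy inequalities gives only mild (logarithmic, hence $\ll s^\delta$) growth of the standard energy of $w$ and at worst a power growth of its conformal energy; both will be harmless because $w$ enters the $v$-equation only through $\del w$ or $\del\del w$, always carrying an extra decaying weight. The needed $L^2$ and $L^\infty$ bounds on $\del w$ and $\del\del w$ then come from \eqref{eq1-10-06-2020}--\eqref{eq8-10-06-2020} and, for the Hessian, from Proposition \ref{prop1-14-08-2020}.

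The heart of the matter is the energy estimate for $v$ via Proposition \ref{prop 1 energy}. The terms $B^\alpha v\del_\alpha w_0$ and $Kvw_0$ are $v$ times a fast-decaying free-wave quantity: putting $|w_0|_{N_1},|\del w_0|_{N_1}\lesssim\vep s^{-1}$ in $L^\infty$ and $v$ in $L^2$ (at top order, through the mass term), and peeling off the top-order $w_0$-factor with its conformal-energy $L^2$ bound \eqref{eq2-10-06-2020}, each contributes $\lesssim\vep s^{-1}\Ecal_{0,c}^N(s,v)^{1/2}$, an acceptable $\tau^{-1}$-rate. The genuinely dangerous term is the strong coupling $vA^{\alpha\beta}\del_\alpha\del_\beta w$. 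Here Proposition \ref{prop1-14-08-2020} gives $(s/t)^2|\del\del w|_{p,k}\lesssim|v^2|_{p,k}+t^{-1}|\del w|_{p+1,k+1}$, so $v\,\del\del w$ is controlled by $(s/t)^{-2}v$ times $\big(|v^2|+t^{-1}|\del w|_{p+1,k+1}\big)$, and the apparently dangerous weight $(s/t)^{-2}$ is precisely absorbed by the ``fast conical decay'' of the Klein--Gordon field, a consequence of Proposition \ref{prop1-fast-kg}: $\|(s/t)^{-2}|v|_{p,k}\|_{L^2(\Hcal_s)}$ and $\|(s/t)^{-2}|v|_{p,k}\|_{L^\infty(\Hcal_s)}$ are bounded by slightly higher-order energies of $v$. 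One is left with $\|(s/t)^{-2}v\|_{L^2}\cdot\|v^2\|_{L^\infty}$-type products (cubic in $v$, manifestly integrable after using the sharp decay) and $\|(s/t)^{-2}v\|_{L^\infty}\cdot\|t^{-1}\del w\|_{L^2}$-type products, where $\|t^{-1}|\del w|_{N+1}\|_{L^2}=\|s^{-1}(s/t)|\del w|_{N+1}\|_{L^2}\lesssim s^{-1}\Ecal_0^{N+1}(s,w)^{1/2}$ carries an extra $s^{-1}$ that defeats the logarithmic growth of the $w$-energy. Summing, $\Ecal_{0,c}^N(s,v)^{1/2}\lesssim\vep+\vep\int_2^s\tau^{-1}\Ecal_{0,c}^N(\tau,v)^{1/2}d\tau$ up to genuinely integrable remainders, which by Gr\"onwall gives $\Ecal_{0,c}^N(s,v)^{1/2}\lesssim\vep s^{C\vep}\le C_1\vep s^\delta$ when $C\vep\le\delta$; the lower-order energies close the same way with no growth. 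It remains to re-derive the sharp bound on $v$: this comes from Proposition \ref{lem1'-01-09-2020} with $\omega\equiv0$, for which one needs $\int_{\lambda_0}^s\lambda\big(|f|+\lambda^{-2}|v|_{2,2}\big)d\lambda$ to converge along the rays $\varphi_{t,x}$; with $f=vG$ and the low-order bounds in hand this integrand decays like $\vep^2\lambda^{-1}$, up to the factor $t/s$, which is bounded in the region $r/t\le3/5$ and in the region $r/t>3/5$ is offset by the larger lower limit $\lambda_0\simeq t/s$, so the integral is controlled and returns $|v|(t,x)\lesssim\vep t^{-1}$ uniformly in $s_1$.

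The main obstacle is the borderline, self-referential nature of this loop: every nonlinear contribution to the $v$-energy is critical (of the form $\vep\,\tau^{-1}\Ecal(\tau,v)^{1/2}$ or barely better), the resulting slow growth of $v$ re-enters through $v^2$ into $w$, and $\del w$, $\del\del w$ re-enter the $v$-equation; keeping the hierarchy (top-order $s^\delta$, lower-order bounded, sharp $L^\infty$) consistent while simultaneously closing the sharp-decay bootstrap, which itself depends on the low-order energy bounds, is the delicate part. A secondary but essential difficulty is the derivative budget: the Hessian estimate costs one derivative of $w$, so $w$ must be propagated one order above $v$ (hence the data in $H^{N+1}$), the Klainerman--Sobolev inequalities \eqref{eq3-10-06-2020}--\eqref{eq8-10-06-2020} cost two more, and extracting the $(s/t)^{-2}$-weighted control of $v$ from Proposition \ref{prop1-fast-kg} costs a further few, all of which must fit under $N\ge9$. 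Once the hierarchy and the derivative count are fixed, the rest (the product rule \eqref{eq12-10-06-2020}, the Hessian and fast--Klein--Gordon identities, and the energy inequalities) is routine, and Lemmas \ref{lem1-05-10-2020} and \ref{lem1-06-10-2020} transfer the conclusion to the original systems.
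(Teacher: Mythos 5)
Your scheme has a load-bearing gap at exactly the point the paper identifies as the main difficulty of the auxiliary system. You posit a lower-order level on which the Klein--Gordon energies are ``controlled with a fixed constant (essentially no growth)'' and a sharp bound $|v|_{N_1}\leq C_1\vep t^{-1}$ with no loss, and you promise to recover both with halved constants. But the couplings $Kvw_0$ and $B^{\alpha}v\del_{\alpha}w_0$ carry only the non-improvable free-wave decay $|w_0|+|\del w_0|\lesssim \vep s^{-1}$, so at \emph{every} order (even order zero) the energy inequality contains the borderline term $C\vep\int_2^s \tau^{-1}\Ecal_{0,c}^{p}(\tau,v)^{1/2}d\tau$; Gr\"onwall then gives at best $\Ecal_{0,c}^{p}(s,v)^{1/2}\lesssim \vep\, s^{C\vep}$, never a uniform bound --- this is precisely \eqref{eq6-01-09-2020} and the obstruction spelled out in Section \ref{subsec-auxi-structure}. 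Hence the ``no growth'' layer of your bootstrap cannot be recovered, and Klainerman--Sobolev no longer yields $|v|\lesssim\vep t^{-1}$. Your substitute route to the sharp decay, Proposition \ref{lem1'-01-09-2020} with $\omega\equiv 0$ and $f=vG$, fails for the same reason: along $\varphi_{t,x}$ one has $|G|\gtrsim|\del w_0|\sim\vep\lambda^{-1}$ with the standard-energy decay you invoke, so $\int_{\lambda_0}^{s}\lambda|f|\,d\lambda$ is logarithmically divergent (or, treating $|v|$ as unknown, a borderline Gr\"onwall term giving $s|v|\lesssim\vep s^{C\vep}$); enlarging the lower limit to $\lambda_0\simeq t/s$ only reduces it to $\log(s/\lambda_0)$ and does not make it bounded. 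So the no-loss bound $|v|\le C_1\vep t^{-1}$ is not recovered and the loop does not close as designed.

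The paper's proof closes this loop with two ingredients absent from your proposal: the right-hand side of the $v$--equation is absorbed into the \emph{variable-coefficient potential} $\omega=c^{-2}(B^{\alpha}\del_{\alpha}w_0+Kw_0+A^{\alpha\beta}\del_{\alpha}\del_{\beta}w)$ --- this is why Proposition \ref{lem1'-01-09-2020} is stated with $c^2(1-\omega)$ --- so that only $\int|\del\omega|$ and $\int|\dels\omega|$ must converge; and that convergence is obtained by upgrading $\del w_0$ beyond $s^{-1}$ via the conformal energy of the free wave $w_0$, yielding $|\del w_0|_{N-2}\lesssim\vep(s/t)^{-1}s^{-2+\delta}$ as in \eqref{eq8-08-09-2020}, whereas you use only the $s^{-1}$ decay. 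Relatedly, because no uniform Klein--Gordon energy is available, the paper must also prove the no-loss bound $|\del\del w|\lesssim\vep s^{-1}$ by Proposition \ref{prpo2 wave-sharp} (integration along hyperbolas, exploiting the crucial $(s/t)^2$ factor in $S^w$), a step your outline omits entirely. Finally, placing $\Ecal_2^N(s,w)$ and $\Fcal_2^N(2;s,w)$ in the bootstrap at rate $s^{\delta}$ is not recoverable: with the source $v^2$, Proposition \ref{prop-conformal} gives a contribution $\int_2^s\tau\|\,|v^2|_N\|_{L^2(\Hcal_\tau)}d\tau\sim\vep^2 s$, i.e.\ power growth, as you yourself note later --- fortunately this piece is unnecessary, since the paper propagates only the standard energies of $w$ and $\del_{\alpha}w$, but as stated it is another bound your bootstrap could not return.
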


Based on the above result together with Lemma \ref{lem1-05-10-2020}, Lemma \ref{lem1-06-10-2020}, we conclude Theorem \ref{thm-main}.

\begin{remark}
For the Cauchy problem associated to \eqref{eq-main}, one can also consider a initial data with non-zero $w(2,x)$, $\del_tw(2,x)$ and the global stability result still holds.  
\end{remark}

\subsection{Structure of the auxiliary system}
\eqref{eq-main} is still a strong coupled W-KG system. However, it enjoys a special structure called the Hessian structure. That is, omit for a moment the linear component $w_0$, the wave component $w$ is only coupled in Hessian form in right-hand-side of the system and especially, the gradient $\del w$ does not appear. As explained in Introduction, the better decay and energy bounds of Hessian form permits us to establish integrabel $L^2$ bound on $v\del\del w$. 

If we omit $w_0$, then this kind of system has already been handled in  \cite{Stingo-2018}, \cite{Dong-2020-2} and \cite{M-2020-strong}. Following the perspective of \cite{M-2020-strong}, we aquire that this system is {\sl subcritical} in the sense of principle decay. 

However, the presence of $w_0$ brings supplementary terms $vw_0,v\del w_0$  which are not completely trivial. 
Given that the decay of $|w_0|$ and $|\del w_0|$ are $s^{-1}$ which seems impossible to be improved, $vw_0$ and $v\del w_0$ will lead at least a logarithmic loss on the energy bound of Klein-Gordon component. This prevents one from expecting uniform energy bounds on Klein-Gordon component for lower (even for zero) order. Without this important uniform bound, one can no longer obtain sharp decay $v\sim t^{-1}$ via Klainerman-Sobolev inequality, which is crucial in the bootstrap argument. To overpass this difficulty, we rely on Proposition \ref{lem1'-01-09-2020}. This $L^{\infty}-L^{\infty}$ estimate is originally introduced in \cite{Kl2} and applied in many other context, see for example \cite{Dfx}, \cite{KS-2011}, \cite{M1} etc. Here we present a version with non-constant-coefficient Klein-Gordon potential. This permits us to establish the following decay
$$
|\del v|\simeq (s/t)^2s^{-1}
$$
without uniform energy bounds. Similarly, this lack of uniform energy bound on Klein-Gordon component also brings inconvenience when we try to obtain sharp decay on $\del\del w$ because of the term $v^2$ coupled in the equation of $w$.  This relies on a $L^{\infty}-L^{\infty}$ estimate on wave equation based on integration along hyperbolas which is Proposition \ref{prpo2 wave-sharp} established in \cite{M-2020-strong}.

%
%


\section{Proof of Theorem \ref{thm-auxiliary}}\label{sec-bootstrap}
\subsection{Energy and decay bounds on $w_0$}
Remark that $w_0$ is a solution to a free-linear wave equation with sufficiently regular and compactly supported initial data. Then it has conserved standard and conformal energy:
\begin{equation}
\Ecal_0^N(s,w_0)^{1/2} + \Ecal_2^N(s,w_0)^{1/2}\leq C_0\vep
\end{equation}
where $C_0$ is a constant determined by $N$. Standard energy bounds lead to the following decay:
\begin{equation}\label{eq6-08-10-2020}
|\del w_0|_{N-2}\leq CC_0\vep s^{-1},\quad |\dels w_0|_{N-2}\leq CC_0\vep t^{-1}.
\end{equation}
Remark that in this case, recalling \eqref{eq1-09-10-2020},
\begin{equation}\label{eq8-08-10-2020}
\Fcal_2^N(2;s,w_0)\leq CC_1\vep\ln(s)\leq CC_1\vep s^{\delta}.
\end{equation}
Then by \eqref{eq2-10-06-2020},
\begin{equation}\label{eq8-08-09-2020}
|\del w_0|_{N-2}\leq CC_0\vep (s/t)^{-1}s^{-2}\ln(s)\leq CC_1\vep (s/t)^{-1}s^{-2+\delta}.
\end{equation}
Furthermore, by Lemma \ref{lem1-13-06-2020}
\begin{equation}\label{eq3-01-09-2020}
|w_0|_{N-2}\leq CC_1\vep s^{-1}
\end{equation}
which leads to
\begin{equation}\label{eq9-08-09-2020}
|\dels w_0|_{N-3}\leq CC_0\vep (s/t)s^{-2}.
\end{equation}

\subsection{Bootstrap assumption and direct bounds}
Remark that the initial data are posed on $\{t=1\}$ and supported in $\{|x|<1\}$. The property of finite speed propagation says that the local solution is supported in $\Kcal = \{r<t-1\}$. Furthermore, taking $\vep$ sufficiently small such that (thanks to the local theory on wave system) the local solution extends beyond $t=5/2$ and remark that $\Hcal_2\cap\Kcal\subset \{2\leq t\leq 5/2\}$, one can take the restriction of the local solution on $\Hcal_2$ as the initial data on $\Hcal_2$. Again, due to the local theory, the energy on $\Hcal_2$ is bounded by the initial energy on $\{t=2\}$.  So for sufficiently small $\vep$ (determined by the system and $N$), there is a constant $C_0$ (also determined by the system and $N$) such that
$$
\max\Big\{\sum_{\alpha=0}^3\Ecal_0^N(2,\del_{\alpha} w)^{1/2}, \Ecal_0^N(2,w)^{1/2}, \Ecal_{0,1}^N(2,v)^{1/2}\Big\} = C_0\vep .
$$

Then make the following bootstrap assumption on a hyperbolic time interval $[2,s_1]$:
\begin{equation}\label{eq8-15-08-2020}
\max\Big\{\sum_{\alpha}\Ecal_0^N(s,\del_{\alpha} w)^{1/2}, \Ecal_0^N(s,w)^{1/2}, \Ecal_{0,1}^N(s,v)^{1/2}\Big\}\leq C_1\vep s^{\delta}
\end{equation}
with $C_1> C_0, \delta\leq 1/20$.

\begin{remark}
	The restriction on $N$ can be improved. However here we simply take $N\geq 9$ because when considering $|AB|_p, p\leq N$, we want 
	$$
	|AB|_p\leq C|A|_p|B|_{N-5} + C|A|_{N-5}|B|_p.
	$$
\end{remark}

By Klainerman-Sobolev type inequality, 
\begin{equation}\label{eq1-08-10-2020}
s|\del\del w|_{N-2} + t|\del\dels w|_{N-2}\leq CC_1\vep s^{\delta},
\end{equation}
\begin{equation}\label{eq10-15-08-2020}
s|\del w|_{N-2} + t|\dels w|_{N-2} \leq CC_1\vep s^{\delta},
\end{equation}
\begin{equation}\label{eq11-15-08-2020}
s|\del v|_{N-2} + t|\dels v|_{N-2} + t|v|_{N-2}\leq CC_1\vep s^{\delta}.
\end{equation}

\begin{remark}
	During the analysis, $C$ denotes a constant determined by $N,\delta$ and system. 
\end{remark}

\subsection{Bounds on Hessian form of $w$.}
By Proposition \ref{prop1-14-08-2020}, one can establish the following bounds on Hessian form:
\begin{equation}\label{eq12-15-08-2020}
\|(s/t)^2s|\del\del w|_{N-1}\|_{L^2(\Hcal_s)}\leq  CC_1\vep s^{2\delta},
\end{equation}
\begin{equation}\label{eq13-15-08-2020}
(s/t)^2|\del\del w|_{N-3}\leq CC_1\vep (s/t)s^{-2+2\delta}.
\end{equation}
Here we remark that the Hessian form enjoy better principle decay ($-2+2\delta$ order) than the gradient ($-1+\delta$ order).

These are based on the following bounds on $|\Box w| = |v^2|$:
\begin{equation}\label{eq2-08-10-2020}
|\Box w|_{N-2}\leq C(C_1\vep)^2(s/t)^2s^{-2+2\delta},
\end{equation}
\begin{equation}\label{eq3-08-10-2020}
\|(s/t)^{-1}|\Box w|_N\|_{L^2(\Hcal_s)}\leq CC_1\vep s^{-1+2\delta}.
\end{equation}
The first is direct via \eqref{eq11-15-08-2020}. For the second, remark that
$$
\aligned
\|(s/t)^{-1}|v^2|_N\|_{L^2(\Hcal_s)}\leq& CC_1\vep\|(s/t)^{-1}|v|_{N-2}|v|_N\|_{L^2(\Hcal_s)}
\\
\leq& CC_1\vep s^{-1+\delta} \||v|_N\|_{L^2(\Hcal_s)}\leq C(C_1\vep)^2s^{-1+2\delta}. 
\endaligned
$$
Then apply Proposition \ref{prop1-14-08-2020} together with \eqref{eq2-08-10-2020} and \eqref{eq3-08-10-2020}, \eqref{eq12-15-08-2020} and \eqref{eq13-15-08-2020} are proved.

\subsection{Conical decay of Klein-Gordon component}\label{subsec-model-conical}
In this subsection we establish the following two bounds:
\begin{equation}\label{eq16-15-08-2020}
|v|_{N-3}\leq CC_1\vep(s/t)^2s^{-1+\delta}.
\end{equation}
\begin{equation}\label{eq17-15-08-2020}
\|(s/t)^{-1}|v|_{N-1}\|_{L^2(\Hcal_s)}\leq CC_1\vep s^{\delta}.
\end{equation}

These are done by applying Proposition \ref{prop1-fast-kg}. We first prove that
\begin{equation}\label{eq4-08-10-2020}
|\Box v + c^2 v|_{N-3}\leq CC_1\vep |v|_{N-3}.
\end{equation}
This is by checking each term in right-hand-side of the Klein-Gordon equation of \eqref{eq-main}. In fact by \eqref{eq3-01-09-2020} and \eqref{eq6-08-10-2020},
$$
|vw_0|_{N-3} + |v\del w_0|_{N-3}\leq C\big(|w_0|_{N-3} + |\del w_0|_{N-3}\big)|v|_{N-3}\leq CC_1\vep|v|_{N-3}.
$$
Finally, by \eqref{eq1-08-10-2020}
$$
|v\del\del w|_{N-3}\leq C|\del\del w|_{N-3}|v|_{N-3}\leq CC_1\vep |v|_{N-3}.
$$
So we conclude by \eqref{eq4-08-10-2020}. Then substitute \eqref{eq4-08-10-2020} into \eqref{eq2 prop fast-KG},
$$
c^2|v|_{N-3}\leq C(s/t)^2|\del v|_{N-2} + C|\Box v + c^2 v|_{N-3}\leq CC_1\vep(s/t)^2s^{-1+\delta} + CC_1\vep|v|_{N-3}.
$$
Taking $C_1\vep$ sufficiently small such that
\begin{equation}\label{eq7-08-10-2020}
|CC_1\vep|\leq \frac{c^2}{2},
\end{equation}
we obtain \eqref{eq16-15-08-2020}.

Then we turn to $L^2$ bound. We establish the following bound on source term:
\begin{equation}\label{eq5-08-10-2020}
\|(s/t)^{-1}|\Box v + c^2 v|_{N-1}\|_{L^2(\Hcal_s)}\leq C(C_1\vep)^2s^{\delta} + CC_1\vep\|(s/t)^{-1}|v|_{N-1}\|_{L^2(\Hcal_s)} .
\end{equation}
This is also by checking each term. 
$$
\begin{aligned}
&\|(s/t)^{-1}|vw_0|_{N-1}\|_{L^2(\Hcal_s)}
\\
\leq& C\|(s/t)^{-1}|w_0|_{N-2}|v|_{N-1}\|_{L^2(\Hcal_s)} 
\\
&+ C\|(s/t)^{-1}|v|_{N-3}|w_0|_{N-1}\|_{L^2(\Hcal_s)}
\\
\leq& CC_1\vep\|(s/t)^{-1}s^{-1}|v|_{N-1}\|_{L^2(\Hcal_s)}
 + CC_1\vep s^{-1+\delta}\|(s/t)|w_0|_{N-1}\|_{L^2(\Hcal_s)}
\\
\leq& CC_1\vep\|(s/t)^{-1}|v|_{N-1}\|_{L^2(\Hcal_s)}
 + C(C_1\vep)^2 s^{-1+\delta}\ln (s)
\end{aligned}
$$
where \eqref{eq3-01-09-2020}, \eqref{eq16-15-08-2020} and \eqref{eq2-10-06-2020} (combined with \eqref{eq8-08-10-2020}) are applied. Similarly,
$$
\aligned
&\|(s/t)^{-1}|v\del w_0|_{N-1}\|_{L^2(\Hcal_s)}
\\
\leq& C\|(s/t)^{-1}|v|_{N-1}|\del w_0|_{N-2}\|_{L^2(\Hcal_s)} + \|(s/t)^{-1}|v|_{N-3}|\del w_0|_{N-1}\|_{L^2(\Hcal_s)}
\\
\leq& CC_1\vep\|(s/t)^{-1}|v|_{N-1}\|_{L^2(\Hcal_s)} + CC_1\vep s^{-1+\delta}\|(s/t)|\del w_0|_{N-1}\|_{L^2(\Hcal_s)}
\\
\leq& CC_1\vep\|(s/t)^{-1}|v|_{N-1}\|_{L^2(\Hcal_s)} +  C(C_1\vep)^2s^{-1+\delta}.
\endaligned
$$
$$
\aligned
&\|(s/t)^{-1}|v\del\del w|_{N-1}\|_{L^2(\Hcal_s)}
\\
\leq& C\|(s/t)^{-1}|v|_{N-1}|\del\del w|_{N-2}\|_{L^2(\Hcal_s)}
\\
&+ C\|(s/t)^{-1}|v|_{N-3}|\del\del w|_{N-1}\|_{L^2(\Hcal_s)}
\\
\leq& CC_1\vep \|(s/t)^{-1}|v|_{N-1}\|_{L^2(\Hcal_s)} + CC_1\vep s^{-1+\delta}\|(s/t)|\del\del w|_{N-1}\|_{L^2(\Hcal_s)}
\\
\leq& CC_1\vep \|(s/t)^{-1}|v|_{N-1}\|_{L^2(\Hcal_s)} + C(C_1\vep)^2 s^{-1+2\delta}.
\endaligned
$$
Then we conclude by \eqref{eq5-08-10-2020}.

Now taking $C_1\vep$ sufficiently small, simlar to \eqref{eq16-15-08-2020}, \eqref{eq17-15-08-2020} is established.

\subsection{Improved energy bounds for lower order: Klein-Gordon component}
\label{subsec-model-KG-lower}
This subsection is dedicated to 
\begin{equation}\label{eq6-01-09-2020}
\Ecal_{0,c}^{N-2}(s,v)^{1/2}\leq \big(C_0\vep  + C(C_1\vep)^2\big)(s/2)^{CC_1\vep}.
\end{equation}
Let us firstly establish the following bounds on source terms.
\begin{equation}\label{eq4-01-09-2020}
\||v\del\del w|_{N-1}\|_{L^2(\Hcal_s)}\leq C(C_1\vep)^2s^{-2+3\delta},
\end{equation}
\begin{equation}\label{eq5-01-09-2020}
\||vw_0|_{N-2}\|_{L^2(\Hcal_s)} + \||v\del w_0|_{N-2}\|_{L^2(\Hcal_s)}\leq CC_1\vep s^{-1}\Ecal_{0,c}^{N-2}(s,v)^{1/2}.
\end{equation}
The first is due to \eqref{eq12-15-08-2020}, \eqref{eq13-15-08-2020}, \eqref{eq16-15-08-2020} and \eqref{eq17-15-08-2020} :
$$
\aligned
\||v\del\del w|_{N-1}\|_{L^2(\Hcal_s)}\leq& C\||v|_{N-3}|\del\del w|_{N-1}\|_{L^2(\Hcal_s)} + C\||v|_{N-1}|\del\del w|_{N-3}\|_{L^2(\Hcal_s)}
\\
\leq& CC_1\vep s^{-2+\delta}\|(s/t)^2s|\del\del w|_{N-1}\|_{L^2(\Hcal_s)} 
\\
&+ CC_1\vep s^{-2+2\delta}\|(s/t)^{-1}|v|_{N-1}\|_{L^2(\Hcal_s)}
\\
\leq& C(C_1\vep)^2s^{-2+3\delta}.
\endaligned
$$
\eqref{eq5-01-09-2020} is directly by \eqref{eq3-01-09-2020} and \eqref{eq6-08-10-2020}. Then we conclude that, by energy estimate Proposition \ref{prop 1 energy} 
$$
\Ecal_{0,c}^{N-2}(s,v)^{1/2}\leq \Ecal_{0,c}^{N-2}(2,v)^{1/2} + C(C_1\vep)^2 
+ CC_1\vep\int_2^s\tau^{-1}\Ecal_{0,c}^{N-2}(\tau,v)^{1/2}d\tau.
$$ 
Then by Gronwall's inequality, \eqref{eq6-01-09-2020} is concluded.

A direct result of \eqref{eq6-01-09-2020} is the following bounds (thanks to Klainerman-Sobolev inequality and the fact that $C_0\leq C_1$, $C_1\vep \leq 1$) 
\begin{equation}\label{eq10-01-09-2020}
s|\del v|_{N-4} + t|v|_{N-4}\leq CC_1\vep s^{CC_1\vep}.
\end{equation}

\subsection{Sharp decay bounds.}\label{section sharp decay bounds}
This subsection is dedicated to the following sharp decay bounds.
\begin{equation}\label{eq12-01-09-2020}
|v|_{N-5}\leq CC_1\vep (s/t)^2s^{-1+CC_1\vep},
\end{equation}
\begin{equation}\label{eq18-01-09-2020}
|\del \del w|_{N-5}\leq CC_1\vep s^{-1 + CC_1\vep},
\end{equation}
\begin{equation}\label{eq11-01-09-2020}
(s/t)|\del v| + |v| \leq CC_1\vep (s/t)^2s^{-1},
\end{equation}
\begin{equation}\label{eq13-01-09-2020}
|\del\del w|\leq CC_1\vep s^{-1}.
\end{equation}

\paragraph*{Proof of \eqref{eq12-01-09-2020}.} This is the most easy one. It is based on \eqref{eq10-01-09-2020} and parallel to \eqref{eq16-15-08-2020}. In fact we recall \eqref{eq2 prop fast-KG} and \eqref{eq4-08-10-2020},
$$
\aligned
c^2|v|_{N-5}\leq& CC_1\vep (s/t)^2|\del v|_{N-4} + C|\Box v + c^2v|_{N-5}
\\
\leq& CC_1\vep (s/t)^2|\del v|_{N-4} + CC_1\vep |v|_{N-5}.
\endaligned
$$
Then by \eqref{eq10-01-09-2020} and \eqref{eq7-08-10-2020}, i.e. with $C_1\vep$ sufficiently small, \eqref{eq12-01-09-2020} is established.

\paragraph*{Proof of \eqref{eq18-01-09-2020}.} This is a direct consequence of \eqref{eq10-01-09-2020}. We apply Proposition \ref{prpo2 wave-sharp} applied on the equation satisfied by $\del_{\alpha}u$:
\begin{equation}\label{eq6-08-09-2020}
\Box \del_{\alpha}\del^IL^J w = \del_{\alpha}\del^IL^J(v^2).
\end{equation}
Following the notation of Proposition \ref{prpo2 wave-sharp}, one has
$$
\Delta^w[\del_{\alpha}\del^IL^J w] = t^{1/2}(t-r)^{1/2}\frac{t^2}{t^2+r^2}\sum_a\delu_a\delu_a\del_{\alpha}\del^IL^J w
$$
and thus for $|I|+|J|\leq N-4$ \footnote{Here we have applied $|\dels\dels u|_{p,k}\leq Ct^{-2}|u|_{p+2,k+2}$. This can be observed by homogeneity. A proof can be found in \cite{M-2020-strong}.}, 
$$
|\Delta^w[\del_{\alpha}\del^IL^J w]|\leq Cs|\dels\dels\del_{\alpha} w|_{N-4}\leq C(s/t)^2s^{-1}|\del w|_{N-2}
$$
which leads to, thanks to \eqref{eq10-15-08-2020}
\begin{equation}\label{eq4-08-09-2020}
|\Delta^w[\del_{\alpha}\del^IL^J w]|\leq CC_1\vep (s/t)^2s^{-2+\delta} \leq CC_1\vep t^{-2+\delta}.
\end{equation}
Remark that it is integrable with respect to $t$. 

On the other hand, recall the definition, for $|I|+|J|\leq N-4$,
$$
|S^w[\del_{\alpha} \del^IL^J w]| \leq Cs|v\del v|_{N-4}.
$$
By \eqref{eq10-01-09-2020},
\begin{equation}\label{eq5-08-09-2020}
|S^w[\del_{\alpha} \del^IL^J w]|\leq C(C_1\vep)^2t^{-1}s^{CC_1\vep}\leq C(C_1\vep)^2t^{-1+CC_1\vep}.
\end{equation}

Now recall \eqref{eq1-29-05-2020}, for $|I|+|J|\le N-4$
$$
\aligned
|s\del_t\del_{\alpha}\del^IL^J w(t,x)|\leq& CC_0\vep + CC_1\vep\int_2^t\tau^{-2+\delta}d\tau + C(C_1\vep)^2\int_2^t\tau^{-1+CC_1\vep}d\tau
\\
\leq& CC_1\vep t^{CC_1\vep}\leq CC_1\vep s^{CC_1\vep} .
\endaligned
$$
This concludes \eqref{eq18-01-09-2020}.
\paragraph*{Proof of \eqref{eq11-01-09-2020}.} This is based on Proposition \ref{lem1'-01-09-2020}. We write the Klein-Gordon equation of \eqref{eq-main} into the form of \eqref{eq'14-01-09-2020}:
\begin{equation}\label{eq7-08-09-2020}
\Box v + c^2\Big(1 - \underbrace{c^{-2}\big(B^{\alpha}\del_{\alpha}w_0 + Kw_0 + A^{\alpha\beta}\del_{\alpha}\del_\beta w\big)}_{\omega}\Big) v = 0.
\end{equation}
Following the notation of Proposition \ref{lem1'-01-09-2020}, $f = 0$ and $\omega$ is defined by the above expression. We remark that
$$
\aligned
&|\del \omega|\leq C\big(|\del w_0| + |\del\del w_0| + |\del^3 w|\big),
\\
&|\dels \omega|\leq C\big(|\dels w_0| + |\del\dels w_0| + |\del\del\dels w|\big).
\endaligned
$$
By \eqref{eq8-08-09-2020}, \eqref{eq9-08-09-2020}, \eqref{eq13-15-08-2020} and the following observation:
\begin{equation}\label{eq1-09-09-2020}
|\del\dels w|_{N-3}\leq Ct^{-1}|\del w|_{N-2}\leq CC_1\vep (s/t)s^{-2+\delta},
\end{equation}
one has
\begin{equation}\label{eq2-09-09-2020}
|\del \omega|\leq CC_1\vep (s/t)^{-1}s^{-2 + 2\delta},\quad |\dels \omega| \leq CC_1\vep (s/t)s^{-2+\delta}.
\end{equation}
The key is that both are integrable with respect to $s$ if we omit the conical decay. Then following the notation of Proposition \ref{lem1'-01-09-2020}, when $0\leq r/t\leq 3/5$
$$
s|v|(t,x) + s|((s/t)\del_t + (x^a/s)\delu_a) v|(t,x)\leq CC_0\vep + CC_1\vep\int_2^s \lambda^{-2+3\delta}d\lambda \leq CC_1\vep.
$$
When $3/5\leq r<1$, we need to apply \eqref{eq16-15-08-2020},
$$
\aligned
&s|v|(t,x) + s|((s/t)\del_t + (x^a/s)\delu_a) v|(t,x)
\\
\leq& CC_1\vep (s/t)^2\int_{\lambda_0}^s\lambda^{-2+\delta}d\lambda
+ C(C_1\vep)^2(s/t)^{-1}\int_{\lambda_0}^s(s/t)^2\lambda^{\delta}\ (s/t)\lambda^{-2+2\delta}d\lambda 
\\
\leq& CC_1\vep (s/t)^2\lambda_0^{-1+2\delta}.
\endaligned
$$
Remark that $\lambda_0\simeq (s/t)^{-1}$, we obtain:
$$
|v|(t,x) + |((s/t)\del_t + (x^a/s)\delu_a) v|(t,x) \leq CC_1\vep (s/t)^{3-2\delta}s^{-1}
$$
which gives the bound on $|v|$. Furthermore,
$$
(s/t)|\del_t v|(t,x)\leq CC_1\vep(s/t)^{3-2\delta}s^{-1} + CC_1\vep(s/t)^{-1}t^{-1}|v|_{1,1}\leq CC_1\vep (s/t)^2s^{-1}
$$
which show the bound on $|\del_t v|$. Recall that
$$
|\del_a v| = |t^{-1}L_av - (x^a/t)\del_t v|\leq Ct^{-1}|v|_{1,1} + |\del_tv|\leq CC_1\vep (s/t)s^{-1}.
$$ 
This leads to \eqref{eq11-01-09-2020}.

\paragraph*{Proof of \eqref{eq13-01-09-2020}.} This is the most critical one. We rely on  Proposition \ref{prpo2 wave-sharp}. Remark that
\begin{equation}\label{eq3-09-09-2020}
\Box \del_{\alpha} w = 2v\del_{\alpha}v.
\end{equation}
Then \eqref{eq4-08-09-2020} is still applicable. Furthermore, based on \eqref{eq11-01-09-2020}, 
\begin{equation}\label{eq4-09-09-2020}
|S^w[\del_{\alpha} w]|\leq C(C_1\vep)^2 (s/t)^3s^{-1} = C (C_1\vep)^2 (s/t)^2 t^{-1}.
\end{equation}
Here the conical factor $(s/t)^2$ in the above bound is crucial. This bound permits us to compare $S^w[\del_{\alpha}w]$ with $P(t,r)$ in Proposition \ref{prpo2 wave-sharp} and prevents a logarithmic loss.

Substitute \eqref{eq4-08-09-2020} and \eqref{eq4-09-09-2020} into \eqref{eq1-29-05-2020} and consider a point $(\bar{t},\bar{x})\in \Hcal_{\bar{s}}$,
$$
\aligned
&\bar{s}|\del_t\del_{\alpha} w|(\bar{t},\bar{x})
\\
\leq& CC_0\vep 
+ C(C_1\vep)^2\int_2^{\bar{t}} (s/t)^2 t^{-1}\big|_{\gamma(t;\bar{t},\bar{x})} e^{-\int_{t}^{\bar{t}}P_{\bar{t},\bar{x}}(\eta)d\eta} + CC_1\vep \int_2^{\bar{t}} t^{-2+\delta}dt
\\
\leq& CC_1\vep + C(C_1\vep)^2\int_2^{\bar{t}}(s/t)^2t^{-1}\big|_{\gamma(t;\bar{t},\bar{x})}e^{-\int_{t}^{\bar{t}}(s/t)^2t^{-1}\big|_{\gamma(\eta;\bar{t},\bar{x})}d\eta}dt
\\
\leq& CC_1\vep. 
\endaligned
$$
For the rest components of Hessian form, say $|\del_a\del_\alpha u|$, we have
 $$
\aligned
s|\del_a\del_{\alpha} w|(t,x)&=s |(\delu_a-(x^a/t)\del_t)\del_\alpha w|
                             \leq s|\delu_a\del_\alpha w|+s|\del_t\del_\alpha w|\\
                             &\leq s|\delu_a\del_\alpha w|+CC_1\vep
                             \leq (s/t)|L_a\del_\alpha w|+CC_1\vep\\
                             &\leq CC_1\vep.
\endaligned
$$ 
This concludes \eqref{eq13-01-09-2020}.

\subsection{Improving the energy bounds}
\label{subsec-model-improved}
Equipped with sharp bounds \eqref{eq12-01-09-2020} - \eqref{eq13-01-09-2020}, we are ready to improve \eqref{eq8-15-08-2020}.  
\paragraph*{Energy estimate on wave component.} We remark that
\begin{equation}\label{eq1-11-09-2020}
\aligned
\||v^2|_p\|_{L^2(\Hcal_s)} + \||v\del v|_p\|_{L^2(\Hcal_s)}\leq& CC_1\vep s^{-1}\Ecal_{0,c}^p(s,v)^{1/2} 
\\
&+ CC_1\vep s^{-1+CC_1\vep}\Ecal_{0,c}^{p-1}(s,v)^{1/2}.
\endaligned
\end{equation}
This is direct by \eqref{eq10-01-09-2020} and \eqref{eq11-01-09-2020}. 
$$
\aligned
&\||v\del v|_p\|_{L^2(\Hcal_s)}
\\
\leq& C\||v||\del v|_p\|_{L^2(\Hcal_s)} + \||\del v||v|_p\|_{L^2(\Hcal_s)} 
\\
&+ \||v|_{N-4}|\del v|_{p-1}\|_{L^2(\Hcal_s)} + \||v|_{p-1}|\del v|_{N-4}\|_{L^2(\Hcal_s)}
\\
\leq& CC_1\vep s^{-1}\|(s/t)|\del v|_p\|_{L^2(\Hcal_s)} + CC_1\vep s^{-1}\||v|_p\|_{L^2(\Hcal_s)}
\\
& + CC_1\vep s^{-1+CC_1\vep}\|(s/t)|\del v|_{p-1}\|_{L^2(\Hcal_s)}
  + CC_1\vep s^{-1+CC_1\vep}\||v|_{p-1}\|_{L^2(\Hcal_s)}
\endaligned
$$
which leads to the bound on $v\del v$. The bound on $v^2$ is similar. We omit the detail. 

Then recall Proposition \ref{prop 1 energy}, and apply it on to:
$$
\Box \del^IL^J w = \del^IL^J (v^2)
$$
and
$$
\Box \del^IL^J\del_{\alpha}u = \del^IL^J\del_{\alpha}(v^2)
$$
with $|I|+|J|\leq p$. We obtain:
$$
\aligned
&E_0(s,\del^IL^J w)^{1/2} + \sum_{\alpha} E_0(s,\del^IL^J\del_{\alpha}w)^{1/2} 
\\
\leq&  E_0(2,\del^IL^J w)^{1/2} + \sum_{\alpha} E_0(2,\del^IL^J\del_{\alpha}w)^{1/2}
\\ 
&+ C\int_2^s\|\del^IL^J(v^2)\|_{L^2(\Hcal_\tau)}d\tau + C\sum_{\alpha}\int_2^s\|\del^IL^J\del_{\alpha}(v^2)\|_{L^2(\Hcal_\tau)}d\tau
\\
\leq& CC_1\vep s^{-1}\Ecal_0^p(s,v)^{1/2} + CC_1\vep s^{-1+CC_1\vep}\Ecal_0^{p-1}(s,v)^{1/2}.
\endaligned
$$
So we conclude that
\begin{equation}\label{eq2-11-09-2020}
\aligned
&\Ecal_0^p(s,w)^{1/2} + \sum_{\alpha}\Ecal_0^p(s,\del_{\alpha} w)^{1/2} 
\\
\leq& \Ecal_0^p(2,w)^{1/2} + \sum_{\alpha}\Ecal_0^p(2,\del_{\alpha} w)^{1/2}
+ CC_1\vep \int_2^s\tau^{-1}\Ecal_{0,c}^p(\tau, v)^{1/2}d\tau
\\
&+ CC_1\vep\int_2^s\tau^{-1+CC_1\vep}\Ecal_{0,c}^{p-1}(\tau,v)^{1/2}d\tau.
\endaligned
\end{equation}
Then, write \eqref{eq6-01-09-2020} into the following form ($C_0\leq C_1$ and $C_1\vep\leq1$):
\begin{equation}\label{eq2-09-10-2020}
\Ecal_{0,c}^{N-2}(s,v)^{1/2}\leq CC_1\vep s^{CC_1\vep}\leq CC_1\vep s^{C(C_1\vep)^{1/2}}
\end{equation}
and substitute it into the above expression, we obtain:
\begin{equation}\label{eq3-09-10-2020}
\Ecal_0^{N-2}(s,w)^{1/2} + \sum_{\alpha}\Ecal_0^p(s,\del_{\alpha} w)^{1/2}  \leq C_0\vep +  C(C_1\vep)^{3/2}s^{C(C_1\vep)^{1/2}}.
\end{equation}

\paragraph*{Energy estimate on Klein-Gordon component.} 
This is also by Proposition \ref{prop 1 energy}. We will establish
\begin{equation}\label{eq3-11-09-2020}
\aligned
\||w_0 v|_p\|_{L^2(\Hcal_s)} +& \||v\del w_0|_p\|_{L^2(\Hcal_s)} + \||v\del\del w|_p\|_{L^2(\Hcal_s)} 
\\
\leq& 
CC_1\vep s^{-1}\Ecal_{0,c}^p(s,v)^{1/2} + CC_1\vep s^{-1}\sum_{\alpha}\Ecal_0^p(s,\del_{\alpha} w)^{1/2} 
\\
&+ C(C_1\vep)^{5/3}s^{-1+C(C_1\vep)^{1/3}}.
\endaligned
\end{equation}
For the first term,
$$
\aligned
&\||w_0v|_p\|_{L^2(\Hcal_s)}
\\
\leq& C\||w_0|_{N-2}|v|_p\|_{L^2(\Hcal_s)} + C\||v|_{N-4}|w_0|_{p-1}\|_{L^2(\Hcal_s)} 
 + C\||v||w_0|_p\|_{L^2(\Hcal_s)}
\\
\leq& CC_1\vep s^{-1}\||v|_p\|_{L^2(\Hcal_s)} + CC_1\vep s^{-1+CC_1\vep}\|(s/t)|w_0|_{p-1}\|_{L^2(\Hcal_s)} 
\\
&+ CC_1\vep s^{-1}\|(s/t)|w_0|_p\|_{L^2(\Hcal_s)}
\\
\leq& CC_1\vep s^{-1}\Ecal_{0,c}^p(s,v)^{1/2} + CC_1\vep s^{-1} \Fcal_2^p(2;s,w_0)
\\
 &+ CC_1\vep s^{-1+CC_1\vep}\Fcal_2^{p-1}(2;s,w_0).
\endaligned
$$
Here $\|(s/t)|w_0|_p\|_{L^2(\Hcal_s)}$ is bounded by $\Fcal_2^p(2;s, w_0)$. The latter is bounded as (recalling \eqref{eq1-09-10-2020}),
$$
\aligned
\Fcal_2^p(2;s,w_0)\leq& \|(s/t)|w_0|_p\|_{L^2(\Hcal_s)} + \Ecal_2^p(s,w_0)^{1/2} + \int_2^s\tau^{-1}\Ecal_2^p(\tau,w_0)^{1/2}d\tau
\\
\leq& CC_0\vep + CC_1\vep \int_2^s\tau^{-1 + (C_1\vep)^{1/3}} d\tau\leq C(C_1\vep)^{2/3} s^{(C_1\vep)^{1/3}}.
\endaligned
$$
Then
$$
\||w_0v|_p\|_{L^2(\Hcal_s)}\leq CC_1\vep s^{-1}\Ecal_{0,c}^p(s,v)^{1/2} + C(C_1\vep)^{5/3}s^{-1+C(C_1\vep)^{1/3}.}
$$

The second term is bounded as following:
$$
\aligned
\||v\del w_0|_p\|_{L^2(\Hcal_s)}\leq& C\||\del w_0|_{N-3}|v|_p\|_{L^2(\Hcal_s)} + C\||v|_{N-4}|\del w_0|_p\|_{L^2(\Hcal_s)} 
\\
\leq& CC_1\vep s^{-1}\Ecal_{0,c}^p(s,v)^{1/2} + CC_1\vep s^{-1+CC_1\vep}\|(s/t)|\del w_0|_p\|_{L^2(\Hcal_s)}
\\
\leq& CC_1\vep s^{-1}\Ecal_{0,c}^p(s,v)^{1/2} + C(C_1\vep)^{5/3}s^{-1+C(C_1\vep)^{1/3}}.
\endaligned
$$

For the last term,
$$
\aligned
&\||v\del\del w|_p\|_{L^2(\Hcal_s)}
\\
\leq& C\||v||\del\del w|_p\|_{L^2(\Hcal_s)} + C\||\del\del w||v|_p\|_{L^2(\Hcal_s)}
\\
& + C\||v|_{N-3}|\del\del w|_{N-1}\|_{L^2(\Hcal_s)} + C\||v|_{N-1}|\del\del w|_{N-3}\|_{L^2(\Hcal_s)}
\\
\leq& CC_1\vep s^{-1} \|(s/t)|\del\del w|_p\|_{L^2(\Hcal_s)} + CC_1\vep s^{-1}\||v|_p\|_{L^2(\Hcal_s)}
\\
& + CC_1\vep s^{-2+\delta}\|(s/t)^2s|\del\del w|_{N-1}\|_{L^2(\Hcal_s)} 
+ CC_1\vep s^{-2+\delta}\|(s/t)^{-1}|v|_{N-1}\|_{L^2(\Hcal_s)}
\\
\leq& CC_1\vep s^{-1}\Big(\sum_{\alpha}\Ecal_0^p(s,\del_{\alpha}w)^{1/2} + \Ecal_{0,c}^p(s,v)^{1/2}\Big)
 + C(C_1\vep)^2s^{-2+3\delta}.
\endaligned
$$
Here for the second inequality we have applied \eqref{eq11-01-09-2020}, \eqref{eq13-01-09-2020}, \eqref{eq16-15-08-2020} and \eqref{eq13-15-08-2020}. For the last inequality, \eqref{eq12-15-08-2020} and \eqref{eq17-15-08-2020} are applied.

Substitute these bounds into \eqref{ineq 3 prop 1 energy},
$$
\aligned
\Ecal_{0,c}^p(s,v)^{1/2}\leq&\Ecal_{0,c}^p(2,v)^{1/2} 
\\
&+ C\int_2^s\big(\||v\del w_0|_p\|_{L^2(\Hcal_\tau)} + \||vw_0|_p\|_{L^2(\Hcal_\tau)} + \||v\del\del w|_p\|_{L^2(\Hcal_\tau)}d\tau
\endaligned
$$
which leads to
\begin{equation}\label{eq4-11-09-2020}
\aligned
\Ecal_{0,c}^p(s,v)^{1/2}\leq& \Ecal_{0,c}^p(2,v)^{1/2} + C(C_1\vep)^{4/3}s^{C(C_1\vep)^{1/3}} 
\\
&+ CC_1\vep\int_2^s\tau^{-1}\Big(\sum_{\alpha}\Ecal_0^p(s,\del_{\alpha}w)^{1/2} + \Ecal_{0,c}^p(s,v)^{1/2}\Big)d\tau.
\endaligned
\end{equation}
\paragraph*{Inductional argument.}
For convenience, we denote by
$$
A^p(s) := \max\Big\{\sum_{\alpha}\Ecal_0^p(s,\del_{\alpha} w)^{1/2}, \Ecal_0^p(s,w)^{1/2}, \Ecal_{0,c}^p(s,v)^{1/2} \Big\}.
$$
Then \eqref{eq2-11-09-2020} and \eqref{eq4-11-09-2020} lead to
\begin{equation}\label{eq5-11-09-2020}
\aligned
A^p(s) \leq& A^p(2) + C(C_1\vep)^{4/3} s^{C(C_1\vep)^{1/3}}
\\
&+ CC_1\vep \int_2^s\tau^{-1}A^p(\tau)d\tau + CC_1\vep\int_2^s\tau^{-1+CC_1\vep}A^{p-1}(\tau)d\tau
\endaligned
\end{equation}
for $0\leq p\leq N$. Recall that \eqref{eq2-09-10-2020} and \eqref{eq3-09-10-2020} shows that
\begin{equation}
A^{N-2}(s)\leq CC_1\vep s^{C(C_1\vep)^{1/2}}.
\end{equation}

Now we concentrate on the case $p=N-1$. 
$$
\aligned
A^{N-1}(s)\leq& C_0\vep + C(C_1\vep)^{4/3}s^{C(C_1\vep)^{1/3}} + C(C_1\vep)^2\int_2^s\tau^{-1}A^{N-1}(\tau)d\tau
\\
&+ CC_1\vep\int_2^s\tau^{-1+CC_1\vep}A^{N-2}d\tau
\\
\leq& C_0\vep + C(C_1\vep)^{4/3}s^{C(C_1\vep)^{1/3}} +  C(C_1\vep)\int_2^s\tau^{-1}A^{N-1}(\tau)d\tau.
\endaligned
$$
Gronwall's inequality leads to
\begin{equation}\label{eq4-09-10-2020}
A^{N-1}(s)\leq C_0\vep(s/2)^{CC_1\vep} + C(C_1\vep)^{4/3}s^{C(C_1\vep)^{1/3}}\leq CC_1\vep s^{C(C_1\vep)^{1/3}}.
\end{equation}

Then taking $p=N$ and the above bound on $A^{N-1}$, do again the above argument and we obtain:
\begin{equation}
A^N(s)\leq C_0\vep(s/2)^{CC_1\vep} + C(C_1\vep)^{4/3}s^{C(C_1\vep)^{1/3}}.
\end{equation}

\subsection{Conclusion of bootstrap argument.}
Taking
\begin{equation}\label{eq12-09-10-2020}
C_1>2C_0,\quad \vep\leq \delta^3/(C^3C_1),\quad \vep \leq \frac{(C_1-2C_0)^3}{8C^3C_1^4},\quad \vep\leq \frac{c^2}{2CC_1},
\end{equation}
one guarantees the following bounds:
$$
C(C_1\vep)\leq \frac{c^2}{2},
\quad
C(C_1\vep)^{1/3}\leq \delta, 
\quad
C_0\vep + C(C_1\vep)^{4/3}\leq \frac{1}{2}C_1\vep.
$$
So  we guarantees \eqref{eq7-08-10-2020} and
$$
A^N(s) = \max\Big\{\Ecal_0^N(s,w)^{1/2}, \Ecal_{0,c}^N(s,v)^{1/2}, \sum_{\alpha}\Ecal_0^N(s,\del_{\alpha} w)^{1/2}\Big\}\leq \frac{1}{2}C_1\vep s^{\delta}.
$$
This closes the bootstrap argument.

\subsection{Application on Klein-Gordon-Zakharov system}
Clearly, \eqref{eq1-Zakharov} is in the form of \eqref{eq1b-main}. So Theorem \ref{thm-main} applies directly and we conclude the global existence result for \eqref{eq1-Zakharov} with small localized regular initial data.

\section{Return to the totally geodesic wave map system}\label{sec-conclusion-wave-map}
\subsection{The stability problem of a type of totally geodesic wave maps}\label{subsec-wave-map}
A detailed explanation and formulation can be found in \cite{Ab-2019}. Here we only give an outline.

Let $\RR^{2+1}$ be the standard $2+1$ dimensional Minkowski space-time with signature $(-,+,+)$. Let $(M,g)$ be a $n-$dimensional space-form. Consider a map
$\RR^{2+1}\stackrel{\phi}{\longrightarrow}M$. This map is called {\sl wave map} if it is a critical point of the following action:  
\begin{equation}
S[\phi] = \int_{\RR^{2+1}}\langle d\phi, d\phi \rangle_{T^*\RR^{2+1}\otimes \phi^{-1}TM}\, d\text{vol}_{\RR^{2+1}}.
\end{equation}
Then $\phi$ satisfies the following Euler-Lagrangian equation:
\begin{equation}\label{eq1-04-10-2020}
\Box_m \phi^i + \Gamma_{jk}^i(\phi) \m^{\mu\nu}\del_{\mu}\phi^j\del_{\nu}\phi^k = 0
\end{equation}
where $\m$ is the Minkowski metric defined on $\RR^{2+1}$. $\Box_\m = m^{\alpha\beta}\del_{\alpha}\del_{\beta} = -\del_t^2 + \Delta_{\RR^2}$. $\Gamma_{ij}^k(\phi)$ are the Christoffel symbols of $(M,g)$ evaluated along the image of $\phi$. 

We consider a wave map from $\RR^{2+1}$ to $(M,g)$ with the following factorization:
$$
\varphi:\RR^{2+1}\stackrel{\varphi_S}{\longrightarrow}\RR\stackrel{\varphi_I}{\longrightarrow} M.
$$
Here $\varphi_S$ is a semi-Riemannian submersion to either $(\RR,e)$ or $(\RR,-e)$ and $\varphi_I$ is a immersion from $(\RR,e)$ to $(M,g)$. By \cite{ES64} and \cite{Vil70}, the above factorization implies that $\varphi = \varphi_I\circ\varphi_S$ is totally geodesic. Then one consider the stability problem of $\varphi$. Furthermore, regarding \cite{Vil70}, $\varphi_S$ is prescribed to be a linear function $\RR^{2+1}\rightarrow \RR$ with $m(d\varphi_S,d\varphi_S) = \pm 1$. Then $\varphi_I$ is a immersed geodesic on $(M,g)$.

The	quantitative formulation and analysis of this problem is based on the geodesic normal coordinates.  It permits one to parameterize a tubular neighborhood of an arbitrary geodesic, in which the Christoffel symbols vanish along the geodesic.  Let us give a brief description. Let $(M,g)$ be a complete Riemannian manifold and $\gamma:\RR\rightarrow M$ be a fixed geodesic. We parameterize it with arc-length. At $\gamma(0)$, let $\vec{e}_1 = \dot\gamma(0)$ and
$$
e^{\perp} := (\vec{e}_2,\cdots, \vec{e}_n),\quad \vec{e}_i\perp \vec{e}_j,\quad |\vec{e}_i|=1.
$$
For $x^1\in \RR$, define $\vec{e}_i$ by parallel transporting along $\gamma$. This forms an normalized orthogonal frame along $\gamma$.  Let $\exp_{\gamma(x^1)}(t\vec{v})$ be the geodesic satisfying
$$
\frac{d}{dt}\exp_{\gamma(x^1)}(t\vec{v})\big|_{t=0} = \vec{v},\quad \exp_{\gamma(x^1)}(t\vec{v})\big|_{t=0} = \gamma(x^1).
$$ 
with $\vec{v}\in \dot\gamma(x^1)^{\perp}$ and for $(x^1,\bar{x}) = (x^1,x^2,\cdots x^n)$ with $|\bar{x}|$ sufficiently small,
$$
\sigma: (x^1,\bar{x})\rightarrow \exp_{\gamma(x^1)}\Big(\sum_{j=2}^nx^j\vec{e}_j\Big)
$$
gives a parameterization of the tubular neighborhood of $\gamma$. This is called the geodesic normal coordinates. Due to the assumption that $(M,g)$ being a space-form, this coordinate system is well defined in $(-\infty,\infty)\times \{|\bar{x}|< \delta\}$ with a fixed $\delta>0$ (which is the {\sl focal radius}).

With this geodesic normal coordinates, a perturbation of $\varphi$ is described as following (see in detail \cite{Ab-2019}). We construct the above geodesic normal coordinates in a tubular neighborhood of $\varphi_I(\RR)$.  Then $\varphi$ is written as
$$
\RR^{2+1}\stackrel{\varphi_S}{\longrightarrow}\RR\stackrel{\varphi_I}{\longrightarrow} M,\quad (t,x)\rightarrow \sigma\big(\varphi_S(t,x),0\big).
$$
Then we perturb $\varphi$ as following, consider 
$$
\tilde{\varphi} : \RR^{2+1}\rightarrow M,\quad (t,x)\rightarrow \sigma\big(\varphi_S(t,x) + \phi^1(t,x),\phi^k(t,x)\big), \quad k=2,\cdots, n
$$ 
and demand that $\tilde{\varphi}$ is again a wave map. This leads to, thanks to \eqref{eq1-04-10-2020} and the fact that $\varphi_S$ being linear,
\begin{equation}\label{eq15-07-10-2020}
\begin{aligned}
&\Box_m \phi^1 + \Gamma_{jk}^1(\varphi_S+\phi^1,\phi^k)\cdot \m(\bar{\phi}^j,\bar{\phi}^k) = 0,
\\
&\Box_m \phi^i + \Gamma_{jk}^i(\varphi_S+\phi^1,\phi^k)\cdot \m(\bar{\phi}^j,\bar{\phi}^k) = 0,
\quad  i = 2,\cdots, n
\end{aligned}
\end{equation}
\footnote{Here $\Box_m := m^{\alpha\beta}\nabla_{\alpha}\nabla_{\beta} = -\del_t^2-\sum_a\del_a^2$.} with $\bar{\phi}^1 = \varphi_S(t,x) + \phi^1, \bar{\phi}^k = \phi^k, k=2,3,\cdots n$.

Then  one develops the nonlinear terms $\Gamma_{jk}^1(\varphi_S+\phi^1,\phi^k)\cdot \m(\bar{\phi}^j,\bar{\phi}^k)$ into Taylor series at each point of $\varphi_I(\RR)$, i.e., at $(\phi^1,\phi^k) = 0$. Since (due to the construction of the geodesic normal coordinates) $\Gamma_{jk}^i\equiv 0$ along $\varphi_I(\RR)$, 
\begin{equation}\label{eq16-07-10-2020}
\del_1^q\Gamma_{jk}^i(\varphi_S,0) = 0,\quad q=0,1,2,\cdots
\end{equation}
This leads to the fact that in the development of $\Gamma$ there is no monominal containing $\phi^1$.
Furthermore,
$$
\aligned
\m(d\bar{\phi}^j,d\bar{\phi}^k) 
=&\m(d\varphi_S,d\varphi_S)\delta_1^i\delta_1^j
+ \m(d\varphi_S,d\phi^k)\delta_1^i
+ \m(d\phi^j,d\varphi_S)\delta_1^k
+ \m(d\phi^j,d\phi^k)
\\
=& \pm \delta_1^i\delta_1^j
+ \m(d\varphi_S,d\phi^k)\delta_1^i
+ \m(d\phi^j,d\varphi_S)\delta_1^k
+ \m(d\phi^j,d\phi^k).
\endaligned
$$
Then following the procedure in Section 3 of \cite{Ab-2019},  $(\phi^1,\phi^k)$ satisfies the following system \footnote{Recall that we have taken $\Box  = \del_t^2 - \sum_a\del_a^2$.} when $(M,g)$ is of sectional curvature $\equiv -1$:
\begin{subequations}\label{eq2-04-10-2020}
	\begin{equation}\label{eq2a-04-10-2020}
	\aligned
	-\Box \phi^1 = & -2\sum_{k=2}^n\phi^k\del_t\phi^k - 2\sum_{k,l=2}^n\del_k\del_l\Gamma_{j1}^1(\varphi_S,0)\phi^k\phi^l\cdot \m(d \phi^j,d\varphi_S)
	\\
	& + \sum_{k=2}^n\del_k\Gamma_{jl}^1(\varphi_S,0)\phi^k\cdot \m(d\phi^j,d\phi^l)
	+ \sum_{k,j,l=2}^n\del_k\del_j\del_l\Gamma_{11}^1(\varphi_S,0)\phi^k\phi^j\phi^l
	\\
	& +\text{ h.o.t.}
	\\
	-\Box \phi^k - \phi^k =& 2\phi^k\del_t \phi^1 
	- 2\sum_{k,l=2}^n\del_k\del_l\Gamma_{j1}^i(\varphi_S,0)\phi^k\phi^l\cdot \m(d\phi^j,d\varphi_S)
	\\
	& + \sum_{k=2}^n\del_k\Gamma_{jl}^1(\varphi_S,0)\phi^k\cdot \m(d\phi^j,d\phi^l) 
	- \sum_{k,j,l=2}^n\del_k\del_j\del_l \Gamma_{11}^i(\varphi_S,0)\phi^k\phi^j\phi^l 
	\\
	&+ \text{h.o.t.}
	\endaligned
	\end{equation}
	And when $(M,g)$ is of sectional curvature $\equiv 1$,
	\begin{equation}\label{eq2b-04-10-2020}
	\aligned
	-\Box \phi^1 = & 2\sum_{k=2}^n\phi^k\del_1\phi^k - 2\sum_{k,l=2}^n\del_k\del_l\Gamma_{j1}^1(\varphi_S,0)\phi^k\phi^l\cdot \m(d \phi^j,d\varphi_S)
	\\
	& + \sum_{k=2}^n\del_k\Gamma_{jl}^1(\varphi_S,0)\phi^k\cdot \m(d\phi^j,d\phi^l)
	- \sum_{k,j,l=2}^n\del_k\del_j\del_l\Gamma_{11}^1(\varphi_S,0)\phi^k\phi^j\phi^l 
	\\
	&+ \text{h.o.t.}
	\\
	-\Box \phi^k - \phi^k =& -2\phi^k\del_1 \phit^1
	- 2\sum_{k,l=2}^n\del_k\del_l\Gamma_{j1}^i(\varphi_S,0)\phi^k\phi^l\cdot \m(d\phi^j,d\varphi_S)
	\\
	& + \sum_{k=2}^n\del_k\Gamma_{jl}^1(\varphi_S,0)\phi^k\cdot \m(d\phi^j,d\phi^l) 
	- \sum_{k,j,l=2}^n\del_k\del_j\del_l \Gamma_{11}^i(\varphi_S,0)\phi^k\phi^j\phi^l 
	\\
	&+ \text{h.o.t.}
	\endaligned
	\end{equation}
\end{subequations}

We summarize the key structures of the above two systems. Firstly, in both cases, the quadratic terms coupled in wave equation are in divergence form. Secondly, as consequence of Lemma 2.5 of \cite{Ab-2019}, the coefficients $\del^k\Gamma(\varphi_S,0)$ can be regarded as universal constants. Remark that in order to guarantee global existence in $\RR^{2+1}$, we must also analyze the cubic terms (this is explained in \cite{A2} in pure wave case). In \eqref{eq2-04-10-2020} we are sufficiently lucky such that in both cases and both wave and Klein-Gordon equations, the cubic terms are either null cubic forms or containing at least two Klein-Gordon factors. Finally, the higher order terms can be written as linear combinations of
\begin{equation}\label{eq18-07-10-2020}
\aligned
&\phi^j\phi^k \m(d\phi^a,d\phi^b)\cdot O(\phi),\quad \phi^j\phi^k\phi^l\del\phi^c\cdot O(\phi), 
\\
&1\leq a,b,c \leq n,\quad  2\leq j,k,l \leq n
\endaligned
\end{equation}
with coefficients $\del^q\Gamma(\varphi_S,0)$ which can be regarded as universal constants due the the Lemma 2.5 of \cite{Ab-2019} and the remark made below equation (5.2) therein. The important structure is the two Klein-Gordon factors. This is due to \eqref{eq16-07-10-2020}. 

\subsection{Formulation of the auxiliary system and statement of main result}
This subsection is devoted to the construction of the auxiliary system to \eqref{eq2-04-10-2020}.  We will only regard the case of negative sectional curvature. The positive case is similar, we omit the detail. Firstly, we write \eqref{eq2a-04-10-2020} into the following form:
\begin{equation}\label{eq1-02-09-2020}
\aligned
&\Box \phi = -2\sum_{k=2}^n\phi^k \del_t\phi^k + S_W[\phi],
\\
&\Box \phi^k + \phi^k = 2\phi^k \del_t\phi^1 + S_{GK}^{k}[\phi] , \quad 2\leq k\leq n,
\\
&\phi^1(2,x) = \phi^1_0(x),\quad \del_t \phi^1(2,x) = \phi^1_1(x),
\\
&\phi^k(2,x) = \phi^k_0(x),\quad \del_t \phi^k(2,x) = \phi^k_1(x),\quad 2\leq k\leq n.
\endaligned
\end{equation}
Here $S_W$ and $S^k_{KG}$ contains the third and higher order terms. By introducing the shifted primitive of $\phi^1$ up to second order:
\begin{equation}\label{eq2-05-10-2020}
\phi^1 = \del_t w + w_0,
\end{equation}
we arrive at the following auxiliary system:
\begin{equation}\label{eq4-02-09-2020}
\left\{
\aligned
&\Box w = - \sum_{k=2}^n|\tilde{\phi}^k|^2,
\\
&\Box w_0 = S_W[(\del_tw + w_0),\tilde{\phi}^k],
\\
&\Box \tilde{\phi}^k + \tilde{\phi}^k = 2\tilde{\phi}^k\del_t\big(\del_t w + w_0\big) + S_{KG}^{k}[(\del_t w + w_0),\tilde{\phi}^k],\quad 2\leq k\leq n
\endaligned
\right.
\end{equation}
with initial data
\begin{equation}\label{eq5-02-09-2020}
\aligned
&w (2,x) = 0,\quad \del_t w(2,x) = 0,\quad \tilde{\phi}^k(2,x) = \phi^k_0(x),\quad \del_t\tilde{\phi}^k(2,x) = \phi^k_1(x)
\\
& w_0(2,x) = u_0(x), \quad \del_t w_0(2,x) = u_1(x) + \sum_{k=2}^n|\tilde{\phi}^k_0(x)|^2.
\endaligned
\end{equation}
Parallel to Lemma \ref{lem1-05-10-2020}, the following result holds:
\begin{lemma}\label{lem2-05-10-2020}
Let $(w,w_0,\tilde{\phi}^k)$ be a $C^3$ solution to \eqref{eq4-02-09-2020}, then $(\phi^1,\phi^k)$ with $\phi^1$ defined by \eqref{eq2-05-10-2020} and $\phi^k = \phit^k$ is the $C^2$ solution to \eqref{eq1-02-09-2020}. 
\end{lemma}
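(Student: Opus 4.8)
The plan is to argue by uniqueness, exactly in the spirit of Lemma~\ref{lem1-05-10-2020}: I will show that the pair $(\phi^1,\phi^k)$ with $\phi^1=\del_tw+w_0$ and $\phi^k=\tilde\phi^k$ solves the Cauchy problem \eqref{eq1-02-09-2020}, and then invoke local uniqueness for that system to conclude it is \emph{the} $C^2$ solution (only this direction is asserted, so no converse is needed). First I would record that the regularity is automatic: $(w,w_0,\tilde\phi^k)\in C^3$ forces $\phi^1=\del_tw+w_0\in C^2$ and $\phi^k=\tilde\phi^k\in C^2$. Next I would check the equations. The Klein--Gordon lines require nothing, since $\del_t\phi^1=\del_t(\del_tw+w_0)$ and $\phi^k=\tilde\phi^k$ turn the third line of \eqref{eq4-02-09-2020} into the second line of \eqref{eq1-02-09-2020} verbatim, the functionals $S_{KG}^k$ in the two systems being one and the same under the substitution $\phi^1=\del_tw+w_0$, $\phi^k=\tilde\phi^k$. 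For the wave line I would commute $\Box$ with $\del_t$ (valid since $w\in C^3$) and substitute the first two lines of \eqref{eq4-02-09-2020}, using $\del_t\Box w=-\del_t\sum_k|\tilde\phi^k|^2=-2\sum_k\phi^k\del_t\phi^k$ and $\Box w_0=S_W[\del_tw+w_0,\tilde\phi^k]=S_W[\phi]$, to obtain
\[
\Box\phi^1=\del_t\Box w+\Box w_0=-2\sum_{k=2}^n\phi^k\del_t\phi^k+S_W[\phi],
\]
which is the first line of \eqref{eq1-02-09-2020}.

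Then I would match the initial data. From \eqref{eq5-02-09-2020}, both $w$ and $\del_tw$ vanish identically on $\{t=2\}$, so $\Delta w(2,\cdot)=0$, and the equation $\Box w=-\sum_k|\tilde\phi^k|^2$ with the convention $\Box=\del_t^2-\sum_a\del_a^2$ yields $\del_t^2w(2,\cdot)=\Box w(2,\cdot)=-\sum_k|\phi^k_0|^2$. Hence $\phi^1(2,\cdot)=\del_tw(2,\cdot)+w_0(2,\cdot)=u_0$ and
\[
\del_t\phi^1(2,\cdot)=\del_t^2w(2,\cdot)+\del_tw_0(2,\cdot)=-\sum_k|\phi^k_0|^2+u_1+\sum_k|\tilde\phi^k_0|^2=u_1,
\]
using $\tilde\phi^k_0=\phi^k_0$; under the identification $u_0=\phi^1_0$, $u_1=\phi^1_1$ these are exactly the data prescribed in \eqref{eq1-02-09-2020}, while the data for $\phi^k=\tilde\phi^k$ agree trivially. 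With the Cauchy problem \eqref{eq1-02-09-2020} thus verified, and since it is a semilinear wave--Klein--Gordon system whose nonlinearities are smooth functions of $(\phi,\del\phi)$, I would close by invoking its local uniqueness (Theorem~2.2 of Section~1.2 of \cite{Sogge-2008-book}, as was done in the proof of Lemma~\ref{lem1-06-10-2020}).

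This lemma is not hard; the points that need care are all bookkeeping. The hypothesis is $C^3$ rather than $C^2$ precisely because one differentiates $\Box w=-\sum_k|\tilde\phi^k|^2$ once in $t$, which costs a derivative; one must stay consistent with the sign convention $\Box=\del_t^2-\sum_a\del_a^2$, notably when extracting $\del_t^2w$ from the wave equation on $\{t=2\}$; and one must make sure that $S_W$ and $S_{KG}^k$ as they appear in \eqref{eq1-02-09-2020} and in \eqref{eq4-02-09-2020} are genuinely the same functionals once $\phi^1=\del_tw+w_0$, $\phi^k=\tilde\phi^k$ is substituted, so that no term is created or lost. The only conceivable real obstacle would be a failure of local uniqueness for \eqref{eq1-02-09-2020}, but the smoothness and the merely first-order dependence of the nonlinearities on derivatives rule this out.
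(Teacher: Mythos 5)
Your proof is correct and follows exactly the route the paper intends: the lemma is stated as ``parallel to Lemma~\ref{lem1-05-10-2020}'', and your verification (commuting $\del_t$ with $\Box$ to get the wave equation for $\phi^1=\del_tw+w_0$, matching the Klein--Gordon lines and the shifted initial data, then invoking local uniqueness as in Lemma~\ref{lem1-06-10-2020}) is precisely that parallel argument, including the correct use of the convention $\Box=\del_t^2-\sum_a\del_a^2$ to recover $\del_t\phi^1(2,\cdot)=u_1$.
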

\begin{remark}\label{rk-1-12-10-2020}
Compare \eqref{eq4-02-09-2020} with \eqref{eq-main}, the main difference is that $w_0$ is no longer a solution to free linear wave equation. However it is not so far from that because the right-hand-side of equation of $w_0$ in \eqref{eq4-02-09-2020} is {\sl cubic}. Another important difference is that in \eqref{eq4-02-09-2020}, $w_0$ is coupled only with its gradient. More precisely, the term $vw_0$ does not exits in Klein-Gordon equations. Although it is not necessary, this structure will simplify a lot our argument. For example we need to the bound the conformal energy of $w_0$, which was necessary in Section \ref{sec-bootstrap} in order to bound the $L^2$ and pointwise bounds of $|w_0|_{p}$. 
\end{remark}

Then we establish the following result:
\begin{theorem}\label{thm-wave-map}
Suppose that $\phi^j_i$, $i=1,2$ and $j=1,\cdots, n$ are compactly supported in $\{|x|>1\}$. Then there is a integer $N\geq 7$ and positive constant $\vep_0>0$ determined by the system and $N$, such that $\forall\, 0\leq \vep\leq \vep_0$, if
\begin{equation}\label{eq3-05-10-2020}
\|\phi^j_0\|_{H^{N+1}}\leq \vep,\quad \|\phi_1^j\|_{H^N}\leq \vep,\quad j=1,2,\cdots n,
\end{equation}
then the local solution to the Cauchy problem associated with \eqref{eq1-02-09-2020} together with the initial data \eqref{eq3-05-10-2020} extends to time infinity.
\end{theorem}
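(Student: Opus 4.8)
The strategy is to pass, via Lemma~\ref{lem2-05-10-2020}, to the auxiliary system \eqref{eq4-02-09-2020}, and then to reproduce the bootstrap scheme of Section~\ref{sec-bootstrap} with the modifications forced by the new structure. Finite speed of propagation confines the solution to $\Kcal$ and lets us transfer the data to $\Hcal_2$; on a hyperbolic interval $[2,s_1]$ I would bootstrap
$$
\max\Big\{\sum_\alpha\Ecal_0^N(s,\del_\alpha w)^{1/2},\ \Ecal_0^N(s,w)^{1/2},\ \Ecal_0^N(s,w_0)^{1/2},\ \max_k\Ecal_{0,1}^N(s,\tilde{\phi}^k)^{1/2}\Big\}\leq C_1\vep s^\delta
$$
with $C_1>C_0$ and $\delta$ small. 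Klainerman--Sobolev then yields the first-generation pointwise bounds on $\del w$, $\del\del w$, $\tilde{\phi}^k$, $\del\tilde{\phi}^k$ and $\del w_0$, exactly as in \eqref{eq1-08-10-2020}--\eqref{eq11-15-08-2020}. Two features distinguish \eqref{eq4-02-09-2020} from \eqref{eq-main}: $w_0$ now solves $\Box w_0=S_W$ with $S_W$ at least cubic, and the Klein--Gordon equations carry the extra higher-order terms $S_{KG}^k$ of \eqref{eq18-07-10-2020}.

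Since $\Box w_0$ is cubic and, by \eqref{eq16-07-10-2020}, every monomial of $S_W$ carries at least two Klein--Gordon factors (each decaying like $t^{-1}$, which together with the top-order weighted bound on the Hessian of $w$ beats the single power of $(t/s)$ needed to unweight it), one gets $\|S_W\|_{L^2(\Hcal_s)}\lesssim(C_1\vep)^3 s^{-2+C\delta}$ at order $N$; Propositions~\ref{prop 1 energy} and \ref{prop-conformal} then give $\Ecal_0^N(s,w_0)^{1/2}\lesssim C_1\vep$ and $\Ecal_2^N(s,w_0)^{1/2}\lesssim C_1\vep s^{C\delta}$, whence \eqref{eq2-10-06-2020} and \eqref{eq4-10-06-2020} yield $|\del w_0|_{N-2}\lesssim C_1\vep(s/t)^{-1}s^{-2+C\delta}$, i.e.\ integrable modulo the conical factor --- the rate needed later. (Because the Klein--Gordon equations see only $\del w_0$, no pointwise control of $w_0$ itself is required, which lightens this step relative to Section~\ref{sec-bootstrap}; cf.\ Remark~\ref{rk-1-12-10-2020}.) From here the argument follows Section~\ref{sec-bootstrap} closely: Proposition~\ref{prop1-14-08-2020} applied to $\Box w=-\sum_k|\tilde{\phi}^k|^2$ gives the improved Hessian bounds \eqref{eq12-15-08-2020}--\eqref{eq13-15-08-2020}; Proposition~\ref{prop1-fast-kg} gives the conical decay \eqref{eq16-15-08-2020}--\eqref{eq17-15-08-2020} once one checks term by term that the right-hand sides of the $\tilde{\phi}^k$-equations are $\lesssim C_1\vep\,|\tilde{\phi}|_{N-3}$ pointwise (use \eqref{eq1-08-10-2020} for $\tilde{\phi}^k\del\del_t w$, the standard-energy decay of $\del w_0$ for $\tilde{\phi}^k\del w_0$, and the spare Klein--Gordon factor for $S_{KG}^k$); and a Gronwall argument then produces the slowly growing lower-order bound $\Ecal_{0,c}^{N-2}(s,\tilde{\phi}^k)^{1/2}\lesssim C_1\vep\,s^{CC_1\vep}$ together with \eqref{eq10-01-09-2020}.

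The sharp decay bounds come next. Writing each Klein--Gordon equation in the form \eqref{eq'14-01-09-2020}, with the potential perturbation $\omega$ absorbing the Hessian term $\del\del_t w$ and the source $f$ absorbing $\tilde{\phi}^k\del_t w_0$ and $S_{KG}^k$, one obtains $|\del\omega|\lesssim C_1\vep(s/t)^{-1}s^{-2+C\delta}$, $|\dels\omega|\lesssim C_1\vep(s/t)s^{-2+C\delta}$ and $|f|\lesssim(C_1\vep)^2(s/t)^2 s^{-2+C\delta}$ as in \eqref{eq2-09-09-2020}, and Proposition~\ref{lem1'-01-09-2020} yields $(s/t)|\del\tilde{\phi}^k|+|\tilde{\phi}^k|\lesssim C_1\vep(s/t)^2 s^{-1}$. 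For the Hessian of $w$, apply Proposition~\ref{prpo2 wave-sharp} to $\Box\del_\alpha w=-2\sum_k\tilde{\phi}^k\del_\alpha\tilde{\phi}^k$: the decisive point, exactly as for \eqref{eq13-01-09-2020}, is that $S^w[\del_\alpha w]\lesssim(C_1\vep)^2(s/t)^2 t^{-1}$ inherits the conical factor $(s/t)^2$ from the sharp Klein--Gordon decay, making it comparable to the weight $P(t,r)\gtrsim(s/t)^2 t^{-1}$, so no logarithm is lost and $|\del\del w|\lesssim C_1\vep s^{-1}$. Feeding these sharp bounds back into Propositions~\ref{prop 1 energy} and \ref{prop-conformal} improves every bootstrap energy below $\tfrac12 C_1\vep s^\delta$ under smallness conditions analogous to \eqref{eq12-09-10-2020}, closing the argument; the positive-curvature system \eqref{eq2b-04-10-2020} is treated identically up to signs.

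The principal difficulty is the one already met for \eqref{eq-main}: producing the sharp decay $|\del\del w|\lesssim C_1\vep s^{-1}$ with no uniform energy bound on the Klein--Gordon sector, which rests on the divergence structure of the quadratic terms (so the dangerous object is a Hessian, not a gradient) together with the sharp conical decay $|\tilde{\phi}^k|\lesssim(s/t)^2 s^{-1}$ that supplies the $(s/t)^2$ weight in $S^w[\del_\alpha w]$. The one genuinely new item of bookkeeping is the control of the cubic and higher-order terms: after the substitution $\phi^1=\del_t w+w_0$ (so each $\del\phi^1$ becomes a Hessian of $w$ plus a gradient of $w_0$), every such term is --- by the discussion around \eqref{eq18-07-10-2020} --- either a null form, in which case \eqref{eq13-10-06-2020} supplies a $(s/t)^2$ or $\dels$ gain, or carries at least two Klein--Gordon factors, in which case it carries two $(s/t)^2$ weights; either gain comfortably absorbs the single power of $(t/s)$ lost when a top-order Hessian of $w$ can be estimated only in the $(s/t)$-weighted energy. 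The coefficients $\del^q\Gamma(\varphi_S,0)$ being universal constants (Lemma~2.5 of \cite{Ab-2019}) reduces this to a finite verification.
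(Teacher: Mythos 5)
Your overall scheme is the paper's: pass to the auxiliary system \eqref{eq4-02-09-2020} via Lemma \ref{lem2-05-10-2020}, bootstrap hyperboloidal energies with $s^{\delta}$ growth, exploit the Hessian structure of $w$, control $S_W$, $S^k_{KG}$ through the null forms / double Klein--Gordon factors, and close via the sharp decay $|\del\del w|\lesssim C_1\vep s^{-1}$ obtained from Proposition \ref{prpo2 wave-sharp} with the crucial $(s/t)^2$ weight in $S^w$. Where you genuinely diverge is the sharp Klein--Gordon decay: you rerun the model-system argument and invoke the $L^{\infty}$--$L^{\infty}$ estimate (Proposition \ref{lem1'-01-09-2020}), whereas the paper never uses that proposition in Section \ref{sec-wave-maps-proof}. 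The point the paper exploits is that, unlike \eqref{eq-main}, the wave-map auxiliary system has no $vw_0$-type term and, after the conformal-energy step for $w_0$, every Klein--Gordon source is $L^2$-integrable in $s$; hence the lower-order Klein--Gordon energy is \emph{uniformly} bounded (\eqref{eq14-08-10-2020}), and the clean decay $t|\phit^k|_{N-3}+s|\del\phit^k|_{N-3}\lesssim C_1\vep$ and $|\phit^k|_{N-4}\lesssim C_1\vep(s/t)^2s^{-1}$ follow from Klainerman--Sobolev plus Proposition \ref{prop1-fast-kg} (\eqref{eq16-08-10-2020}, \eqref{eq6-18-10-2020}). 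The paper's route is simpler at exactly the place where your route needs the most care. (Cosmetic differences: the paper bootstraps $\Ecal_0^N(s,w_0)^{1/2}\leq C_1\vep$ uniformly rather than with $s^{\delta}$; and your early claim that every monomial of $S_W$ has two Klein--Gordon factors is inaccurate for the cubic null terms $\phit^k\m(d\phi^1,d\phi^1)$, though you state the correct dichotomy later.)

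One quantitative step in your version needs repair. You place $\phit^k\del_t w_0$ in the source $f$ of \eqref{eq'14-01-09-2020} and assert $|f|\lesssim (C_1\vep)^2(s/t)^2 s^{-2+C\delta}$; with that bound the ray integral $\int\lambda|f|\,d\lambda$ grows like $(s/t)^2s^{C\delta}$, so Proposition \ref{lem1'-01-09-2020} only yields $|\phit^k|\lesssim(s/t)^2 s^{-1+C\delta}$, not the clean $(s/t)^2 s^{-1}$ you then use. That $\delta$-loss is not harmless: it propagates to $S^w[\del_{\alpha}w]\lesssim(s/t)^2t^{-1}s^{C\delta}$, which is no longer dominated by $P\gtrsim\frac14(s/t)^2t^{-1}$, and the resulting loss in $|\del\del w|$ feeds a non-integrable $\tau^{-1+C\delta}$ factor into the top-order Klein--Gordon Gronwall argument, producing growth of the form $\exp\bigl(C(C_1\vep)^3 s^{C\delta}\bigr)$ which defeats the $s^{\delta}$ bootstrap for large $s$. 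The fix is already in your hands: either absorb $2\del_t w_0$ into the potential $\omega$ (exactly as the paper does for the model system in \eqref{eq7-08-09-2020}, using the Hessian-type bounds on $w_0$ analogous to \eqref{eq5-09-10-2020}, \eqref{eq10-09-10-2020} to control $\del\omega$, $\dels\omega$), or keep it in $f$ but estimate the product with the conical Klein--Gordon decay together with your own improved bound $|\del w_0|_{N-2}\lesssim C_1\vep(s/t)^{-1}s^{-2+C\delta}$, giving $|f|\lesssim(C_1\vep)^2(s/t)s^{-3+C\delta}$, for which $\int\lambda|f|\,d\lambda$ converges and the clean sharp decay follows. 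With that correction (or by switching to the paper's uniform-energy route), your argument closes.
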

\begin{remark}
This regularity $N\geq 7$ is to guarantee that for $A,B$ sufficiently regular functions, 
$$
|AB|_p\leq |A|_p|B|_{N-4} + |A|_{N-4}|B|_p.
$$
\end{remark}

\begin{remark}
The restriction $N\geq 7$ is not optimal. As we will see in the proof, because the auxiliary system is {\bf subcritical} in the sens of principle decay, this regularity can probably be improved. However in the regime of Lemma \ref{lem2-05-10-2020} there is a limit. Regarding Lemma \ref{lem2-05-10-2020} and the auxiliary system \eqref{eq4-02-09-2020}, we need to guarantee the $C^3$ regularity of $w$ and $C^2$ regularity of $(w_0,\phit^k)$. So we need $H^4$ regularity on $\phi_1^j$ and $H^5$ regularity on $\phi^j_0$.
\end{remark}

\section{Proof of Theorem \ref{thm-wave-map}}
\label{sec-wave-maps-proof}
\subsection{Bootstrap assumption and direct bounds}
We establish this global stability result via \eqref{eq4-02-09-2020}.  This is quite similar to the Proof in Section \ref{sec-bootstrap}. In fact there is a one-to-one correspondence between the subsections here to those in Section \ref{sec-bootstrap}, except the Subsection \ref{subsec-high-order} in which we treat the high-order terms. There are also other modifications among which the most important is the bound on $w_0$. In this case one only demands a uniform bound on standard energy but not on conformal energy. The reason is explained in Remark \ref{rk-1-12-10-2020}. 

To get started, let
$$
\max\Big\{\sum_{\alpha}\Ecal_0^N(2,\del_{\alpha}w)^{1/2}, \Ecal_0^N(2,w)^{1/2},\sum_{k=2}^n\Ecal_{0,1}^N(2,\phit^k)^{1/2}, \Ecal_0^N(2,w_0)^{1/2}\Big\}= C_0\vep.
$$
Then we make the following bootstrap bound on $[2,s_1]$: 
\begin{equation}\label{eq4-05-10-2020}
\max\Big\{\sum_{\alpha}\Ecal_0^N(s,\del_{\alpha}w)^{1/2}, \Ecal_0^N(s,w)^{1/2},\sum_{k=2}^n\Ecal_{0,1}^N(s,\phit^k)^{1/2}\Big\}\leq C_1\vep s^{\delta}.
\end{equation}
Suppose furthermore that
\begin{equation}\label{eq6-05-10-2020}
\Ecal_0^N(s,w_0)^{1/2}\leq C_1\vep . 
\end{equation}
Here $1/100\leq \delta\leq 1/20$.  We will prove the following {\sl improved energy bounds} on the same time interval:
\begin{equation}\label{eq7-05-10-2020}
\max\Big\{\sum_a\Ecal_0^N(s,\del_{\alpha}w)^{1/2}, \Ecal_0^N(s,w)^{1/2},\sum_{k=2}^n\Ecal_{0,1}^N(s,\phit^k)^{1/2}\Big\}\leq \frac{1}{2}C_1\vep s^{\delta}.
\end{equation}
\begin{equation}\label{eq8-05-10-2020}
\Ecal_0^N(s,w_0)^{1/2}\leq \frac{1}{2}C_1\vep. 
\end{equation}

By \eqref{eq3-10-06-2020}, the following decay are guaranteed by \eqref{eq4-05-10-2020}:
\begin{equation}\label{eq2-06-10-2020}
s|\del\del w|_{N-2} + t|\del\dels w|_{N-2}\leq CC_1\vep s^{\delta},
\end{equation}
\begin{equation}\label{eq5-05-10-2020}
s|\del w|_{N-2} + t|\dels w|_{N-2}\leq CC_1\vep s^{\delta}
\end{equation}
and
\begin{equation}\label{eq9-05-10-2020}
s|\del \phit^k|_{N-2} + t|\dels \phit^k|_{N-2} + t|\phit^k|_{N-2}\leq CC_1\vep s^{\delta}
\end{equation}
which leads to
\begin{equation}\label{eq10-05-10-2020}
t|\del \phit^k|_{N-3} + t^2|\dels \phit^k|_{N-3}\leq CC_1\vep s^{\delta}.
\end{equation}

By \eqref{eq7-10-06-2020} combined with \eqref{eq4-05-10-2020},
\begin{equation}\label{eq8-09-10-2020}
st|\del\dels w|_{N-3} + t^2|\dels\dels w|_{N-3}\leq CC_1\vep s^{\delta}.
\end{equation}
This leads to the following bound. For $|I|+|J|\leq N-3$, 
$$
|\del_r\delu_a\del^IL^J w|\leq CC_1\vep (s/t)s^{-2+\delta} \leq CC_1\vep (t-r)^{-1/2+\delta/2}t^{-3/2+\delta/2}.
$$
For a fixed $(t,x)\in \Hcal_{[2,s_1]}$, integrate this inequality along the segment $\{(t,\lambda x/|x|),|x|\leq \lambda\leq t-1\}$ and remark that $\delu_a\del^IL^J$ vanishes on $\del\Kcal = \{r=t-1\}$, we obtain:
$$
\aligned
|\delu_a\del^IL^J w(t,x)|\leq& \int_{|x|}^{t-1}|\del_r\delu_a\del^IL^J u|(t,\lambda x/|x|)d\lambda 
\\
\leq& CC_1\vep (t-r)^{1/2+\delta/2}t^{-3/2+\delta/2}
\leq CC_1\vep (s/t)^2s^{-1+\delta}.
\endaligned
$$
This leads to
\begin{equation}\label{eq4-07-10-2020}
|\dels w|_{N-3}\leq CC_1\vep (s/t)^2s^{-1+\delta}.
\end{equation}

Furthermore, by \eqref{eq6-05-10-2020} and \eqref{eq4-10-06-2020}
\begin{equation}\label{eq1-07-10-2020}
s|\del w_0|_{N-2} + t|\dels w_0|_{N-2} \leq CC_1\vep.
\end{equation}
Similar to \eqref{eq4-07-10-2020}, the following bound holds for $w_0$
\begin{equation}\label{eq10-08-10-2020}
|\dels w_0|_{N-3}\leq CC_1\vep(s/t)^2s^{-1}.
\end{equation}
Furthermore
\begin{equation}\label{eq10-09-10-2020}
|\del\dels w_0|_{N-3}\leq CC_1\vep (s/t)s^{-2}.
\end{equation}

Recall \eqref{eq2-05-10-2020}, the following bounds hold:
\begin{equation}\label{eq9-07-10-2020}
\|(s/t)|\del \phi^1|_N\|_{L^2(\Hcal_s)} + \||\dels \phi^1|_N\|_{L^2(\Hcal_s)}\leq CC_1\vep s^{\delta}.
\end{equation}
\begin{equation}\label{eq2-07-10-2020}
|\del \phi^1|_{N-2}\leq CC_1\vep s^{-1+\delta},
\end{equation}
\begin{equation}\label{eq3-07-10-2020}
|\dels \phi^1|_{N-2}\leq CC_1\vep (s/t)s^{-1+\delta}.
\end{equation}
Combining\eqref{eq8-09-10-2020} and \eqref{eq10-08-10-2020}, we obtain 
\begin{equation}\label{eq11-08-10-2020}
|\dels \phi^1|_{N-3}\leq CC_1\vep (s/t)s^{-2+\delta} + CC_1\vep (s/t)^2s^{-1} \leq CC_1\vep(s/t)^2s^{-1+\delta}.
\end{equation}

\subsection{Bounds on Hessian form of $w$}
In this subsection we establish the following bound:
\begin{equation}\label{eq5-07-10-2020}
(s/t)^2|\del\del w|_{N-3}\leq CC_1\vep (s/t)s^{-2+2\delta},
\end{equation}
\begin{equation}\label{eq6-07-10-2020}
\|(s/t)^2s|\del\del w|_{N-1}\|_{L^2(\Hcal_s)}\leq CC_1\vep s^{2\delta}.
\end{equation}
These are exactly the same to \eqref{eq12-15-08-2020} and \eqref{eq13-15-08-2020}. We establish bounds parallel to \eqref{eq2-08-10-2020} and \eqref{eq3-08-10-2020}. To do so, remark that
$$
\aligned
&|(\phit^k)^2|_{N-2}\leq C(C_1\vep)^2(s/t)^2s^{-2+2\delta},
\\
&\|(s/t)^{-1}|(\phit^k)^2|_N\|_{L^2(\Hcal_s)}\leq C(C_1\vep)^2s^{-1+2\delta}
\endaligned
$$
where \eqref{eq9-05-10-2020} is applied.

Recall the relation \eqref{eq2-05-10-2020}, a direct consequence of \eqref{eq5-07-10-2020} is
\begin{equation}\label{eq7-07-10-2020}
|\del\phi^1|_{N-3}\leq CC_1\vep(s/t)^{-1}s^{-2+2\delta} + CC_1\vep s^{-1}.
\end{equation}

\subsection{Bounds on $S_W[\phi]$ and $S_{KG}^k[\phi]$ and bounds on $w_0$}
\label{subsec-high-order}
This subsection is devoted to the high-order terms. We establish their bounds and the give two direct bounds on $w_0$.
\paragraph*{$L^2$ Bounds on higher-order terms.}
We firstly establish the following $L^2$ bounds:
\begin{equation}\label{eq1-06-10-2020}
\|(s/t)^{-1}|S_W[\phi^1,\phit^k]|_N\|_{L^2(\Hcal_s)} + \|(s/t)^{-1}|S_{KG}^k[\phi^1,\phit^k]|_N\|_{L^2(\Hcal_s)} \leq C(C_1\vep)^3s^{-2+3\delta}.
\end{equation}
$S_W, S_{KG}^k$ vanish in third order with respect to their argument.  Recall Lemma 2.5 of \cite{Ab-2019} and the remark made after (5.2) therein, the coefficients $\del\Gamma^k(\varphi_S,0)$ can be regarded as universal constants. Remark that in $S_W$ and $S_{KG}^k$, the cubic terms are linear combinations of
\begin{equation}\label{eq10-07-10-2020}
\aligned
&\phit^k \m^{\alpha\beta}\del_{\alpha}\phi^1\del_{\beta}\phi^1,\quad 
\phit^k \m^{\alpha\beta} \del_{\alpha}\phi^1\del_{\beta}\phit^j,\quad
\phit^k \del\phit^j\del \phit^l,
\\
&\phit^k\phit^j\del\phi^1,\quad
\phit^k\phit^j\del \phit^l,
\\
&\phit^j\phit^k\phit^l
\endaligned
\end{equation}
where $\m$ is the Minkowski metric. So the first two terms enjoy a null structure. The rest contains at least two Klein-Gordon factors.  We make the following estimates. First, by \eqref{eq13-10-06-2020} and the null condition of $\m^{\alpha\beta}, $
\begin{equation}\label{eq14-07-10-2020}
\aligned
|\m^{\alpha\beta}\del_{\alpha}\phi^1\del_{\beta}\phi^1|_{N-3}
\leq& C(s/t)^2|\del \phi^1|_{N-3}^2 + C|\dels \phi^1|_{N-3}|\del \phi^1|_{N-3}
\\
\leq& C(C_1\vep)^2(s/t)^2s^{-2+2\delta}
\endaligned
\end{equation}
where \eqref{eq2-07-10-2020} and \eqref{eq11-08-10-2020} are applied.
$$
\aligned
&\||\m^{\alpha\beta}\del_{\alpha}\phi^1\del_{\beta}\phi^1|_N\|_{L^2(\Hcal_s)}
\\
\leq& C\|(s/t)^2|\del \phi^1|_{N-3}|\del\phi^1|_N\|_{L^2(\Hcal_s)}
\\
    &+ C\||\dels \phi^1|_{N-3}|\del \phi^1|_{N}\|_{L^2(\Hcal_s)}
     + C\||\dels \phi^1|_{N}|\del \phi^1|_{N-3}\|_{L^2(\Hcal_s)}
\\
\leq& CC_1\vep s^{-1+\delta}\|(s/t)^2|\del \phi^1|_N\|_{L^2(\Hcal_s)}
\\ 
    &+ CC_1\vep s^{-1+\delta}\|(s/t)^2|\del \phi^1|_N\|_{L^2(\Hcal_s)}
     + CC_1\vep s^{-1+\delta}\||\dels \phi^1|_N\|_{L^2(\Hcal_s)}
\endaligned
$$
where \eqref{eq2-07-10-2020} and \eqref{eq11-08-10-2020} are applied. Then we conclude that
\begin{equation}\label{eq11-07-10-2020}
\||\m^{\alpha\beta}\del_{\alpha}\phi^1\del_{\beta}\phi^1|\|_{L^2(\Hcal_s)}\leq C(C_1\vep)^2s^{-1+2\delta}.
\end{equation}
Similarly,
\begin{equation}\label{eq12-07-10-2020}
|\m^{\alpha\beta}\del_{\alpha}\phi^1\del_{\beta}\phit^j|_{N-3}\leq C(C_1\vep)^2 (s/t)^2s^{-2+2\delta},
\end{equation}
\begin{equation}\label{eq13-07-10-2020}
\||\m^{\alpha\beta}\del_{\alpha}\phi^1\del_{\beta}\phit^j|_N\|_{L^2(\Hcal_s)}\leq C(C_1\vep)^2s^{-1+2\delta}.
\end{equation}

Then the first term in \eqref{eq10-07-10-2020} is bounded as following:
\begin{equation}\label{eq19-07-10-2020}
\aligned
&\|(s/t)^{-1}|\phit^k \m^{\alpha\beta}\del_{\alpha}\phi^1\del_{\beta}\phi^1|_N\|_{L^2(\Hcal_s)}
\\
\leq& \|(s/t)^{-1}|\phit^k|_{N-2}|\m^{\alpha\beta}\del_{\alpha}\phi^1\del_{\beta}\phi^1|_N\|_{L^2(\Hcal_s)}
\\
 &+ \|(s/t)^{-1}|\phit^k|_N|\m^{\alpha\beta}\del_{\alpha}\phi^1\del_{\beta}\phi^1|_{N-3}\|_{L^2(\Hcal_s)}
\\
\leq& CC_1\vep s^{-1+\delta}\||\m^{\alpha\beta}\del_{\alpha}\phi^1\del_{\beta}\phi^1|_N\|_{L^2(\Hcal_S)}
+ C(C_1\vep)^2 s^{-2+2\delta}\||\phit^k|_N\|_{L^2(\Hcal_s)}
\\
\leq& C(C_1\vep)^3 s^{-2+3\delta}
\endaligned
\end{equation}
where \eqref{eq14-07-10-2020} and \eqref{eq11-07-10-2020} are applied.

The second term in \eqref{eq10-07-10-2020} is bounded similarly with \eqref{eq12-07-10-2020} and \eqref{eq13-07-10-2020}:
\begin{equation}\label{eq20-07-10-2020}
\aligned
\|(s/t)^{-1}|\phit^k\m^{\alpha\beta}\del_{\alpha}\phit^j\del_{\beta}\phi^1|_N\|_{L^2(\Hcal_s)}
\leq&C(C_1\vep)^3s^{-2+3\delta}.
\endaligned
\end{equation}

The rest terms in \eqref{eq10-07-10-2020} contain at least two Klein-Gordon factors, which permits us to obtain sufficient $L^2$ bounds. We only write the bound on $\phit^k\phit^j\del\phi^1$ (which is the most critical one) and omit the rests.
\begin{equation}\label{eq17-07-10-2020}
\aligned
&\|(s/t)^{-1}|\phit^k\phit^j\del\phit^1|_N \|_{L^2(\Hcal_s)}
\\
\leq& \|(s/t)^{-1}|\phit^k|_{N-2}|\phit^j|_{N-2}|\del\phi^1|_N\|_{L^2(\Hcal_s)} 
\\
&+ \|(s/t)^{-1}|\phit^k|_{N-2}|\phit^j|_N|\del\phi^1|_{N-2}\|_{L^2(\Hcal_s)}
\\
    &+ \|(s/t)^{-1}|\phit^k|_N|\phit^j|_{N-2}|\del\phi^1|_{N-2}\|_{L^2(\Hcal_s)}
\\
\leq& C(C_1\vep)^2s^{-2+2\delta}\|(s/t)|\del\phi^1|_N\|_{L^2(\Hcal_s)}
     +C(C_1\vep)^2s^{-2+2\delta}\||\phit^j|_N\|_{L^2(\Hcal_s)}
\\
    &+C(C_1\vep)^2s^{-2+2\delta}\||\phit^j|_N\|_{L^2(\Hcal_s)}
\\
\leq& C(C_1\vep)^3 s^{-2+3\delta}.
\endaligned
\end{equation}

For forth and higher order terms, recall \eqref{eq16-07-10-2020}. There is at least two Klein-Gordon factor. So they are bounded similar to \eqref{eq17-07-10-2020}, we omit the detail.

Then, summarize \eqref{eq19-07-10-2020}, \eqref{eq20-07-10-2020}, \eqref{eq17-07-10-2020} and the above discussion, \eqref{eq1-06-10-2020} is concluded.

\paragraph*{Pointwise bounds on higher-order terms.}
We establish the following bounds:
\begin{equation}\label{eq12-08-10-2020}
|S_W[\phi^1,\phit^k]|_{N-3} + |S_{KG}^k[\phi^1,\phit^k]|_{N-3}\leq C(C_1\vep)^3(s/t)^2s^{-3+3\delta}.
\end{equation}
This also relies on \eqref{eq10-07-10-2020}. The first two null cubic forms are bounded via \eqref{eq14-07-10-2020} and \eqref{eq12-07-10-2020} combined with \eqref{eq9-05-10-2020}. The rest terms together with forth and higher-order terms, containing at least two Klein-Gordon factor  (among these the worst is $\phit^k\phit^j\del\phi^1$), are bounded directly by applying \eqref{eq9-05-10-2020}, \eqref{eq2-07-10-2020} and \eqref{eq11-08-10-2020}. 

\paragraph*{Improving the energy bounds on $w_0$.}
We apply directly Proposition \ref{prop 1 energy} on 
\begin{equation}\label{eq5-18-10-2020}
\Box \del^IL^J w_0 = \del^IL^J(S_W[\phi^1,\phit^k])
\end{equation}
for $|I|+|J|\leq N$. By \eqref{ineq 3 prop 1 energy}, we obtain, thanks to \eqref{eq1-06-10-2020},
$$
E_0(2,\del^IL^J w_0)^{1/2}\leq E_0(s,\del^IL^J w_0)^{1/2} + C(C_1\vep)^3\int_2^s\tau^{-2+3\delta}d\tau
$$
which leads to,
\begin{equation}
E_0(2,\del^IL^J w_0)^{1/2}\leq E_0(s,\del^IL^J w_0)^{1/2} + C(C_1\vep)^3. 
\end{equation}
Then we conclude that
\begin{equation}\label{eq26-07-10-2020}
\Ecal_0(s,w_0)^{1/2}\leq C_0\vep + C(C_1\vep)^3.
\end{equation} 

\paragraph*{Bounds on Hessian forms of $w_0$.}
Similar to the component $w$, we will establish:
\begin{equation}\label{eq5-09-10-2020}
(s/t)^2|\del\del w_0|_{N-3}\leq CC_1\vep (s/t)s^{-2}.
\end{equation}
This is also relied on Proposition \ref{prop1-14-08-2020}. Recall \eqref{eq12-08-10-2020} and \eqref{eq2 lem Hessian-flat-zero}, one has
$$
\aligned
(s/t)^2|\del\del w_0|_{N-3}\leq& Ct^{-1}|\del w_0|_{N-2} + C|\Box w_0|_{N-3}
\\
\leq& CC_1\vep(s/t)s^{-2} + C(C_1\vep)^3 (s/t)^2s^{-3+3\delta}.
\endaligned
$$
which leads to \eqref{eq5-09-10-2020}.

\paragraph*{Conformal energy  bound on $w_0$}
\begin{equation}\label{eq2-18-10-2020}
\Ecal_2^N(s,w_0)^{1/2}\leq CC_0\vep + C(C_1\vep)^3s^{3\delta}.
\end{equation}
We only need to apply Proposition \ref{prop-conformal} on \eqref{eq5-18-10-2020} for $|I|+|J|\leq N$. Recall \eqref{eq1-06-10-2020}, \eqref{eq2-18-10-2020} is concluded. 

Recalling \eqref{eq2-10-06-2020} and \eqref{eq4-10-06-2020}, we obtain the following bounds:
\begin{equation}\label{eq3-18-10-2020}
\|(s/t)^2s|\del w_0|_N\|_{L^2(\Hcal_s)}\leq CC_1\vep s^{3\delta},
\end{equation} 
\begin{equation}\label{eq4-18-10-2020}
|\del u|_{N-2}\leq CC_1\vep (s/t)^{-1}s^{-2+3\delta}.
\end{equation}

\subsection{Conical decay of Klein-Gordon component}
Parallel to \eqref{eq16-15-08-2020} and \eqref{eq17-15-08-2020}, we establish the following two bounds on $\phit^k$:
\begin{equation}\label{eq25-07-10-2020}
|\phit^k|_{N-3}\leq CC_1\vep (s/t)^2s^{-1+\delta},
\end{equation}
\begin{equation}\label{eq13-08-10-2020}
\|(s/t)^{-1}|\phit^k|_{N-1}\|_{L^2(\Hcal_s)}\leq CC_1\vep s^{\delta}.
 \end{equation}
To do se we apply Proposition \ref{prop1-fast-kg}. Then we need to bound the right-hand-side of the equation of $\phit^k$. The higher-order terms are bounded by \eqref{eq12-08-10-2020} and \eqref{eq1-06-10-2020}. The quadratic terms are bounded exactly as in Subsection \ref{subsec-model-conical}, because $(\phi^1,\phit^k)$ and $(w,v)$ satisfies the same bounds respectively. Then we conclude that
\begin{equation}\label{eq17-08-10-2020}
|\Box \phit^k + c^2\phit^k|_{N-3}\leq CC_1\vep |\phit^k|_{N-3} + C(C_1\vep)^3(s/t)^2s^{-3+3\delta},
\end{equation}
\begin{equation}\label{eq18-08-10-2020}
\|(s/t)^{-1}|\Box \phit^k + c^2\phit^k|_{N-1}\|_{L^2(\Hcal_s)}\leq  C(C_1\vep)^2s^{\delta} + CC_1\vep\|(s/t)^{-1}|\phit^k|_{N-1}\|_{L^2(\Hcal_s)}.
\end{equation}
Then following the argument in Subsection \ref{subsec-model-conical}, \eqref{eq25-07-10-2020} and \eqref{eq13-08-10-2020} are established. Here we also need the smallness condition on $C_1\vep$ as \eqref{eq7-08-10-2020}:
\begin{equation}\label{eq20-09-10-2020}
CC_1\vep\leq \frac{c^2}{2}.
\end{equation}
  
\subsection{Lower order energy bounds on Klein-Gordon components}
This is parallel to Subsection \ref{subsec-model-KG-lower}. We establish 
\begin{equation}\label{eq14-08-10-2020}
\sum_{k=2}^n\Ecal_{0,1}^{N-1}(s,\phit^k)^{1/2}\leq C_0\vep + C(C_1\vep)^2.
\end{equation}
The higher-order terms $S_{KG}^k[\phi]$ are bounded by \eqref{eq1-06-10-2020}. Furthermore, 
$$
\aligned
&\||\phit^j\del w_0|_{N-1}\|_{L^2(\Hcal_s)}
\\
\leq& CC_1\vep s^{-2+3\delta}\|(s/t)^{-1}|\phit^j|_{N-1}\|_{L^2(\Hcal_s)}
+ CC_1\vep s^{-2+2\delta}\|(s/t)^2s|\del w_0|_{N-1}\|_{L^2(\Hcal_s)}
\\
\leq& C(C_1\vep)^2 s^{-2+4\delta}
\endaligned
$$
where \eqref{eq4-18-10-2020}, \eqref{eq25-07-10-2020} are applied for the first inequality and \eqref{eq3-18-10-2020}, \eqref{eq13-08-10-2020} are applied for the second.
$$
\aligned
&\||\phit^j\del\del w|_{N-1}\|_{L^2(\Hcal_s)}
\\
\leq& 
C\||\phit^j|_{N-1}|\del\del w|_{N-3}\|_{L^2(\Hcal_s)} + C\||\del\del w|_{N-1}|\phit^j|_{N-3}\|_{L^2(\Hcal_s)}
\\
\leq& CC_1\vep s^{-2+2\delta}\|(s/t)^{-1}|\phit^j|_{N-1}\|_{L^2(\Hcal_s)}
+ CC_1\vep s^{-2+\delta}\|(s/t)^2s|\del\del w|_{N-1}\|_{L^2(\Hcal_s)}
\\
\leq&C(C_1\vep)^2 s^{-2+3\delta}.
\endaligned
$$
Here we have applied \eqref{eq5-07-10-2020} and \eqref{eq25-07-10-2020} for the second inequality, and \eqref{eq13-08-10-2020}, \eqref{eq6-07-10-2020} for the third inequality. These bounds are integrable, so we conclude by \eqref{eq14-08-10-2020}.

A direct result of \eqref{eq14-08-10-2020} is the following sharp bound in $\phit$:
\begin{equation}\label{eq16-08-10-2020}
s|\del \phit^k|_{N-3} + t|\phit^k|_{N-3} \leq CC_1\vep. 
\end{equation}
\subsection{Sharp decay bounds}
Now we are ready to establish the following sharp bounds:
\begin{equation}\label{eq15b-08-10-2020}
|\del\del w|_{N-4}\leq CC_1\vep s^{-1}.
\end{equation}

The proof is quite similar to that of \eqref{eq18-01-09-2020}. We remark that following the notation in Proposition \ref{prpo2 wave-sharp}, for $|I|+|J|\leq N-4$, 
$$
|S^w[\del_{\alpha}\del^IL^J u]|\leq CC_1\vep st^{-2}|\del w|_{N-2}
$$
which leads to
\begin{equation}\label{eq8-18-10-2020}
|\Delta^w[\del_{\alpha}\del^IL^J u]|\leq CC_1\vep t^{-2+\delta}.
\end{equation}
Furthermore, for $S^w[\del_{\alpha}\del^IL^J w]$, we need the following bound on $|\phit^k|$:
\begin{equation}\label{eq6-18-10-2020}
|\phit^k|_{N-4} \leq CC_1\vep (s/t)^2s^{-1}.
\end{equation}
This is proved as following. Recall Proposition \ref{prop1-fast-kg}, \eqref{eq16-08-10-2020} and \eqref{eq17-08-10-2020},
$$
\aligned
c^2|\phit^k|_{N-4}\leq& C(s/t)^2|\del \phit^k|_{N-3} + C|\Box \phit^k + c^2\phit^k|_{N-4}
\\
\leq& CC_1\vep(s/t)^2s^{-1} + CC_1\vep|\phit^k|_{N-4} + C(C_1\vep)^3(s/t)^2s^{-3+3\delta}.
\endaligned
$$
when $CC_1\vep\leq \frac{c^2}{2}$, \eqref{eq6-18-10-2020} is concluded. 

Then recall \eqref{eq16-08-10-2020} and above bound \eqref{eq6-18-10-2020},
\begin{equation}\label{eq7-18-10-2020}
|S^w[\del_{\alpha}\del^IL^J w]|\leq C|\phit^k|_{N-4}|\del \phit^k|_{N-4}\leq C|\phit^k|_{N-4}|\phit^k|_{N-3}\leq C(C_1\vep)^2(s/t)^2t^{-1}.
\end{equation}

Now we apply \eqref{eq1-29-05-2020} on 
$$
\Box \del^IL^J w = -\sum_{k=2}^n\del^IL^J \big(|\phit^k|^2\big).
$$
Substitute the above bounds \eqref{eq8-18-10-2020}, \eqref{eq7-18-10-2020} into \eqref{eq1-29-05-2020} and consider a point $(\bar{t},\bar{x})\in \Hcal_{\bar{s}}$,
$$
\aligned
&\bar{s}|\del_t\del_{\alpha} w|(\bar{t},\bar{x})
\\
\leq& CC_0\vep 
+ C(C_1\vep)^2\int_2^{\bar{t}} (s/t)^2 t^{-1}\big|_{\gamma(t;\bar{t},\bar{x})} e^{-\int_{t}^{\bar{t}}P_{\bar{t},\bar{x}}(\eta)d\eta} + CC_1\vep \int_2^{\bar{t}} t^{-2+\delta}dt
\\
\leq& CC_1\vep + C(C_1\vep)^2\int_2^{\bar{t}}(s/t)^2t^{-1}\big|_{\gamma(t;\bar{t},\bar{x})}e^{-\int_{t}^{\bar{t}}(s/t)^2t^{-1}\big|_{\gamma(\eta;\bar{t},\bar{x})}d\eta}dt
\\
\leq& CC_1\vep. 
\endaligned
$$
So we conclude that
\begin{equation}\label{eq9-18-10-2020}
|\del_t\del_{\alpha}\del^IL^J w|\leq CC_1\vep s^{-1}.
\end{equation}
Similar to the argument applied for \eqref{eq13-01-09-2020}, we conclude by \eqref{eq15b-08-10-2020}.

%
%


\subsection{Improved energy bounds and conclusion}
For \eqref{eq7-05-10-2020}, we follow a similar argument in Subsection \ref{subsec-model-improved}. Thanks to \eqref{eq16-08-10-2020},
\begin{equation}\label{eq9-09-10-2020}
\aligned
&\Ecal_0^p(s,w)^{1/2} + \sum_{\alpha}\Ecal_0^p(s,\del_{\alpha} w)^{1/2} 
\\
\leq& \Ecal_0^p(2,w)^{1/2} + \sum_{\alpha}\Ecal_0^p(2,\del_{\alpha} w)^{1/2}
+ CC_1\vep \sum_{k=2}^n\int_2^s\tau^{-1}\Ecal_{0,1}^p(\tau, \phit^k)^{1/2}d\tau.
\endaligned
\end{equation}
This is due to the following bound combined with Proposition \ref{prop 1 energy}:
$$
\||(\phit^k)^2|_p\|_{L^2(\Hcal_s)} + \||(\phit^k\del\phit^k)|_p\|_{L^2(\Hcal_s)}
\leq CC_1\vep s^{-1}\sum_{k=2}^n\Ecal_{0,1}^p(s,\phit^k)^{1/2}.
$$

The bounds on $\phit^k$ is similar. First, one has integrable $L^2$ bounds \eqref{eq1-06-10-2020} on higher-order terms $S_{KG}^k[\phi]$. Second, the term $vw_0$ does not appear. So we make the following bounds:
\begin{equation}\label{eq13-09-10-2020}
\||\phit^k\del w_0|_p\|_{L^2(\Hcal_s)}\leq CC_1\vep s^{-1}\sum_{k=2}^n\Ecal_{0,1}^p(s,\phit^k)^{1/2} + C(C_1\vep)^2s^{-2+4\delta}.
\end{equation} 
\begin{equation}\label{eq14-09-10-2020}
\aligned
\||\phit^k\del\del w|_p\|_{L^2(\Hcal_s)}\leq& CC_1\vep s^{-1}\Big(\sum_{k=2}^n\Ecal_{0,1}^p(s,\phit^k)^{1/2} 
+ \sum_{\alpha}\Ecal_0^p(s,\del_{\alpha}w)^{1/2}\Big).
\endaligned
\end{equation}

For \eqref{eq13-09-10-2020}, remark that
$$
\aligned
\||\phit^k\del w_0|_p\|_{L^2(\Hcal_s)}\leq& C\||\phit^k|_p|\del w_0|_{N-2}\|_{L^2(\Hcal_s)} + C\||\phit^k|_{N-4}|\del w_0|_p\|_{L^2(\Hcal_s)}
\\
\leq& CC_1\vep s^{-1}\||\phit^k|_p\|_{L^2(\Hcal_s)} 
+ CC_1\vep s^{-2 + \delta}\|(s/t)^2s|\del w_0|_p\|_{L^2(\Hcal_s)}
\\
\leq&CC_1\vep s^{-1} \sum_{k=2}^n\Ecal_{0,1}^p(s,\phit^k)^{1/2} + C(C_1\vep)^2 s^{-2 + 4\delta}
\endaligned
$$
where \eqref{eq1-07-10-2020}, \eqref{eq25-07-10-2020} and \eqref{eq3-18-10-2020} are applied.

For \eqref{eq14-09-10-2020}
$$
\aligned
&\||\phit^k\del\del w|_p\|_{L^2(\Hcal_s)}
\\
\leq& C\||\phit^k|_p|\del\del w|_{N-4}\|_{L^2(\Hcal_s)} 
+ C\||\phit^k|_{N-3}|\del\del w|_p\|_{L^2(\Hcal_s)}
\\
\leq& CC_1\vep s^{-1}\||\phit^k|_p\|_{L^2(\Hcal_s)} 
  + CC_1\vep s^{-1}\|(s/t)|\del\del w|_p\|_{L^2(\Hcal_s)}
\\
\leq& CC_1\vep s^{-1}\Big(\sum_{k=2}^n\Ecal_{0,1}^p(s,\phit^k)^{1/2} 
+ \sum_{\alpha}\Ecal_0^p(s,\del_{\alpha}w)^{1/2}\Big).
\endaligned
$$
Here for the second inequality, \eqref{eq16-08-10-2020} and \eqref{eq15b-08-10-2020} are applied. 

Now recall \eqref{eq1-06-10-2020} and substitute all these $L^2$ bounds into Proposition \ref{prop 1 energy}, we obtain:
\begin{equation}
\aligned
\sum_{k=2}^n\Ecal_{0,1}(s,\phit^k)^{1/2}\leq& \sum_{k=2}^n\Ecal_{0,1}(2,\phit^k)^{1/2} 
+ C(C_1\vep)^2 
\\
& + CC_1\vep \int_2^s\tau^{-1}\Big(\sum_{k=2}^n\Ecal_{0,1}^p(\tau,\phit^k)^{1/2} + \sum_{\alpha}\Ecal_0^p(s,\del_{\alpha}w)^{1/2}\Big) d\tau.
\endaligned
\end{equation}

Again, let 
$$
A^p(s) := \max\Big\{\sum_\alpha\Ecal_0^p(s,\del_{\alpha}w)^{1/2}, \Ecal_0^p(s,w)^{1/2},\sum_{k=2}^n\Ecal_{0,1}^p(s,\phit^k)^{1/2}\Big\}.
$$
Then for $0\leq p\leq N$,
\begin{equation}\label{eq16-09-10-2020}
\aligned
A^p(s)\leq& C_0\vep + C(C_1\vep)^2
 + CC_1\vep\int_2^s\tau^{-1}A^p(\tau)d\tau 
\endaligned
\end{equation}
So we conclude, thanks to Gronwall's inequality,  the energy bounds by 
\begin{equation}\label{eq19-09-10-2020}
A^{N}(s)\leq \big(C_0\vep  + C(C_1\vep)^2\big)s^{CC_1\vep}.
\end{equation}
\subsection{Conclusion of the bootstrap argument}
Now we are ready to improve the bootstrap bounds. \eqref{eq6-05-10-2020} is improved by \eqref{eq26-07-10-2020}. More precisely, if we take 
\begin{equation}\label{eq22a-09-10-2020}
C_1\geq 2C_0,\quad \vep\leq \sqrt{\frac{C_1-2C_0}{2CC_1^3}},
\end{equation}
then \eqref{eq26-07-10-2020} leads to \eqref{eq8-05-10-2020}. Furthermore, taking
\begin{equation}\label{eq22b-09-10-2020}
\vep\leq \frac{c^2}{2CC_1}
\end{equation}
in order to guarantee \eqref{eq20-09-10-2020}. Then taking
\begin{equation}\label{eq22c-09-10-2020}
\vep\leq \frac{C_1 - 2C_0}{2CC_1^{3/2}}, \quad \vep \leq \delta/CC_1,
\end{equation}
\eqref{eq7-05-10-2020} is guaranteed. Then taking $\vep_0$ to be the minimum of the above five quantity, the desired stability result is established.
\appendix

\bibliographystyle{elsarticle-num}
\bibliography{WKGm-bibtex}
\end{document}